\numberwithin{equation}{section}
\newtheorem{thm}{Theorem}[section]
\newtheorem{prop}[thm]{Proposition}
\newtheorem{lem}[thm]{Lemma}
\theoremstyle{definition}
\newtheorem{defn}[thm]{Definition}
\newtheorem{ex}[thm]{Example}
\theoremstyle{remark}
\newcommand{\Rr}{\mathbb R}
\renewcommand{\d}{\mathrm d}                              
\newcommand{\br}[1]{\left[#1\right]}                      
\newcommand{\pbr}[1]{\left\{#1\right\}}                   
\newcommand{\pbrEm}{\left\{\cdot,\cdot\right\}}           
\newcommand{\lie}{\mathcal{L}}                            
\newcommand{\pd}[2]{\frac{\partial #1}{\partial #2}}      
\newcommand{\X}{\ensuremath{\mathfrak{X}}}                
\newcommand{\Ha}{\ensuremath{\mathcal{H}}}                
\newcommand{\Ree}{\ensuremath{\mathcal{R}}}               
\newcommand{\g}{\ensuremath{\mathfrak{g}}}                
\DeclareMathOperator{\Ad}{Ad}                             
\DeclareMathOperator{\hor}{hor}                           
\begin{document}

\title{Reduction of symplectic principal $\Rr$-bundles}

\author{Ignazio Lacirasella}
\address{I.\ Lacirasella: 
Dipartimento di Matematica, Universit\`a degli Studi Aldo Moro di Bari, Bari, Italy}
\email{ignazio.lacirasella@uniba.it}

\author{Juan Carlos Marrero}
\address{J.\ C.\ Marrero:
Unidad asociada ULL-CSIC {\it Geometr{\'\i}a Diferencial y Mec\'anica Geom\'etrica.} Departamento de Matem\'atica Fundamental, Universidad de la Laguna, 38071 La Laguna, Tenerife
Canary Islands, Spain} \email{jcmarrer@ull.es}

\author{Edith Padr{\'o}n}
\address{E.\ Padr{\'o}n: 
Unidad asociada ULL-CSIC {\it Geometr{\'\i}a Diferencial y Mec\'anica Geom\'etrica.} Departamento de Matem\'atica Fundamental, Universidad de la Laguna, 38071 La Laguna, Tenerife
Canary Islands, Spain} \email{mepadron@ull.es}

\begin{abstract}
 {We describe a reduction process for symplectic principal $\Rr$-bundles in the presence of a momentum map. 
This type of structures plays an important role in the geometric formulation of non-autonomous Hamiltonian systems. 
We apply this procedure to the standard symplectic principal $\Rr$-bundle associated with a fibration $\pi:M\to\Rr$. 
Moreover, we show a reduction process for non-autonomous Hamiltonian systems on symplectic principal $\Rr$-bundles. We apply these reduction processes to several examples.}
\end{abstract}

\thanks{\noindent \emph{Acknowledgments:} The research has been 
partially supported by MEC (Spain) grants MTM2009-13383,
MTM2010-12116-E, and the Projects of the Canary government
SOLSUBC200801000238 and ProdID20100210. 
The authors  would like to thank the referees for their very valuable comments and suggestions which have  improved our article. 
L.~would like to thank the University of 
Bari and, in particular, Prof.~Pastore for the support.}

\maketitle

\noindent {\bf Mathematics Subject Classification (2010):} 37C60, 53D05, 53D20, 55R10.

\noindent {\bf Keywords:} Poisson, symplectic and cosymplectic reduction, momentum maps, principal $\Rr$-bundles, 
symplectic principal $\Rr$-bundles, non-autonomous Hamiltonian systems

\section{Introduction}

It is well-known that the configuration space  for a non-autonomous 
mechanical system is a smooth manifold which is fibered on the real line. 
So, we have a fibration $\pi:M\to\Rr$, with respect to which the restricted 
phase space of momenta is the dual bundle $V^*\pi$ of the vertical bundle 
$V\pi$ of $\pi$ and the extended phase space of momenta is the cotangent 
bundle $T^*M$ of $M$.

In the presence of a Hamiltonian section (that is, a section of the 
canonical projection $\mu_{\pi}:T^*M\to V^*\pi$) and using the canonical 
symplectic structure of $T^*M$ (respectively, a suitable cosymplectic 
structure on $V^*\pi$)  one may develop the extended (respectively, the restricted) Hamiltonian formalism 
(see Section \ref{section2} and \cite{GrabGrabUrbAV, IgMaPaSo, LeMaMa}). 

We remark that $\mu_\pi:T^*M\to V^*\pi$ is  a principal $\Rr$-bundle  (an {\it AV-bundle } in the terminology of \cite{GrabGrabUrbAV}). In addition, the principal action is symplectic 
and, thus, we have a symplectic principal $\Rr$-bundle. 
A non-autonomous Hamiltonian system is a symplectic principal $\Rr$-bundle $\mu:A\to V$ 
and a Hamiltonian  section $h:V\to A$, that is, a section of the principal 
$\Rr$-bundle projection $\mu$. The Hamiltonian section $h$ induces a vector 
field on $V$ whose integral curves are the solutions of the dynamical equations 
for the Hamiltonian system (see Section \ref{section5.1}). In the particular case 
when $\mu:A\to V$ is a standard symplectic principal $\Rr$-bundle (that is, $\mu=\mu_{\pi}$ 
for some fibration $\pi:M\to\Rr$), we obtain the classical Hamilton equations. 
Moreover, all the tools used in the standard theory in geometric non-autonomous 
mechanics, as Lagrangian and Hamiltonian formalisms or variational formulation, 
appear in the framework of the AV-bundles 
(see \cite{GrabGrabUrbAV, GrGrUrbEL, GrUrVariational, GrUr}).

On the other hand, in the context of autonomous mechanical systems, the phase space 
is represented by a symplectic manifold. A classical procedure, due to Marsden and 
Weinstein (\cite{MarsWein}), called \emph{symplectic reduction}, allows 
to use symmetry pro\-per\-ties of a symplectic manifold in order 
to reduce the degrees of freedom of the system. Moreover, if an invariant Hamiltonian function 
is given on the unreduced symplectic manifold, one may obtain a Hamiltonian system on 
the reduced symplectic manifold.

Reduction theory may be also applied, 
for example, in order to obtain the symplectic structure on the coadjoint orbit of 
a Lie group $G$ (see \cite{AbrMarsd}). In the particular case when the unreduced 
symplectic manifold is the cotangent bundle of a manifold endowed with its 
canonical symplectic structure, a natural question arises: is the reduced symplectic 
manifold a standard symplectic manifold (that is, a cotangent bundle endowed with 
its canonical symplectic structure)? 
An answer to this question is given by the so-called {\it cotangent bundle reduction theory} 
(see \cite{Ku,MarsEtAl,MaMoRa,OrRa}). 
 
Although reduction theory goes back to the early roots of mechanics, 
it allows to obtain many results about other geometric structures. 
Indeed, a similar idea may be used in order to reduce not only Poisson structures (\cite{MarsRat}), 
but also cosymplectic, K\"ahler, hyperk\"ahler, contact, $f$-structures, etc.~
and to obtain new examples of such a kind of manifolds (\cite{Alb,DiTKo,Ge,GuSt,HiKaLiRo}).

{The aim of this paper is to perform the reduction process in 
the framework of symplectic principal $\Rr$-bundles. We introduce the notion of 
symmetry of a symplectic principal $\Rr$-bundle and show that, 
under suitable regularity conditions, one 
may obtain a reduced symplectic principal $\Rr$-bundle. We apply this reduction process to the standard symplectic principal $\Rr$-bundle associated with a fibration.  Finally, we prove that a non-autonomous Hamiltonian system with equivariant Hamiltonian section induces a non-autonomous Hamiltonian system on the reduced symplectic principal $\Rr$-bundle.

The paper is structured as follows. In Section 2, we recall some basic facts about non-autonomous Hamiltonian 
systems which motivate the study of symplectic AV-differential geometry. In Section 3 we introduce the category of 
symplectic principal $\Rr$-bundles and prove that the base manifold $V$ of a symplectic 
principal $\Rr$-bundle is canonically equipped with a Poisson structure. Moreover, we will relate the induced Poisson structures on the corresponding base spaces of an embedding of symplectic principal $\Rr$-bundles. 
In order to introduce a reduction process for symplectic principal $\Rr$-bundles, in Section 4, we define the notion of a \emph{canonical action} on a principal $\Rr$-bundle and prove 
the reduction theorem in the symplectic principal $\Rr$-bundle framework. As an example, in the last part of this section, we discuss the reduction of a standard symplectic principal $\Rr$-bundle $\mu_{\pi}:T^*M\to V^*\pi$ associated with a fibration $\pi:M\to\Rr$ which is invariant with respect to a free and proper action of a Lie group $G$ on $M$. In Section 5, we develop the reduction of a non-autonomous Hamiltonian system on a symplectic principal $\Rr$-bundle. For this purpose, we use symplectic and cosymplectic reduction theory. 
In Sections 4 and 5 we apply the reduction processes to the case of the bidimensional time-dependent damped harmonic oscillator and the time-dependent heavy top. Finally, in Section \ref{Section6}, we show how apply these reductions  to the frame-independent formulation of the analytical mechanics in the Newtonian space-time. 

The paper ends with our conclusions, a description of future research directions and an Appendix in which we review some reduction processes for  Poisson, symplectic and cosymplectic manifolds. }

\section{A motivation: non-autonomous Hamiltonian systems}\label{section2}

It is well-known that if a manifold $Q$ is the configuration space of an autonomous Hamiltonian system, 
then $T^*Q$ is the phase space of momenta. Moreover, using the canonical symplectic structure of $T^*Q$, 
one may describe the Hamilton equations in an intrinsic form (see, for instance, \cite{AbrMarsd}). 

For non-autonomous Hamiltonian systems the situation is different (see, for instance, 
\cite{GrabGrabUrbAV,IgMaPaSo,LeMaMa}). Namely, \emph{the configuration space} is a manifold $M$ fibered 
over the real line. So, we have a surjective submersion $\pi:M\to\Rr$. We will denote by $V\pi$ the 
vertical bundle of $\pi$ which is a vector bundle over $M$. Then, \emph{the restricted} (respectively, 
\emph{extended}) \emph{phase space of momenta} is the dual bundle $V^*\pi$ of $V\pi$ 
(respectively, the cotangent bundle $T^*M$ of $M$). 

 We remark that $T^*M$ is a principal $\Rr$-bundle over $V^*\pi$ and the dual map $\mu_\pi:T^*M\to V^*\pi$ of the inclusion $i:V\pi\to TM$
is the principal bundle projection. 
The corresponding principal action $\psi_\pi:\Rr\times T^*M\to T^*M$ is defined by
\[
 \psi_\pi(s,\alpha_x)=\alpha_x+s\pi^*(\d t)(x), \qquad\text{for $s\in\Rr$ and $\alpha_x\in T^*_xM,$}
\]
where $t$ is the usual coordinate on $\Rr$. 
 Note that the principal action $\psi_\pi$ is symplectic with  respect to the symplectic structure on $T^*M.$ Moreover,  the infinitesimal generator $Z_{\mu_\pi}$ of $\psi_\pi$ 
is the Hamiltonian vector field $$Z_{\mu_{\pi}}={\mathcal H}_{ -\pi\circ\pi_M}\in {\mathfrak X}(T^*M)$$ of the real function on $T^*M$ given by 
$-\pi\circ\pi_M:T^*M\to\Rr,$
where $\pi_M:T^*M\to M$ is the canonical projection. One proves easily that
\begin{equation}\label{muprojectable}
\mbox{$f\in C^\infty(T^*M)$ is $\mu_{\pi}$-projectable if and only if $Z_{\mu_{\pi}}(f)=0.$}
\end{equation}

On the other hand, the extended phase of momenta $T^*M$ admits a linear Poisson structure $\pbrEm_{T^*M}$ induced by the canonical symplectic $2$-form $\Omega_M$ on $T^*M$. Moreover, using (\ref{muprojectable}) and the Jacobi identity of $\{\cdot,\cdot\}_{T^*M},$ one deduces  that the subset 
 $\mu_\pi^*(C^\infty(V^*\pi))$ of $C^\infty(T^*M)$ is closed with respect to $\pbrEm_{T^*M}$. Therefore, there is a unique Poisson structure $\pbrEm_{V^*\pi}$ on $V^*\pi$ such that  (see \cite{Vais})
 \begin{equation}\label{linearPoissonVpi}
 \pbr{f\circ\mu_\pi,h\circ\mu_\pi}_{T^*M}=\pbr{f,h}_{V^*\pi}\circ\mu_\pi,\qquad f,h\in C^\infty(V^*\pi).
\end{equation}
Notice that $\pbrEm_{V^*\pi}$ is also linear and  $\mu_\pi:T^*M\to V^*\pi$ is, by construction, a Poisson epimorphism.

In this setting, \emph{a Hamiltonian section} is a section $h:V^*\pi\to T^*M$ of $\mu_\pi$. Using the Hamiltonian section 
one may define a cosymplectic structure $(\omega_h,\eta)$ on $V^*\pi$ as follows 
\begin{equation}\label{OmhAndEtaTilde}
 \omega_h=h^*(\Omega_M), \qquad \eta=\pi_{V^*\pi}^*(\pi^*(\d t))
\end{equation}
with $\pi_{V^*\pi}:V^*\pi\to M$ the corresponding projection (for the definition of a cosymplectic structure, see Appendix \ref{MWRed}). In fact, the $1$-form $\eta$ given by \eqref{OmhAndEtaTilde} is just  
$\eta=-h^*(i_{Z_{\mu_\pi}}\Omega_M)$ and $\omega_h$ is well known  Poincar\'e-Cartan $2$-form.

On the other hand, since 
 $\mu_\pi((h\circ\mu_\pi)(\alpha_x))=\mu_\pi(\alpha_x),$
there exists a unique $F_h(\alpha_x)\in\Rr$ such that 
\[
 \psi_\pi(-F_h(\alpha_x),\alpha_x)=h(\mu_\pi(\alpha_x)).
\]
The \emph{extended Hamiltonian function} associated with the Hamiltonian section $h$ is just the real $C^\infty$-function 
$F_h:T^*M\to\Rr$ and $(T^*M,\Omega,F_h)$ is the so-called  \emph{homogeneous Hamiltonian system}. It's easy to prove that the Hamiltonian vector field $\Ha_{F_h}$ of $F_h$ is $\mu_\pi$-projectable on the 
Reeb vector field $R_h$ of the cosymplectic structure $(\omega_h,\eta)$.

In what follows, we will give the local expressions of these elements. Firstly, from the fact that  $\pi:M\to \Rr$ is a submersion, one may consider   local coordinates $(t,q^i)$ on $M$ adapted to the submersion $\pi$   such that $\pi:M\to \Rr$ is the coordinate t. Denote by   $(t,p,q^i,p_i)$ (respect. $(t,q^i,p_i)$) the 
corresponding local coordinates on $T^*M$ (respect. on $V^*\pi$). With respect to them, we have that   
\[
   \pbr{t,q^i}_{V^*\pi}=\pbr{t,p_i}_{V^*\pi}=\pbr{q^i,q^j}_{V^*\pi}=\pbr{p_i,p_j}_{V^*\pi}=0, \quad
   \pbr{q^i,p_j}_{V^*\pi}=\delta^i_j,
\]
and 
\[
\psi_\pi(s, (t,p,q^i,p_i))=(t,s+p,q^i,p_i),\qquad \mu_\pi(t,p,q^i,p_i)=(t,q^i,p_i).
\]

If the local expression of a Hamiltonian section  $h:V^*\pi\to T^*M$ is given by 
\[
 h(t,q^i,p_i)=(t,-H(t,q,p),q^i,p_i),
\]
then, 
\[
 F_h(t,p,q^i,p_i)=p+H(t,q^i,p_i)
\]
and 
\[
 \omega_h = \d q^i\wedge\d p_i + \pd{H}{q^i}\d q^i\wedge\d t +\pd{H}{p_i}\d p_i\wedge\d t, \;\;\;\;\; \eta=\d t.
\]
Thus,  the Reeb vector field  $\Ree_h$ of the cosymplectic structure $(\omega_h,\eta)$ and the Hamiltonian vector field ${\mathcal H}_{F_h}$  have the following  local expressions 
\[
   \Ree_h  = \pd{}{t} + \pd{H}{p_i}\pd{}{q^i} - \pd{H}{q^i}\pd{}{p_i},\qquad 
   \Ha_{F_h} = \pd{}{t} - \pd{H}{t}\pd{}{p} + \pd{H}{p_i}\pd{}{q^i} - \pd{H}{q^i}\pd{}{p_i}.
\]
We remark that the integral curves of $\Ree_h$ are just \emph{the Hamilton equations} for $h$, 
\[
 \frac{\d q^i}{\d t} = \pd{H}{p_i}, \quad\quad \frac{\d p_i}{\d t} = - \pd{H}{q^i}, \qquad\mbox{for all }i.
\]

\section{The category of symplectic principal $\Rr$-bundles}

Motivated by the example of the above section, one may introduce the notion of a symplectic principal $\Rr$-bundle as follows. 

Let  $\mu:A\to V$  be a principal $\Rr$-bundle 
(a {\it AV-bundle} in the terminology  \cite{GrabGrabUrbAV}). We will denote by
\[
 \psi:\Rr \times A \to A,\qquad (s,a) \mapsto \psi_s(a),
\]
the corresponding principal action of the Lie group 
$(\Rr,+)$ on the manifold $A$.

In this case the vertical distribution of $\mu$ has 
dimension $1$ and it is generated by the infinitesimal 
generator $Z_\mu\in\X(A)$ whose flow is $\psi_s$.

\begin{defn}
We will say that $\mu:(A,\Omega)\to V$ is a 
\emph{symplectic principal $\Rr$-bundle}, if $\mu:A\to V$ is a
principal $\Rr$-bundle and $\Omega$ is a symplectic structure on $A$ such that 
the associated principal action 
$\psi:\Rr\times A\to A$ is symplectic.
\end{defn}

Note that the infinitesimal generator $Z_\mu$ 
of a symplectic principal $\Rr$-bundle $\mu:(A,\Omega)\to V$ is a locally  
Hamiltonian vector field.

\begin{ex}\label{StAVbundle}
 \emph{The standard symplectic principal $\Rr$-bundle 
associated with a fibration}. If $\pi:M\to\Rr$ is a surjective submersion then $T^*M$ is the total space of a symplectic principal $\Rr$-bundle 
over $V^*\pi$ (see Section \ref{section2}).
\hfill{$\diamond$}
\end{ex} 

\begin{ex}\label{StAVbundleB} \emph{The standard symplectic principal $\Rr$-bundle 
associated with a fibration and  a magnetic term.} Let $\pi:M\to \Rr$ be a surjective submersion with total space a manifold $M$ 
of dimension $n+1$ 
and $\beta$ a closed $2$-form on $M$. 
Consider the closed basic $2$-form (\emph{the magnetic term}) $B=\pi_M^*\beta$ on $T^*M$, where 
$\pi_M:T^*M\to M$ is the canonical projection. An easy computation shows 
that $B$ is invariant with respect to the $\Rr$-principal action 
of the standard symplectic principal $\Rr$-bundle $\mu_\pi:T^*M\to V^*\pi$. Thus, if 
$\Omega_M$ is the canonical symplectic form on $T^*M$, 
$\mu_\pi:(T^*M,\Omega_M-B)\to V^*\pi$ is a symplectic principal $\Rr$-bundle.

\hfill{$\diamond$}
\end{ex}
Now, we will prove a version of Darboux Theorem for a symplectic principal $\Rr$-bundle.
\begin{thm}\label{locExpr}
 Let $\mu:(A,\Omega)\to V$ be a symplectic principal $\Rr$-bundle with in\-fi\-ni\-te\-si\-mal 
generator $Z_\mu$. 
Suppose that $\dim A=2n+2$. Then, for any $a\in A$, there 
exist local coordinates $(t,p,q^i,p_i)$, ($i=1,\dots,n$) 
in a neighborhood $U$ of $a$ such that
\begin{itemize}
 \item[i)] the local expression of $\mu:A\to V$ is
           \begin{equation}\label{LocalExpressionMu}
             \mu(t,p,q^i,p_i)=(t,q^i,p_i),
           \end{equation}
 \item[ii)] $(t,p,q^i,p_i)$ are Darboux coordinates 
           for $\Omega$.
\end{itemize}
Moreover, the local expression of the infinitesimal 
generator is $Z_\mu=\pd{}{p}$.
\end{thm}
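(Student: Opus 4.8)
The plan is to build the desired coordinates by combining three facts: that $Z_\mu$ is a nowhere-vanishing, locally Hamiltonian vector field; that $Z_\mu$ generates a free proper $\Rr$-action along the fibers of $\mu$; and the classical Darboux theorem. First I would observe that since $\mathcal{L}_{Z_\mu}\Omega = 0$ and $\Omega$ is closed, the $1$-form $i_{Z_\mu}\Omega$ is closed, hence locally exact: on a contractible neighborhood of $a$ there is a function $p$ with $\d p = -i_{Z_\mu}\Omega$, i.e. $Z_\mu = \mathcal{H}_p$ is the Hamiltonian vector field of $p$. (The sign is chosen so that $p$ will play the role of the fiber coordinate with $Z_\mu = \partial/\partial p$.) Because $Z_\mu$ is nowhere zero, $\d p = -i_{Z_\mu}\Omega$ is nowhere zero, so $p$ has $a$ as a regular point and can be taken as one coordinate of a chart.

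Next I would produce a function $t$ conjugate to $p$. Since $\mu:A\to V$ is a principal $\Rr$-bundle, near $a$ it is trivial: there is a submersion onto an open set of $V$ whose fibers are the $\psi$-orbits, i.e. the integral curves of $Z_\mu$. Pulling back via $\mu$ any function on $V$ gives a function annihilated by $Z_\mu$; I want one, call it $t$, with $\{t,p\} = Z_\mu(t)\neq 0$ — wait, that vanishes. The right move instead is the standard Darboux/Carathéodory construction adapted to the distinguished vector field: complete $(p)$ to a Darboux chart using the fact that $Z_\mu = \mathcal{H}_p$ is the first half of a "conjugate pair". Concretely, pick any function $t_0$ near $a$ with $Z_\mu(t_0) = \{t_0,p\} = 1$ (possible since $Z_\mu\neq 0$ at $a$: take a hypersurface transverse to $Z_\mu$ through $a$ and let $t_0$ be the flow parameter of $\psi$ measured from it, so $t_0\circ\psi_s = t_0 + s$ hence $Z_\mu(t_0)=1$). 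Then $\{p,t_0\}=-1$, and on the common level set of $p$ and $t_0$ the restriction of $\Omega$ is a symplectic form in dimension $2n$; apply the ordinary Darboux theorem there to get coordinates $(q^i,p_i)$, and verify that $(t_0,p,q^i,p_i)$ is a Darboux chart for $\Omega$ on the whole neighborhood — this is exactly the inductive step in the proof of the relative/equivariant Darboux theorem, using that all of $q^i,p_i$ can be chosen $\psi$-invariant (i.e. Poisson-commuting with $p$) because they are transported along the flow of $Z_\mu=\mathcal{H}_p$.

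For (i), I then need the chart to be $\mu$-adapted: $\mu(t,p,q^i,p_i) = (t,q^i,p_i)$. Since the fibers of $\mu$ are precisely the orbits of $\psi$, i.e. the flow lines of $Z_\mu = \partial/\partial p$, two points of $U$ have the same $\mu$-image iff they differ only in the $p$-coordinate; hence $(t,q^i,p_i)$ are fiberwise constant and descend to coordinates on $\mu(U)\subseteq V$, giving (i) after shrinking $U$ so that $\mu(U)$ is a chart domain and $\mu|_U$ is the obvious projection. Finally $Z_\mu = \partial/\partial p$ holds by construction: $i_{\partial/\partial p}\Omega = i_{\partial/\partial p}(\d q^i\wedge\d p_i + \d t\wedge \d p) = -\d p = i_{Z_\mu}\Omega$, and nondegeneracy of $\Omega$ forces $Z_\mu=\partial/\partial p$.

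The main obstacle is the second step: ensuring the Darboux chart can be chosen simultaneously compatible with $\Omega$ \emph{and} with the vector field $Z_\mu$ (equivalently, with the fibration $\mu$) — i.e. that among all Darboux charts one can select one in which $Z_\mu$ is a coordinate vector field and the remaining $2n+1$ coordinates are $Z_\mu$-invariant. This is a Darboux-with-a-distinguished-Hamiltonian argument; the cleanest route is to do the Moser/induction proof of Darboux "relative to $Z_\mu$", i.e. first normalize the pair $(t_0,p)$, then work on the quotient of a local transversal by the flow, which inherits a symplectic form, apply Darboux there, and pull everything back. Equivariance of the construction under $\psi$ is what guarantees condition (i) comes for free rather than requiring a separate argument.
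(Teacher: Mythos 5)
Your strategy is the paper's (one coordinate from a local Hamiltonian of $Z_\mu$, one from a flow parameter, complete by Darboux on the rank-$2n$ remainder, read off $\mu$ from the coordinate lines of $Z_\mu$), but as written it contains a genuine error: you have swapped the roles of the two distinguished functions. The function you call $p$, defined by $\d p=-i_{Z_\mu}\Omega$, satisfies
\[
Z_\mu(p)=-\left(i_{Z_\mu}\Omega\right)(Z_\mu)=-\Omega(Z_\mu,Z_\mu)=0,
\]
so it is constant along the $\psi$-orbits, i.e.\ along the fibers of $\mu$; it cannot be ``the fiber coordinate with $Z_\mu=\partial/\partial p$'', which would force $Z_\mu(p)=1$. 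Indeed, in the target chart $\Omega=\d t\wedge\d p+\sum_i\d q^i\wedge\d p_i$ one has $i_{\partial/\partial p}\Omega=-\d t$, \emph{not} $-\d p$, so the local Hamiltonian of $Z_\mu$ is the base coordinate $t$, and your closing verification ``$i_{\partial/\partial p}(\d q^i\wedge\d p_i+\d t\wedge\d p)=-\d p$'' is this same mistake in computational form. The confusion also infects part (i): along a fiber of $\mu$ your $p$ stays constant while your flow parameter $t_0$ increases, so in your labelling points of the same fiber differ in $t_0$, not in $p$.

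The repair is purely notational. Call the local primitive of $-i_{Z_\mu}\Omega$ by the name $t$ (this is exactly the paper's choice of a function with $Z_\mu=-\Ha_t$), and call your transversal flow parameter $t_0$, which satisfies $Z_\mu(t_0)=1$, by the name $p$. With that relabelling your argument goes through and coincides with the paper's: the paper completes $(t,p)$ to a chart by applying the generalized Darboux theorem to the closed rank-$2n$ form $\Omega-\d t\wedge\d p$, whereas you perform the equivalent Carath\'eodory--Jacobi--Lie step on a joint level set of the two functions and extend invariantly; in either case $Z_\mu=\partial/\partial p$ then follows from nondegeneracy alone (no separate equivariance argument is needed), and (i) follows because the fibers of $\mu$ are the integral curves of $\partial/\partial p$.
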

\begin{proof}
The proof is based on the well-known construction of the Darboux coordinates (see, for instance,  \cite{Ar,LM}). Fix $a\in A$. Since the vector field $Z_\mu$ is locally 
Hamiltonian, there exists a local function $t$ such that $Z_\mu=-\Ha_t$. Choose a function
$p$ (eventually defined on a smaller open neighborhood of $a$) 
such that $Z_\mu(p)=1$. Using the generalized Darboux Theorem on the closed $2$-form $\Omega'=\Omega-\d t\wedge\d p$ of rank $2n$, one may complete $t,p$ to a coordinate system $(t,p,q^i,p_i)$ such that
\[
 \Omega=\d t\wedge\d p + \sum_i\d q^i\wedge\d p_i.
\]
It follows that $Z_\mu=-\Ha_t=\pd{}{p}$. Since $\mu$ is  the 
projection of $A$ on $A/\langle Z_\mu\rangle$, the local expression of 
$\mu$ is as in \eqref{LocalExpressionMu}. 
\end{proof}

We will say that $(t,p,q^i,p_i)$ in the previous theorem
are \emph{canonical coordinates} for the symplectic 
principal $\Rr$-bundle $\mu$.

If $\mu:(A,\Omega)\to V$ is a symplectic principal $\Rr$-bundle, 
then, using a well-known result on Poisson reduction (see for instance \cite{OrRa}, Theorem 10.1.1) we have that  the base manifold $V$ may be canonically equipped with a Poisson structure as we show in the following result.
 
\begin{prop}\label{PoissonInducedByMu}
 Let $\mu:(A,\Omega)\to V$ be a symplectic principal $\Rr$-bundle. 
Then, there exists a unique Poisson structure $\pbrEm_V$ 
on $V$ such that $\mu$ is a Poisson map, i.e.
\begin{equation}\label{muPoissonMap}
   \pbr{ f\circ\mu , f'\circ\mu }_A = 
   \pbr{ f , f' }_V \circ\mu,
    \qquad\mbox{for any }f,f'\in C^\infty(V),
\end{equation}
where $\pbrEm_A$ is the Poisson bracket on $A$ induced by the symplectic form $\Omega$.
\end{prop}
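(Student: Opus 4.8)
The plan is to realise $V$ as the Poisson quotient of $(A,\Omega)$ by the $\Rr$-action and to transport the bracket $\pbrEm_A$ down along $\mu$. Since $\mu:A\to V$ is a surjective submersion, the pullback $\mu^*:C^\infty(V)\to C^\infty(A)$ is injective; this already forces uniqueness, because \eqref{muPoissonMap} determines $\pbr{f,f'}_V\circ\mu$ and $\mu^*$ is injective. Hence the whole point is existence, i.e.\ the well-definedness of the right-hand side of \eqref{muPoissonMap}.

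First I would record the analogue of \eqref{muprojectable} in this generality: a function $F\in C^\infty(A)$ is $\mu$-projectable (that is, $F=\bar F\circ\mu$ for some $\bar F\in C^\infty(V)$, necessarily unique and smooth) if and only if $Z_\mu(F)=0$. Indeed the fibres of $\mu$ are exactly the orbits of the principal action $\psi$, and these are connected because $(\Rr,+)$ is connected; so $Z_\mu(F)=0$ forces $F$ to be constant along the fibres, and smoothness of $\bar F$ follows since $\mu$, being a principal bundle projection, admits local sections.

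The key step is to show that $\mu^*(C^\infty(V))$ is a Poisson subalgebra of $(C^\infty(A),\pbrEm_A)$. Because the principal action $\psi$ is symplectic, $\lie_{Z_\mu}\Omega=0$, so $Z_\mu$ is a Poisson vector field; equivalently it is a derivation of $\pbrEm_A$:
\[
 Z_\mu\pbr{h,h'}_A=\pbr{Z_\mu h,h'}_A+\pbr{h,Z_\mu h'}_A,\qquad h,h'\in C^\infty(A).
\]
Applying this with $h=f\circ\mu$, $h'=f'\circ\mu$ for $f,f'\in C^\infty(V)$, and using that $Z_\mu(f\circ\mu)=Z_\mu(f'\circ\mu)=0$ (these functions are constant along the orbits of $\psi$ since $\mu\circ\psi_s=\mu$), we obtain $Z_\mu\pbr{f\circ\mu,f'\circ\mu}_A=0$. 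By the previous paragraph $\pbr{f\circ\mu,f'\circ\mu}_A$ is $\mu$-projectable, so we may define $\pbr{f,f'}_V$ to be its projection, i.e.\ the assignment characterised by \eqref{muPoissonMap}. Alternatively, this is exactly the verification that the $\Rr$-action on $(A,\Omega)$ meets the hypotheses of the Poisson reduction theorem (\cite{OrRa}, Theorem 10.1.1).

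It remains to check that $\pbrEm_V$ is a Poisson bracket. Bilinearity and skew-symmetry are immediate from the corresponding properties of $\pbrEm_A$. The Leibniz rule and the Jacobi identity are obtained by transporting the same identities for $\pbrEm_A$ through the injective algebra homomorphism $\mu^*$; for instance, the Jacobi identity for $\pbrEm_V$, composed with $\mu$, becomes the cyclic sum $\sum\pbr{f\circ\mu,\pbr{f'\circ\mu,f''\circ\mu}_A}_A=0$, which holds in $A$. The only genuine obstacle in the argument is the closure statement of the third paragraph; the rest is bookkeeping once \eqref{muprojectable} is available in the required generality.
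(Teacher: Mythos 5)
Your proof is correct and follows essentially the same route as the paper, which simply invokes the Poisson reduction theorem (\cite{OrRa}, Theorem 10.1.1); you have just written out the standard verification of its hypotheses — that $\mu^*(C^\infty(V))$ is a Poisson subalgebra because the symplectic vector field $Z_\mu$ is a derivation of $\pbrEm_A$ killing pullbacks, together with the projectability criterion and uniqueness from injectivity of $\mu^*$. Your use of $\lie_{Z_\mu}\Omega=0$ in place of a global Hamiltonian for $Z_\mu$ (which exists only locally in general) is exactly the right adaptation of the argument the paper gives in Section \ref{section2} for the standard case.
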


Note that for canonical coordinates of $\mu$, if  $(t,q^i,p_i)$ are the induced coordinates on $V$,  the 
corresponding local expression of the 
Poisson bracket on $V$ with respect to these coordinates 
is the following one:
\[
   \pbr{t,q^i}_V=\pbr{t,p_i}_V=\pbr{q^i,q^j}_V=\pbr{p_i,p_j}_V=0, \qquad
   \pbr{q^i,p_j}_V=\delta^i_j.
\]


\begin{ex}
In the particular case when we have  an standard symplectic principal $\Rr$-bundle $\mu_\pi:(T^*M,\Omega_M)\to V^*\pi$ associated with a fibration $\pi:M\to \Rr,$ the Poisson structure is just the one described in (\ref{linearPoissonVpi}).

Additionaly, we suppose that we have a 
 closed $2$-form   $\beta$ on $M$. Denote by $B=\pi_M^*(\beta)\in \Omega^1(T^*M)$, where $\pi_M:T^*M\to M$ is the canonical projection,  and by 
\begin{itemize}
 \item $\Lambda_{T^*M}$ and $\Lambda_{T^*M}^B$ the Poisson 
       structures on $T^*M$ induced by the symplectic $2$-form $\Omega_M$ 
       and $\Omega_M-B$, respectively;
 \item $\Lambda_{V^*\pi}$ and $\Lambda_{V^*\pi}^B$ the Poisson 
       structures on $V^*\pi$ induced on the base space of the symplectic principal $\Rr$-bundles 
       $\mu_\pi:(T^*M,\Omega_M)\to V^*\pi$ and $\mu_\pi:(T^*M,\Omega_M-B)\to V^*\pi$, 
       respectively.
\end{itemize}

If $\pbrEm_{T^*M}$ and  $\pbrEm^B_{T^*M}$ are the Poisson brackets on $T^*M$
induced by the symplectic forms $\Omega_M$ and $\Omega_M-B$, respectively,
then, one may easily prove that 
\begin{equation}\label{DeformedPoissBrack1}
 \pbr{F, F'}^B_{T^*M} = \pbr{F, F'}_{T^*M} + B(\Ha_F, \Ha_{F'}),
\end{equation}
for any $F,F'\in C^\infty(T^*M)$, where $\Ha_F, \Ha_{F'}\in\X(T^*M)$ 
are the Hamiltonian vector fields of $F,F'$ with respect to the 
symplectic structure $\Omega_M$ (see \cite{MarsEtAl}).

Alternatively,  \eqref{DeformedPoissBrack1} may be written in terms of vertical lift of $\beta.$ We recall that, for a vector bundle $\tau:E\to Q$,  the vertical lift  $\gamma^v$ of a section $\gamma$ of $\wedge^pE\to Q$ 
 is a $p$-vector on $E$. In fact, if $(q^i)$ are local coordinates on an open subset $U$ of $Q$, $\{e_\alpha\}$ is a local basis of $\Gamma(E)$ and $\gamma=\gamma^{i_1\dots i_p}e_{i_1}\wedge \dots \wedge e_{i_p}\mbox{ on } U$
then
\[
 \gamma^v=\gamma^{i_1\dots i_p}\frac{\partial}{\partial y^{i_1}}\wedge \dots \wedge \frac{\partial}{\partial y^{i_p}}
\]
where $(q^i,y^\alpha)$ are the corresponding local coordinates on $E$. 
Then,  \eqref{DeformedPoissBrack1} may be rewritten as 
\begin{equation}\label{DeformedPoissBrack}
 \pbr{F, F'}^B_{T^*M} = \pbr{F, F'}_{T^*M} + \beta^v(\d F, \d F').
\end{equation} 
Indeed, it is sufficient to prove that if $F$ and $F'$ are linear or basic functions 
on $T^*M$ then $\beta^v(\d F, \d F')=B(\Ha_F, \Ha_{F'})$. 

Therefore, from \eqref{DeformedPoissBrack}, we deduce that the Poisson structures
$\Lambda_{T^*M}^B$ and $\Lambda_{T^*M}$ are related as follows
\begin{equation}\label{r1}
 \Lambda^B_{T^*M}=\Lambda_{T^*M}+\beta^v.
\end{equation}

On the other hand, we consider the section $\bar{\beta}$ of the vector bundle $\wedge^2V^*\pi\to M$  defined by  
\[
 \bar\beta(x)=\beta(x)_{|V_x\pi\times V_x\pi},
 \qquad\mbox{for any }x\in N.
\]
If $\pbrEm^B_{V^*\pi}$ and $\pbrEm_{V^*\pi}$ denote the 
Poisson brackets on $V^*\pi$ induced by $\Lambda^B_{V^*\pi}$ and 
$\Lambda_{V^*\pi}$, respectively, from \eqref{DeformedPoissBrack} and Proposition 
\ref{PoissonInducedByMu}, we have that
\begin{equation}\label{DeformedPoissBrackV}
 \pbr{f,f'}^B_{V^*\pi}\circ\mu_\pi 
  = \pbr{f,f'}_{V^*\pi}\circ\mu_\pi 
    +\beta^v(\mu_\pi^*(\d f),\mu_\pi^*(\d f')), 
\end{equation}
for $f,f'\in C^\infty(V^*\pi)$.

Moreover, one may easily prove that
\[
 \beta^v(\mu_\pi^*(\d f),\mu_\pi^*(\d f')) = \bar\beta^v(\d f,\d f') \circ\mu_\pi.
\]
Thus, 
\begin{equation}\label{r2}
 \Lambda^B_{V^*\pi}=\Lambda_{V^*\pi}+\bar\beta^v.
\end{equation}
\end{ex}


In the last part of this section we will study morphisms 
in the category of symplectic principal $\Rr$-bundles.

 Let $\mu:A\to V$ and $\mu':A'\to V'$ be two principal $\Rr$-bundles with 
principal actions $\psi$ and $\psi'$, respectively. Suppose that the  
function $\varphi:A\to A'$ is  a principal $\Rr$-bundle 
morphism, that is, $\varphi$ is equivariant with respect to the 
principal actions, i.e.
\begin{equation}\label{varphiEquivariant}
 \varphi\circ\psi_s=\psi'_s\circ\varphi,
  \qquad\mbox{for any }s\in\Rr.
\end{equation}

From \eqref{varphiEquivariant}, one deduces that the infinitesimal generators $Z_\mu$ and $Z_{\mu'}$ of $\mu:A\to V$ and 
$\mu':A'\to V'$ respectively, are $\varphi$-related. Moreover, by passing 
to the quotient and using \eqref{varphiEquivariant}, one may define a 
map $\varphi^V:V\to V'$ characterized by the following relation
\begin{equation}\label{InducedVarphiV}
 \mu'\circ\varphi=\varphi^V\circ\mu.
\end{equation}
Note that, since $\mu$ is a submersion, then $\varphi^V$ is smooth. The following diagram 
illustrates the situation
\[
 \xymatrix{
   A \ar@{->}[r]^{\varphi} \ar@{->}[d]_{\mu} & A' \ar@{->}[d]^{\mu'} \\
   V \ar@{->}[r]^{\varphi^V} & V' 
          }
\]
Now, suppose that $\varphi$ is a  principal $\Rr$-bundle embedding. Then, $\varphi^V$ is 
also an embedding. In fact, using \eqref{varphiEquivariant}, 
\eqref{InducedVarphiV} and the fact that $\mu\circ\psi_s=\mu$, for all $s$, 
we deduce that $\varphi^V$ is an injective immersion. Moreover, standard topological 
arguments show that the map $\varphi^V:V\to\varphi^V(V)$ is an homeomorphism. 

On the other hand, if $\varphi:A\to A'$ is a diffeomorphism, then $\varphi^V$ also is a diffeomorphism. 
Indeed, 
\[
 (\varphi^V)^{-1}=(\varphi^{-1})^{V'}.
\]

\begin{defn}
 Let $\mu:(A,\Omega)\to V$ and $\mu':(A',\Omega')\to V'$ be symplectic 
 principal $\Rr$-bundles. A smooth function $\varphi:A\to A'$ is said to be a 
\emph{symplectic  principal $\Rr$-bundle morphism} if $\varphi$ is a  principal $\Rr$-bundle 
morphism such that $\varphi^*\Omega'=\Omega$.
\end{defn}
If we change the word ``morphism'' by ``embedding'' (resp., ``isomorphism'') 
in the previous definition, we obtain the notion of a \emph{symplectic  principal $\Rr$-bundle 
embedding} (resp., a \emph{symplectic  principal $\Rr$-bundle isomorphism}).
In what follows, we related the Poisson structures induced on a symplectic principal $\Rr$-bundle embedding 
 
Firstly,  we remark that in general, if $\varphi:(A,\Omega)\to (A',\Omega)$ is a symplectic morphism, $\varphi$ is not a Poisson morphism with respect to the corresponding Poisson structures $\Lambda_{A}$ and $\Lambda_{A'}$ (see, for instance, \cite{OrRa}). In fact, if $\varphi:(A,\Omega)\to (A',\Omega)$ is a symplectic embedding, then one may give a relation between the corresponding Poisson structures $\Lambda_{A}$ and $\Lambda_{A'}$  on $A$ and $A'$, respectively. Under the identification of  the tangent space $T_aA$ at  $a\in A$ with $T_a\varphi(T_aA),$ we have 
\begin{equation}\label{DirectSumDual}
 T^*_{\varphi(a)}A'=(T_aA)^o\oplus((T_aA)^{\Omega'})^o=(T_aA)^o\oplus(\#_{\Lambda_{A'}}(T_aA)^o)^o
\end{equation}
where $(T_aA)^{\Omega'}$ denotes the symplectic orthogonal space of 
$T_a\varphi(T_aA)\cong T_aA$ with respect to the symplectic form $\Omega'_{\varphi(a)}$ on $T_{\varphi(a)}A'$ and $W^o$ denotes the 
annihilator  of a subspace $W\subset T_{\varphi(a)}A'$ in $T^*_{\varphi(a)}A'$. 

Denote by  $\widetilde{P}_a:T^*_{\varphi(a)}A'\to ((T_aA)^{\Omega'})^o$ and $\widetilde{Q}_a:T^*_{\varphi(a)}A'\to(T_aA)^o$ the corresponding  projectors  associated with the splitting 
\eqref{DirectSumDual}. Note that if $\alpha\in(T_aA)^o$, then  
 $(T^*_a\varphi)(\alpha)=0$ and $\widetilde{P}_a(\alpha)=0.$
Using these facts, we deduce that  the Poisson structures $\Lambda_A$ and $\Lambda_{A'}$  are related as follows
\begin{equation}\label{LambdaRelation}
  \Lambda_A(a) ( \varphi^*(\alpha'_1), \varphi^*(\alpha'_2)) 
       = \Lambda_{A'}(\varphi(a)) ( \alpha'_1 , \alpha'_2 )
        - \Lambda_{A'}(\varphi(a)) ( \widetilde{Q}_a(\alpha'_1) , \widetilde{Q}_a(\alpha'_2) )
\end{equation} for all $\alpha'_1, \alpha'_2\in T^*_{\varphi(a)}A'$.

Now, let $\varphi:A\to A'$ be an embedding of the  principal $\Rr$-bundles 
$\mu:(A,\Omega)\to V$ and $\mu':(A',\Omega')\to V'$ and let 
$\varphi^V:V\to V'$ be the corresponding embedding between the base spaces $V$ and $V'$. Using \eqref{InducedVarphiV}, \eqref{DirectSumDual} 
and the fact that the 
infinitesimal generators $Z_\mu$ and $Z_{\mu'}$ are 
$\varphi$-related, one may obtain that 
\[
 T_{\varphi^V(v)}V'=T_vV\oplus T_{\varphi(a)}\mu'((T_aA)^{\Omega'}).
\]
Moreover, from \eqref{InducedVarphiV} and since $\dim(T_vV)^o=\dim(T_aA)^o$, it 
follows that 
\[
 (T_aA)^o=T^*_{\varphi(a)}\mu((T_vV)^o),
\]
$(T_vV)^o$ being the annihilator of $T_vV$ in $T_{\varphi^V(v)}V'$. Therefore, 
from 
the fact that $\mu'$ is a Poisson map, we deduce that 
\[
 T_{\varphi(a)}\mu'((T_aA)^{\Omega'})=T_{\varphi(a)}\mu'(\sharp_{\Lambda_{A'}}((T_aA)^o))=\sharp_{\Lambda_{V'}}(T_vV)^o
\]
where $\Lambda_{V'}$ is the Poisson structure on $V'$ induced by the symplectic 
 principal $\Rr$-bundle $\mu':(A',\Omega')\to V'$. So, we may again consider the splittings 
\begin{gather*}
 T^*_{\varphi^V(v)}V'=(T_vV)^o\oplus \big(\sharp_{\Lambda_{V'}}(T_vV)^o\big)^o
\end{gather*}
and the first projector $\widetilde{q}_v:T^*_{\varphi^V(v)}V'\to(T_vV)^o.$

Now, from \eqref{muPoissonMap},  \eqref{InducedVarphiV}
and \eqref{LambdaRelation} and the relation $\widetilde{Q}_a\circ T^*_{\varphi(a)}\mu'=T^*_{\varphi(a)}\mu'\circ\widetilde{q}_v$, we obtain
\begin{prop}\label{PoissonEmbedding}
 Let $\varphi:A\to A'$ be an embedding of the symplectic principal $\Rr$-bundles 
$\mu:(A,\Omega)\to V$ and $\mu':(A',\Omega')\to V'$ and let 
$\varphi^V:V\to V'$ be the corresponding embedding between the base spaces $V$ and $V'$. 
Then, the Poisson structures $\Lambda_V$ and $\Lambda_{V'}$  induced on $V$ and $V'$ 
           respectively, by $\mu$ and $\mu'$,  are related by
\[
 \begin{split}
  \Lambda_V(v) ( (\varphi^V)^*(\sigma'_1), (\varphi^V)^*(\sigma'_2 )) 
   &= \Lambda_{V'}(\varphi^V(v)) (\sigma'_1 , \sigma'_2 ) 
         - \Lambda_{V'}(\varphi^V(v)) ( \widetilde{q}_v(\sigma'_1) , \widetilde{q}_v(\sigma'_2) )
 \end{split}
\]
            with $v\in V$ and $\sigma'_1, \sigma'_2\in T^*_{\varphi^V(v)}V'$. If $\varphi:A\to A'$ is an isomorphism of  principal $\Rr$-bundles, then $\varphi^V:V\to V'$ is a Poisson isomorphism.  
\end{prop}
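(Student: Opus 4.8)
The plan is to derive the stated identity for $\Lambda_V$ directly from the already-established relation \eqref{LambdaRelation} for $\Lambda_A$, by pulling everything up along the Poisson maps $\mu$ and $\mu'$ and using the compatibility relation $\widetilde{Q}_a\circ T^*_{\varphi(a)}\mu'=T^*_{\varphi(a)}\mu'\circ\widetilde{q}_v$ that has just been noted. Fix $v\in V$ and choose $a\in A$ with $\mu(a)=v$; then $\mu'(\varphi(a))=\varphi^V(v)$ by \eqref{InducedVarphiV}. Given covectors $\sigma'_1,\sigma'_2\in T^*_{\varphi^V(v)}V'$, set $\alpha'_j=T^*_{\varphi(a)}\mu'(\sigma'_j)\in T^*_{\varphi(a)}A'$.

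First I would record the two translation identities. On one hand, differentiating \eqref{InducedVarphiV} gives $\mu'\circ\varphi$ and $\varphi^V\circ\mu$ equal, so $\varphi^*(T^*\mu'(\sigma'_j))=T^*\mu((\varphi^V)^*(\sigma'_j))$, i.e. $\varphi^*(\alpha'_j)=T^*_a\mu\big((\varphi^V)^*(\sigma'_j)\big)$. On the other hand, since $\mu$ is a Poisson map (Proposition \ref{PoissonInducedByMu}), for any $\sigma_1,\sigma_2\in T^*_vV$ we have $\Lambda_A(a)(T^*_a\mu(\sigma_1),T^*_a\mu(\sigma_2))=\Lambda_V(v)(\sigma_1,\sigma_2)$, and likewise for $\mu'$ at $\varphi(a)$. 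Applying \eqref{LambdaRelation} to the pair $\alpha'_1,\alpha'_2$ and substituting these three facts, the left-hand side $\Lambda_A(a)(\varphi^*\alpha'_1,\varphi^*\alpha'_2)$ becomes $\Lambda_V(v)\big((\varphi^V)^*\sigma'_1,(\varphi^V)^*\sigma'_2\big)$, the first term on the right becomes $\Lambda_{V'}(\varphi^V(v))(\sigma'_1,\sigma'_2)$, and — using $\widetilde{Q}_a\circ T^*\mu'=T^*\mu'\circ\widetilde{q}_v$ together with the Poisson property of $\mu'$ — the last term becomes $\Lambda_{V'}(\varphi^V(v))(\widetilde{q}_v(\sigma'_1),\widetilde{q}_v(\sigma'_2))$. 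This yields exactly the claimed formula.

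For the final assertion, suppose $\varphi$ is a principal $\Rr$-bundle isomorphism. Then $\varphi^V$ is a diffeomorphism, with inverse $(\varphi^{-1})^{V'}$, as already observed in the text. Moreover, since $\varphi$ is an isomorphism the subspace $T_a\varphi(T_aA)$ is all of $T_{\varphi(a)}A'$, so its symplectic orthogonal $(T_aA)^{\Omega'}$ is zero; hence in \eqref{DirectSumDual} the summand $(T_aA)^o$ vanishes and the projector $\widetilde{Q}_a$ is zero, and correspondingly $\widetilde{q}_v=0$. The correction term in the displayed formula therefore disappears, leaving $\Lambda_V(v)((\varphi^V)^*\sigma'_1,(\varphi^V)^*\sigma'_2)=\Lambda_{V'}(\varphi^V(v))(\sigma'_1,\sigma'_2)$ for all $v$ and all $\sigma'_1,\sigma'_2$, i.e. $\varphi^V$ is a Poisson isomorphism. (Alternatively, one can argue directly: $\mu'\circ\varphi=\varphi^V\circ\mu$ exhibits $\varphi^V\circ\mu$ as a composition of Poisson maps, and since $\mu$ is a surjective Poisson submersion this forces $\varphi^V$ to be Poisson; applying the same to $\varphi^{-1}$ gives that $(\varphi^V)^{-1}$ is Poisson as well.)

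The only genuinely delicate point is the bookkeeping of the projectors: one must check that $T^*_{\varphi(a)}\mu'$ intertwines the splitting $T^*_{\varphi^V(v)}V'=(T_vV)^o\oplus(\sharp_{\Lambda_{V'}}(T_vV)^o)^o$ with the splitting \eqref{DirectSumDual} upstairs, which is precisely the content of the relation $(T_aA)^o=T^*_{\varphi(a)}\mu((T_vV)^o)$ and of $T_{\varphi(a)}\mu'((T_aA)^{\Omega'})=\sharp_{\Lambda_{V'}}(T_vV)^o$ established just before the proposition; granting those, the identity $\widetilde{Q}_a\circ T^*\mu'=T^*\mu'\circ\widetilde{q}_v$ is immediate and the rest is a substitution. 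So the main obstacle is not an obstacle at all once the preparatory linear-algebra identities are in hand — the proof is essentially a careful transcription of \eqref{LambdaRelation} through $\mu$ and $\mu'$.
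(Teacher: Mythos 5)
Your proof is correct and takes essentially the same route as the paper: the paper merely asserts that the proposition follows from \eqref{muPoissonMap}, \eqref{InducedVarphiV}, \eqref{LambdaRelation} and the intertwining relation $\widetilde{Q}_a\circ T^*_{\varphi(a)}\mu'=T^*_{\varphi(a)}\mu'\circ\widetilde{q}_v$, and your argument is exactly the substitution that makes this explicit. Your treatment of the isomorphism case (observing that $\widetilde{Q}_a$ and hence $\widetilde{q}_v$ vanish, so the correction term disappears) is also the intended reading and is correct.
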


\section{Reduction of symplectic principal $\Rr$-bundles}
In this section we describe the reduction process of a symplectic principal $\Rr$-bundle in the presence of a  momentum map. 

\subsection{Canonical actions and momentum maps}

In this subsection, we consider a special type of actions which are compatible 
with the symplectic principal $\Rr$-bundle in a certain sense.
\begin{defn} 
We say that an action $\phi:G\times A\to A$ is a 
\emph{canonical action} on the symplectic  principal $\Rr$-bundle 
$\mu:(A,\Omega)\to V$ with principal action $\psi:\Rr\times A\to A$, if the following conditions hold:
\begin{itemize}
 \item[\emph{i)}]   $\phi$ is a symplectic action,
 \item[\emph{ii)}]  the actions $\psi$ and $\phi$ commute, that is 
                    \begin{equation}\label{PhiPsiCompatible}
                      \phi_g\circ\psi_s=\psi_s\circ\phi_g,
                      \qquad\mbox{for any }g\in G,\,s\in\Rr, 
                    \end{equation}
 \item[\emph{iii)}] the $1$-form $\zeta_\mu=i_{Z_\mu}\Omega$
                    is basic with respect to $\phi$, 
                    i.e.~$\zeta_\mu(\xi_A)=0$ for 
                    any $\xi\in\g$, where $\xi_A$ is the 
                    infinitesimal generator of $\phi$ defined 
                    by $\xi$.
\end{itemize}
\end{defn}
We will see that,  for a canonical 
action on $\mu$ with momentum map,  one 
may induce canonically a Poisson action  with a momentum map on $V$ (for the definition of a momentum map associated with the Poisson action, see Appendix \ref{MWRed}). In fact, let $\xi$ be an element of the Lie algebra $\g$.
Using  that $\zeta_\mu$ is basic, it follows that 
\begin{equation}\label{JxiBasic}
 Z_\mu(J_\xi)=i_{\xi_A}\Omega(Z_\mu)=-\zeta_\mu(\xi_A)=0.
\end{equation}
Therefore, the function 
$J_\xi:A\to\Rr$ is constant on the fibers of $\mu$ and thus, 
\begin{equation}\label{JpsiInvariant}
 J\circ\psi_s=J, \qquad\mbox{for any $s$.}
\end{equation}

Using this fact and \eqref{PhiPsiCompatible}, we may 
define the action $\phi^V:G\times V\to V$ of $G$ on $V$ and 
the map $J^V:V\to\g^*$ characterized by
\begin{align}
  \phi^V_g\circ\mu &= \mu\circ\phi_g,\qquad\mbox{for any }g\in G, \label{muEquivariant}\\
  J^V\circ\mu &= J \label{JVandJ}.
\end{align}
Note that, by construction, $\mu$ is equivariant with respect to the actions $\phi$ and $\phi^V$. 
So, $\mu$ transforms the infinitesimal generator 
$\xi_A\in\X(A)$ of $\xi\in\g$ with respect to the action 
$\phi$ into the infinitesimal generator $\xi_V\in\X(V)$ 
of $\xi$ with respect to the action $\phi^V$, that is,
\begin{equation}\label{TmuInfGenerator}
 T_a\mu(\xi_A(a))=\xi_V(\mu(a)),\qquad\mbox{for any }a\in A.
\end{equation}
Moreover, we have
\begin{prop} If $\phi:G\times A\to A$ is a canonical action equipped 
with a momentum map $J:A\to\g^*$, then: 
\begin{itemize}
 \item[\emph{i)}]  $\phi^V:G\times V\to V$ is a Poisson action;
 \item[\emph{ii)}] $J^V:V\to\g^*$ is a momentum map associated with
            $\phi^V$ and, if $J$ is $\Ad^*$-equivariant, then 
            so is $J^V$.
\end{itemize}
\end{prop}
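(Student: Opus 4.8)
The plan is to transport everything from $A$ down to $V$ along $\mu$, using that $\mu$ is a surjective Poisson submersion (Proposition~\ref{PoissonInducedByMu}) and that the relevant objects on $A$ — the infinitesimal generators $\xi_A$, the $G$-action $\phi$, and the momentum $J$ — are $\mu$-related to their counterparts $\xi_V,\phi^V,J^V$ on $V$. Everything is then a diagram chase, the only point requiring care being $\mu$-projectability, which is what legitimizes moving Poisson brackets between $A$ and $V$ via \eqref{muPoissonMap}.

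\emph{Part i).} Since $\phi$ is symplectic, each $\phi_g$ satisfies $\phi_g^*\Omega=\Omega$, hence is a Poisson automorphism of $(A,\pbrEm_A)$. Fix $g\in G$ and $f,f'\in C^\infty(V)$. Using \eqref{muEquivariant} one has $(f\circ\phi^V_g)\circ\mu=(f\circ\mu)\circ\phi_g$ (and likewise for $f'$), so chaining \eqref{muPoissonMap}, the fact that $\phi_g$ is Poisson, \eqref{muPoissonMap} again and \eqref{muEquivariant} gives
\[
 \pbr{f\circ\phi^V_g,f'\circ\phi^V_g}_V\circ\mu
  =\pbr{(f\circ\mu)\circ\phi_g,(f'\circ\mu)\circ\phi_g}_A
  =\pbr{f\circ\mu,f'\circ\mu}_A\circ\phi_g
  =\big(\pbr{f,f'}_V\circ\phi^V_g\big)\circ\mu .
\]
As $\mu$ is surjective, $\pbr{f\circ\phi^V_g,f'\circ\phi^V_g}_V=\pbr{f,f'}_V\circ\phi^V_g$, so $\phi^V_g$ is a Poisson map and $\phi^V$ is a Poisson action.

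\emph{Part ii).} That $J$ is a momentum map for the symplectic action $\phi$ means $\xi_A=\Ha_{J_\xi}$ for every $\xi\in\g$ (with the conventions behind \eqref{JxiBasic}, i.e.\ $i_{\xi_A}\Omega=\d J_\xi$), so $\xi_A(F)=\pbr{F,J_\xi}_A$ for all $F\in C^\infty(A)$. By \eqref{JVandJ}, $J_\xi=J^V_\xi\circ\mu$, and by \eqref{JxiBasic}--\eqref{JpsiInvariant} this function is $\mu$-projectable; $f\circ\mu$ is obviously $\mu$-projectable for $f\in C^\infty(V)$. Hence, using \eqref{TmuInfGenerator} and then \eqref{muPoissonMap},
\[
 \big(\xi_V(f)\big)\circ\mu=\xi_A(f\circ\mu)=\pbr{f\circ\mu,J^V_\xi\circ\mu}_A=\pbr{f,J^V_\xi}_V\circ\mu .
\]
Surjectivity of $\mu$ gives $\xi_V(f)=\pbr{f,J^V_\xi}_V$ for all $f\in C^\infty(V)$, i.e.\ $\xi_V=\Ha_{J^V_\xi}$ relative to $\pbrEm_V$; thus $J^V$ is a momentum map for $\phi^V$. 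Finally, if $J$ is $\Ad^*$-equivariant, $J\circ\phi_g=\Ad^*_{g^{-1}}\circ J$, then combining \eqref{JVandJ} and \eqref{muEquivariant} yields $J^V\circ\phi^V_g\circ\mu=J\circ\phi_g=\Ad^*_{g^{-1}}\circ J^V\circ\mu$, and cancelling $\mu$ shows $J^V\circ\phi^V_g=\Ad^*_{g^{-1}}\circ J^V$, so $J^V$ is $\Ad^*$-equivariant.

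\emph{Main obstacle.} There is no real obstacle: the proof is bookkeeping. The one delicate point is ensuring that each $J_\xi$ descends to a well-defined smooth function $J^V_\xi$ on $V$ and is $\mu$-projectable — and this is precisely where condition iii) of a canonical action enters, via \eqref{JxiBasic}--\eqref{JpsiInvariant}. Once that is in place, the identities \eqref{muPoissonMap}, \eqref{muEquivariant}, \eqref{TmuInfGenerator}, \eqref{JVandJ} together with the surjectivity of $\mu$ do the rest.
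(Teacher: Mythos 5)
Your proof is correct and follows essentially the same route as the paper: part ii) is verbatim the paper's computation, using $\mu$-relatedness of $\xi_A$ and $\xi_V$, the projectability of $J_\xi$ coming from condition iii) via \eqref{JxiBasic}, and \eqref{muPoissonMap} plus surjectivity of $\mu$. The only cosmetic difference is in part i), where the paper simply cites the last assertion of Proposition~\ref{PoissonEmbedding} (each $\phi_g$ is a symplectic principal $\Rr$-bundle isomorphism, so $\phi^V_g$ is a Poisson isomorphism), while you unwind that same argument directly.
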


\begin{proof}
For any $g\in G$, $\phi_g^V:V\to V$ is just the map induced by the symplectic principal $\Rr$-bundle isomorphism $\phi_g:A\to A$. Thus, using  Proposition \ref{PoissonEmbedding}, it follows that $\phi^V_g$ is a Poisson map.

Now, we prove that $J^V$ is a momentum map, that is, 
$\xi_V=\Ha_{J^V_\xi}$, for any $\xi\in\g$. In fact, for any 
$f\in C^\infty(V)$ and $a\in A$, one has, from 
\eqref{muPoissonMap}, \eqref{JVandJ} and \eqref{TmuInfGenerator}, that
\begin{equation*}
 \begin{split}
  \left(\xi_V(f)\right)(\mu(a)) &=
  \xi_A(a)(f\circ\mu) =\Ha_{J_\xi}(a)(f\circ\mu)=
 \pbr{f\circ\mu,J^V_\xi\circ\mu}_A(a) \\&= \pbr{f,J^V_\xi}_V(\mu(a)) = \Ha_{J^V_\xi}(f))(\mu(a)). 
 \end{split}
\end{equation*}
Since $a$ is an arbitrary element of $A$ and $\mu$ is surjective, we obtain that 
$\xi_V=\Ha_{J^V_\xi}$.

If $J$ is $\Ad^*$-equivariant (see Appendix \ref{MWRed}), then for any $v=\mu(a)\in V$ and for 
any $g\in G$
\[
 \Ad^*_{g^{-1}}(J^V(v))=\Ad^*_{g^{-1}}(J(a))=J(\phi_g(a))=J^V(\phi^V_g(v)).
\]
Thus, $J^V$ is $\Ad^*$-equivariant.
\end{proof}

\subsection{The reduction process of  symplectic principal $\Rr$-bundles}\label{AVred}

In this subsection, we will use the results of Appendix 
\ref{MWRed} to reduce a symplectic  principal $\Rr$-bundle 
equipped with a canonical action and an $\Ad^*$-equivariant 
momentum map.

Suppose that $\mu:(A,\Omega)\to V$ is a symplectic  principal $\Rr$-bundle 
equipped with a canonical action 
$\phi:G\times A\to A$ of a Lie group $G$ with an $\Ad^*$-equivariant 
momentum map $J:A\to\g^*$. One may induce a Poisson action 
$\phi^V:G\times V\to V$ on $V$ with an $\Ad^*$-equivariant 
momentum map $J^V:V\to\g^*$. Assume that $\phi^V$ is free 
and proper. Then, so is $\phi$.

If $\nu\in\g^*$, from Marsden-Weinstein reduction Theorem 
(resp., Poisson reduction Theorem), we may induce a reduced symplectic 
structure $\Omega_\nu$ (resp., a reduced Poisson bracket $\pbrEm_\nu$) on the 
quotient space $A_\nu=J^{-1}(\nu)/G_\nu$ 
(resp., $V_\nu=(J^V)^{-1}(\nu)/G_\nu$). Let's prove that $A_\nu$ 
and $V_\nu$ are the total space and the base manifold,
respectively, of a reduced  principal $\Rr$-bundle $\mu_\nu:A_\nu\to V_\nu$.

The map $\mu_\nu:A_\nu\to V_\nu$ is defined as follows. Using \eqref{JVandJ}, it follows that 
the restriction $\mu:J^{-1}(\nu)\to(J^V)^{-1}(\nu)$
of $\mu$ to the closed submanifold $J^{-1}(\nu)$ is a surjective
submersion. Moreover, we have that the actions 
$\phi:G_\nu\times J^{-1}(\nu)\to J^{-1}(\nu)$ and 
$\phi^V:G_\nu\times (J^V)^{-1}(\nu)\to (J^V)^{-1}(\nu)$ of the 
isotropy group $G_\nu$ on $J^{-1}(\nu)$ and $(J^V)^{-1}(\nu)$ 
respectively, are free and proper and $\mu$ is equivariant with respect 
to them. Denote by 
\[
 \mu_\nu:A_\nu=J^{-1}(\nu)/G_\nu\to V_\nu=(J^V)^{-1}(\nu)/G_\nu
\]
the induced map on the quotient spaces which is characterized by
\begin{equation}\label{muAndPi}
 \mu_\nu\circ\pi_\nu=\pi^V_\nu\circ\mu,
\end{equation}
where $\pi_\nu:J^{-1}(\nu)\to A_\nu$ and 
$\pi^V_\nu:(J^V)^{-1}(\nu)\to V_\nu$ are the corresponding 
canonical projections. Note that $\mu_\nu$ is a surjective 
submersion.
 
Moreover, using \eqref{PhiPsiCompatible} and \eqref{JpsiInvariant}, we have that the principal action $\psi:\Rr\times A\to A$ 
restricts to an action of $\Rr$ on $J^{-1}(\nu)$. So, it defines an action of $\Rr$ on $A_\nu$, $\psi_\nu:\Rr\times A_\nu\to A_\nu$ characterized by
\begin{equation}\label{ReducedAction}
 (\psi_\nu)_s\circ\pi_\nu = \pi_\nu\circ\psi_s.
\end{equation}

In addition, we may prove the following result. 
\begin{thm}\label{AVreductionTheorem}
Let $\mu:(A,\Omega)\to V$ be a symplectic principal $\Rr$-bundle 
equipped with a canonical action $\phi:G\times A\to A$ 
and an $\Ad^*$-equivariant momentum map $J:A\to\g^*$. 
Suppose that the induced action $\phi^V:G\times V\to V$ 
is free and proper. Then, for any $\nu\in\g^*$, 
$\mu_\nu: (A_\nu,\Omega_\nu)\to V_\nu$ is a symplectic 
 principal $\Rr$-bundle with principal action defined by 
\eqref{ReducedAction}, where $\Omega_\nu$ is the reduced symplectic structure 
on the reduced space $A_\nu=J^{-1}(\nu)/G_\nu$. 
Moreover, the restriction of the infinitesimal generator 
$Z_\mu$ of $\mu$ to $J^{-1}(\nu)$ is tangent to 
$J^{-1}(\nu)$ and $\pi_\nu$-projectable. Its 
$\pi_\nu$-projection is the infinitesimal generator 
$Z_{\mu_\nu}$ of $\mu_\nu$.
\end{thm}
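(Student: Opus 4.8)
The plan is to show that $\mu_\nu:(A_\nu,\Omega_\nu)\to V_\nu$ satisfies the three requirements of a symplectic principal $\Rr$-bundle: that $\mu_\nu$ is a principal $\Rr$-bundle for the action $\psi_\nu$ defined by \eqref{ReducedAction}, that $\Omega_\nu$ is symplectic (which we already know from Marsden--Weinstein reduction), and that $\psi_\nu$ is symplectic with respect to $\Omega_\nu$. The first task is to verify that \eqref{ReducedAction} genuinely defines an action of $\Rr$ on $A_\nu$; this follows because $\psi_s$ preserves $J^{-1}(\nu)$ by \eqref{JpsiInvariant} and commutes with $\phi_g$ for $g\in G_\nu$ by \eqref{PhiPsiCompatible}, so it descends to the quotient, and the group law is inherited from that of $\psi$. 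To see that $\psi_\nu$ is a \emph{principal} action, I would argue that it is free (since $\psi$ is free and $\pi_\nu$ is a submersion whose fibers are the $G_\nu$-orbits, an equation $(\psi_\nu)_s([a])=[a]$ forces $\psi_s(a)=\phi_g(a)$ for some $g\in G_\nu$, and one then uses that $\phi$ and $\psi$ commute together with freeness of both to conclude $s=0$) and proper, together with the fact that its orbits are exactly the fibers of $\mu_\nu$: indeed \eqref{muAndPi} combined with the fact that the $\psi$-orbits are the $\mu$-fibers shows $\mu_\nu([a])=\mu_\nu([a'])$ iff $[a']=(\psi_\nu)_s([a])$ for some $s$. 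Hence $\mu_\nu$ is the quotient of $A_\nu$ by the free proper $\Rr$-action $\psi_\nu$, so it is a principal $\Rr$-bundle with base $V_\nu$.

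Next I would identify the infinitesimal generator. Since $Z_\mu$ generates the flow $\psi_s$, which preserves $J^{-1}(\nu)$, the vector field $Z_\mu$ is tangent to $J^{-1}(\nu)$; because the flow descends to $\psi_\nu$ via $\pi_\nu$, the restriction of $Z_\mu$ to $J^{-1}(\nu)$ is $\pi_\nu$-related to a vector field on $A_\nu$ whose flow is $(\psi_\nu)_s$, i.e.\ to $Z_{\mu_\nu}$. This gives the last sentence of the theorem.

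The remaining point --- and the one I expect to be the main obstacle --- is that $\psi_\nu$ is symplectic for $\Omega_\nu$, equivalently $\lie_{Z_{\mu_\nu}}\Omega_\nu=0$. Here one must work through the defining relation of the reduced form, namely $\pi_\nu^*\Omega_\nu=\iota_\nu^*\Omega$ where $\iota_\nu:J^{-1}(\nu)\hookrightarrow A$ is the inclusion. Since $Z_\mu|_{J^{-1}(\nu)}$ is $\pi_\nu$-related to $Z_{\mu_\nu}$, pulling back gives $\pi_\nu^*(\lie_{Z_{\mu_\nu}}\Omega_\nu)=\lie_{Z_\mu|_{J^{-1}(\nu)}}(\iota_\nu^*\Omega)=\iota_\nu^*(\lie_{Z_\mu}\Omega)$, and the last term vanishes because $\psi$ is symplectic on $A$ (so $\lie_{Z_\mu}\Omega=0$ already upstairs). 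Since $\pi_\nu$ is a surjective submersion, $\pi_\nu^*$ is injective on forms, so $\lie_{Z_{\mu_\nu}}\Omega_\nu=0$, i.e.\ $\psi_\nu$ is symplectic. The only delicate bookkeeping is making sure the Lie-derivative-versus-pullback interchange is applied to the correct restricted objects and that the $\pi_\nu$-relatedness of $Z_\mu|_{J^{-1}(\nu)}$ and $Z_{\mu_\nu}$ is used consistently; everything else is formal. Finally, one records that $\mu_\nu$ fits into the commuting square with $\mu$, $\pi_\nu$, $\pi_\nu^V$ via \eqref{muAndPi}, completing the proof.
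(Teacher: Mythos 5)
Your proposal is correct and follows essentially the same route as the paper: freeness of $\psi_\nu$, identification of the $\mu_\nu$-fibers with the $\psi_\nu$-orbits, tangency and $\pi_\nu$-projectability of $Z_\mu$, and invariance of $\Omega_\nu$ deduced from $\pi_\nu^*\Omega_\nu=i_\nu^*\Omega$ (you phrase the last step infinitesimally via $\lie_{Z_{\mu_\nu}}\Omega_\nu=0$, the paper integrates it as $(\psi_\nu)_s^*\Omega_\nu=\Omega_\nu$; these are equivalent). The only step to tighten is the freeness argument: from $a=\psi_s(\phi_g(a))$ one cannot conclude directly from ``freeness of $\phi$ and $\psi$''; instead apply $\mu$ to kill the $\Rr$-part, obtain $\phi^V_g(\mu(a))=\mu(a)$, and use the hypothesis that $\phi^V$ is free to get $g=e$, then freeness of $\psi$ to get $s=0$, which is exactly what the paper does.
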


\begin{proof}
First of all, we will see that $\psi_\nu$ is a free action. Indeed, suppose that 
$(\psi_\nu)_s(\pi_\nu(a))=\pi_\nu(a)$, for $a\in J^{-1}(\nu)$. Then, from \eqref{PhiPsiCompatible} 
and \eqref{ReducedAction}, we deduce that there exists $g\in G_\nu$ such that
\begin{equation}\label{FormAVRedTheorem}
 a=\psi_s(\phi_g(a)).
\end{equation}
This implies that 
\[
 \mu(a)=\mu(\psi_s(\phi_g(a)))=\mu(\phi_s(a))
\]
and, using  
\eqref{muEquivariant}, it follows that $\phi^V_g(\mu(a))=\mu(a)$. Thus, since $\phi^V$ is a free action, we obtain that $g=e$. Therefore, from \eqref{FormAVRedTheorem}, we conclude that $s=0$.

Next, we will prove that the fibers of $\mu_\nu$ are just the orbits of the action of $\Rr$ on $A_\nu=J^{-1}(\nu)/G_\nu$. In other words, we will see that 
\[
 (\psi_\nu)_{\pi_\nu(a)}(\Rr)=(\mu_\nu)^{-1}(\mu_\nu(\pi_\nu(a))), \qquad\mbox{for }a\in J^{-1}(\nu).
\]
In fact, a straightforward computation, using \eqref{muEquivariant}, \eqref{muAndPi} and \eqref{ReducedAction}, proves the result. Consequently, $\mu_\nu:A_\nu\to V_\nu$ is a  principal $\Rr$-bundle. 

Now, as we know, the action $\psi:\Rr\times A\to A$ 
restricts to an action of $\Rr$ on $J^{-1}(\nu)$. This implies that the restriction to $J^{-1}(\nu)$ of the infinitesimal generator 
$Z_\mu$ of $\mu$ is tangent to $J^{-1}(\nu)$ and ${Z_\mu}_{|J^{-1}(\nu)}$ is just the infinitesimal generator of the action of $\Rr$ on $J^{-1}(\nu)$. 

In addition, since the projection $\pi_\nu$ is equivariant, we obtain that ${Z_\mu}_{|J^{-1}(\nu)}$ is 
$\pi_\nu$-projectable and its projection is the infinitesimal generator $Z_{\mu_\nu}$ of $\mu_\nu$.

Finally, we prove that $Z_{\mu_\nu}$ is a locally Hamiltonian vector field. We will show that the flow 
$(\psi_\nu)_s:A_\nu\to A_\nu$ of $Z_{\mu_\nu}$ preserves the symplectic form $\Omega_\nu$. In fact, using \eqref{ReducedAction}, \eqref{ReducedSymplecticForm} (see Appendix \ref{MWRed}) and the invariance of $\Omega$ under the action of $\psi_s$, we get
\[
 \pi^*_\nu\left((\psi_\nu)_s^*\Omega_\nu\right) = 
 \psi_s^*(\pi^*_\nu\Omega_\nu) = \psi_s^*(i^*_\nu\Omega) = i^*_\nu\Omega = \pi^*_\nu\Omega_\nu,
\]
As a consequence, we have that $(\psi_\nu)_s^*\Omega_\nu=\Omega_\nu$.
\end{proof}

From Proposition \ref{PoissonInducedByMu}, the symplectic 
$2$-form $\Omega_\nu$ on $A_\nu$ induces a Poisson structure 
$\pbrEm_{V_\nu}$ on $V_\nu$. On the other hand, using Theorem \ref{PoissonReduction}, we have that $V_\nu$ 
is equipped with a reduced Poisson structure. The following result proves that these structures are the 
same one.
\begin{prop}\label{twoPoissStructEqual}
Under the same hypotheses as in Theorem \ref{AVreductionTheorem}, 
the reduced Poisson bracket $\pbrEm_\nu$ on $V_\nu$ 
is just the one induced by the symplectic principal $\Rr$-bundle 
$\mu_\nu:A_\nu\to V_\nu$.
\end{prop}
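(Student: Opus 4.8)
The plan is to show that the two Poisson structures on $V_\nu$ agree by checking the defining property \eqref{muPoissonMap} for the one induced by $\mu_\nu:(A_\nu,\Omega_\nu)\to V_\nu$, and then invoking the uniqueness part of Proposition \ref{PoissonInducedByMu}. So I would test both brackets on functions of the form $f\circ\mu_\nu$ with $f\in C^\infty(V_\nu)$, since these are exactly the $\mu_\nu$-projectable functions and it suffices to compare the brackets there.

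First I would set up the notation: let $i_\nu:J^{-1}(\nu)\hookrightarrow A$ and $i^V_\nu:(J^V)^{-1}(\nu)\hookrightarrow V$ be the inclusions, $\pi_\nu$ and $\pi^V_\nu$ the quotient projections, and recall the characterizing relations for the reduced Poisson brackets: for $\bar f,\bar h\in C^\infty(A_\nu)$ one has $\pbr{\bar f,\bar h}_\nu\circ\pi_\nu=\pbr{\bar f\circ\pi_\nu,\bar h\circ\pi_\nu}_A\circ i_\nu$ after extending $\bar f\circ\pi_\nu$ to a $G_\nu$-invariant function on a neighbourhood of $J^{-1}(\nu)$ in $A$ (this is the standard statement recalled in the Appendix), and similarly on $V$ with $\pbrEm_{V_\nu}^{\,\mathrm{red}}$, the reduced Poisson structure coming from Theorem \ref{PoissonReduction}. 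Next, given $f,h\in C^\infty(V_\nu)$, I would pull them up: $f\circ\pi^V_\nu$ and $h\circ\pi^V_\nu$ are $G_\nu$-invariant functions on $(J^V)^{-1}(\nu)$; extend them to $G_\nu$-invariant functions $\tilde f,\tilde h$ on $V$. Using relation \eqref{muAndPi}, namely $\mu_\nu\circ\pi_\nu=\pi^V_\nu\circ\mu$, the functions $(f\circ\mu_\nu)\circ\pi_\nu=(f\circ\pi^V_\nu)\circ\mu$ and similarly for $h$ are restrictions to $J^{-1}(\nu)$ of the $G_\nu$-invariant, $\mu$-projectable functions $\tilde f\circ\mu$, $\tilde h\circ\mu$ on $A$. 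Now I compute: the bracket $\pbr{f\circ\mu_\nu,h\circ\mu_\nu}_{V_\nu}$ induced by $\mu_\nu$ satisfies, by \eqref{muPoissonMap} applied to $\mu_\nu$ and then to the characterization of $\pbrEm_\nu$ on $A_\nu$,
\[
\pbr{f\circ\mu_\nu,h\circ\mu_\nu}_{V_\nu}\circ\mu_\nu\circ\pi_\nu
=\pbr{(f\circ\mu_\nu)\circ\pi_\nu,(h\circ\mu_\nu)\circ\pi_\nu}_A\circ i_\nu
=\pbr{\tilde f\circ\mu,\tilde h\circ\mu}_A\circ i_\nu.
\]
On the other side, $\mu$ being a Poisson map (Proposition \ref{PoissonInducedByMu}) gives $\pbr{\tilde f\circ\mu,\tilde h\circ\mu}_A=\pbr{\tilde f,\tilde h}_V\circ\mu$, and then the characterization of the reduced Poisson bracket $\pbrEm_{V_\nu}^{\,\mathrm{red}}$ on $V_\nu$ gives $\pbr{\tilde f,\tilde h}_V\circ i^V_\nu=\pbr{f,h}^{\,\mathrm{red}}_{V_\nu}\circ\pi^V_\nu$. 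Restricting the identity $\pbr{\tilde f\circ\mu,\tilde h\circ\mu}_A\circ i_\nu=(\pbr{\tilde f,\tilde h}_V\circ i^V_\nu)\circ\mu$ and using \eqref{muAndPi} once more, both computations land on $\pbr{f,h}^{\,\mathrm{red}}_{V_\nu}\circ\mu_\nu\circ\pi_\nu$; since $\mu_\nu\circ\pi_\nu=\pi^V_\nu\circ\mu$ is a surjective submersion, the functions agree, i.e. $\pbr{f\circ\mu_\nu,h\circ\mu_\nu}_{V_\nu}\circ\mu_\nu=\pbr{f,h}^{\,\mathrm{red}}_{V_\nu}\circ\mu_\nu$ and hence $\pbrEm_{V_\nu}=\pbrEm_\nu$ on all of $V_\nu$ because $\mu_\nu$ is surjective.

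The main obstacle is bookkeeping rather than conceptual: one must verify that the two-step reduction (first restrict to $J^{-1}(\nu)$, then quotient by $G_\nu$) is compatible with the $\mu$-projectability, i.e. that a $\mu_\nu$-projectable, $G_\nu$-invariant function on $A_\nu$ admits a local extension to $A$ that is simultaneously $\mu$-projectable and $G_\nu$-invariant — this is where relation \eqref{muAndPi} and the equivariance of $\mu$ (together with the fact, from Theorem \ref{AVreductionTheorem}, that $Z_\mu$ restricts to $J^{-1}(\nu)$ and projects to $Z_{\mu_\nu}$) do the work. Concretely, one may first extend the $G_\nu$-invariant function $f\circ\pi^V_\nu$ on $(J^V)^{-1}(\nu)$ to $V$ and pull it back by $\mu$, which is automatically $\mu$-projectable; then $f\circ\mu_\nu$ projectability and invariance are inherited through \eqref{muAndPi}. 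Once this compatibility is in hand, the chain of equalities above closes, and the uniqueness clause in Proposition \ref{PoissonInducedByMu} finishes the argument without any further computation.
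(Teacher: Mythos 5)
Your proposal is correct and follows essentially the same route as the paper's proof: choose invariant extensions on $V$ of the pulled-back functions, observe via \eqref{muAndPi} that their pullbacks by $\mu$ are invariant extensions on $A$, and then close the chain using the characterization of the reduced brackets together with the Poisson-map property of $\mu$ and $\mu_\nu$ (Proposition \ref{PoissonInducedByMu}) and the surjectivity of the projections. One small correction: the extensions must be taken $G$-invariant, not merely $G_\nu$-invariant, since the characterizations in Theorems \ref{PoissonReduction} and \ref{SymplecticReduction} require full $G$-invariance --- for a merely $G_\nu$-invariant extension $\tilde f$ the Hamiltonian vector field $\Ha_{\tilde f}$ need not be tangent to $J^{-1}(\nu)$ (one only has $\Ha_{\tilde f}(J_\xi)=-\xi_A(\tilde f)=0$ for $\xi\in\g_\nu$), so the restricted bracket would depend on the choice of extension; with $G$-invariant extensions, as in the paper, the rest of your argument goes through verbatim.
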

\begin{proof}
Let $f_\nu,f'_\nu$ be functions on $V_\nu$ and 
$\pi^V_\nu(v)\in V_\nu$, with $v\in(J^V)^{-1}(\nu)$. 
Choose $a\in J^{-1}(\nu)$ such that $\mu(a)=v$.
The bracket $\pbrEm_\nu$ is characterized by
\[
 \pbr{ f_\nu , f'_\nu }_\nu(\pi^V_\nu(v)) = 
 \pbr{ f , f' }_V(v)
\]
where $f,f'\in C^\infty(V)$ are arbitrary 
$G$-invariant extensions of $f_\nu\circ\pi^V_\nu$ 
and $f'_\nu\circ\pi^V_\nu$, respectively.

Note that $f\circ\mu,f'\circ\mu\in C^\infty(A)$ are 
$G$-invariant extensions of $f_\nu\circ\pi^V_\nu\circ\mu_{|J^{-1}(\nu)}$ 
and $f'_\nu\circ\pi^V_\nu\circ\mu_{|J^{-1}(\nu)}$, respectively. 
Applying Theorem \ref{SymplecticReduction} (see Appendix \ref{MWRed}), we 
obtain that the Poisson bracket $\pbrEm_{A_\nu}$ on $A_\nu$ 
induced by $\Omega_\nu$ may be expressed as follows
\[
 \pbr{ f_\nu\circ\mu_\nu , f'_\nu\circ\mu_\nu}_{A_\nu}(\pi_\nu(a)) = 
 \pbr{ f\circ\mu , f'\circ\mu }_A(a).
\]
Therefore, using \eqref{muPoissonMap} refered to $\mu$ and $\mu_\nu$, we have
\[
 \begin{split}
  \pbr{f_\nu,f'_\nu}_{V_\nu}(\pi^V_\nu(v)) 
  &= \pbr{ f_\nu\circ\mu_\nu , f'_\nu\circ\mu_\nu }_{A_\nu}(\pi_\nu(a)) \\
  &= \pbr{ f\circ\mu , f'\circ\mu}_A(a) = \pbr{ f_\nu , f'_\nu }_\nu(\pi^V_\nu(v)).
 \end{split}
\]
This proves that $\pbr{f_\nu,f'_\nu}_{V_\nu}=\pbr{f_\nu,f'_\nu}_\nu$.
\end{proof}

\subsection{The standard case}
 In this subsection, we want to apply the reduction procedure to the standard symplectic principal $\Rr$-bundle $\mu_\pi:(T^*M,\Omega_M)\to V^*\pi$ associated with a surjective submersion $\pi:M\to\Rr$ (see Section \ref{section2} and Example \ref{StAVbundle}), where $\Omega_M$ is the canonical symplectic structure 
on $T^*M$. 
 
Suppose that $\phi:G\times M\to M$ is an action of a connected Lie group $G$ on the manifold $M$. 
The lifted action $T^*\phi:G\times T^*M\to T^*M$ is symplectic with respect to the 
standard symplectic structure $\Omega_M$ on $T^*M$ and it admits an 
$\Ad^*$-equivariant momentum map $J:T^*M\to\g^*$ 
given by 
\begin{equation}\label{Jcotangent}
 J(\alpha_x)(\xi)=J_\xi(\alpha_x)=\alpha_x(\xi_M(x)),
  \qquad\mbox{for any }\xi\in\g
\end{equation}
where $\xi_M\in\X(M)$ is the infinitesimal generator of $\phi$ 
associated with $\xi$.

The following result gives a sufficient condition for $T^*\phi$ 
to be a canonical action on the standard symplectic principal $\Rr$-bundle $\mu_\pi$.
\begin{prop}
Let $\pi:M\to\Rr$ be a surjective submersion. Denote by $\mu_\pi:(T^*M,\Omega_M)\to V^*\pi$ the corresponding standard symplectic
 principal $\Rr$-bundle and by $T^*\phi:G\times T^*M\to T^*M$ the cotangent 
lift of an action $\phi:G\times M\to M$ of a connected Lie group $G$ on $M$. If $\pi$ is $G$-invariant, i.e.~$\pi\circ\phi_g=\pi$ for any $g\in G$, 
then $T^*\phi$ is a canonical action on $\mu_\pi$.
\end{prop}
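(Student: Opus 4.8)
The plan is to verify the three conditions in the definition of a canonical action for $\phi=T^*\phi$ on the symplectic principal $\Rr$-bundle $\mu_\pi\colon(T^*M,\Omega_M)\to V^*\pi$. Condition \emph{i)}, that $T^*\phi$ is symplectic, is already recalled in the text (the cotangent lift of any diffeomorphism preserves $\Omega_M$), so nothing new is needed there. For condition \emph{ii)}, the commutativity of $T^*\phi$ with the principal action $\psi_\pi$, I would argue as follows: $\psi_\pi$ is translation of a covector $\alpha_x$ by the pullback $\pi^*(\d t)(x)$, while $T^*\phi_g$ acts by $(T_x\phi_g)^*$ composed with the point change $x\mapsto \phi_{g^{-1}}(x)$ (or $\phi_g(x)$, depending on conventions). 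The key is that $T^*\phi_g$ sends the affine line $\alpha_x+\Rr\,\pi^*(\d t)(x)$ to $\beta_{y}+\Rr\,(T\phi_g)^*(\pi^*(\d t))$, and since $\pi$ is $G$-invariant, $\phi_g^*(\pi^*(\d t))=(\pi\circ\phi_g)^*(\d t)=\pi^*(\d t)$; hence the affine direction is preserved and the two one-parameter families of maps commute. This is a short direct computation, essentially pointwise on covectors.

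Condition \emph{iii)} is the substantive one. Here I would first identify $\zeta_{\mu_\pi}=i_{Z_{\mu_\pi}}\Omega_M$. From Section \ref{section2} we know that $Z_{\mu_\pi}=\Ha_{-\pi\circ\pi_M}$, so $i_{Z_{\mu_\pi}}\Omega_M = -\d(\pi\circ\pi_M) = -\pi_M^*(\pi^*(\d t))$; equivalently $\zeta_{\mu_\pi}=\pi_M^*(\pi^*(\d t))$ up to sign, a basic one-form on $T^*M$ pulled back from $M$ via $\pi_M$. To check it is basic with respect to $T^*\phi$, I must show $\zeta_{\mu_\pi}((\xi_M)_{T^*M})=0$ for every $\xi\in\g$, where $(\xi_M)_{T^*M}$ is the infinitesimal generator of the cotangent lift (the complete lift of $\xi_M$). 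Since $\zeta_{\mu_\pi}$ is $\pi_M$-basic and $(\xi_M)_{T^*M}$ is $\pi_M$-projectable onto $\xi_M$, the pairing equals $(\pi^*(\d t))(\xi_M)\circ\pi_M$, evaluated on $M$. Finally, $\pi^*(\d t)(\xi_M) = \xi_M(\pi\circ\pi_M\text{-type argument}) = \d(t\circ\pi)(\xi_M)$, and this vanishes precisely because $\pi$ is $G$-invariant: differentiating $\pi\circ\phi_g=\pi$ at $g=e$ gives $\d\pi(\xi_M)=0$, i.e. $\xi_M$ is tangent to the fibers of $\pi$, hence annihilated by $\pi^*(\d t)$.

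I expect the only real bookkeeping obstacle to be keeping the identifications straight — in particular confirming that the infinitesimal generator of the cotangent lift $T^*\phi$ is $\pi_M$-related to $\xi_M$ (so that the basic one-form $\zeta_{\mu_\pi}$ can be evaluated downstairs on $M$), and being careful with signs and with the direction of the $G$-action in the formula for $T^*\phi_g$. None of these is deep: the heart of the matter is the elementary observation that $G$-invariance of $\pi$ forces each infinitesimal generator $\xi_M$ to be $\pi$-vertical, which simultaneously makes $\zeta_{\mu_\pi}$ basic (condition \emph{iii)}) and makes $\pi^*(\d t)$ — equivalently the affine structure defining $\psi_\pi$ — invariant (feeding condition \emph{ii)}). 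I would present the proof in the order: recall condition \emph{i)}; establish $\phi_g^*(\pi^*(\d t))=\pi^*(\d t)$ and deduce \emph{ii)}; then compute $\zeta_{\mu_\pi}=-\pi_M^*(\pi^*(\d t))$ and deduce \emph{iii)} from $\d\pi(\xi_M)=0$.
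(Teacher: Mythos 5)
Your proposal is correct, and it diverges from the paper's argument in one substantive place: the treatment of condition \emph{ii)}. The paper works entirely with Hamiltonian vector fields: it recalls that $\xi_{T^*M}=\Ha_{\widehat{\xi}_M}$ and $Z_{\mu_\pi}=\Ha_{-\pi\circ\pi_M}$, proves condition \emph{iii)} by computing $\Omega_M(\xi_{T^*M},Z_{\mu_\pi})=\xi_M(\pi)\circ\pi_M=0$, and then gets \emph{ii)} infinitesimally from $[Z_{\mu_\pi},\xi_{T^*M}]=\Ha_{\{\pi\circ\pi_M,\widehat{\xi}_M\}_{T^*M}}=0$, invoking the connectedness of $G$ to pass from commuting generators to commuting actions. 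You instead prove \emph{ii)} directly at the group level: $T^*\phi_g$ acts fibrewise by the linear map $(T\phi_{g^{-1}})^*$, which sends $\pi^*(\d t)(x)$ to $\pi^*(\d t)(\phi_g(x))$ precisely because $\phi_g^*(\pi^*(\d t))=\pi^*(\d t)$, so the translation $\psi_\pi$ and the lifted action commute pointwise. This buys you something the paper's route does not: your argument for \emph{ii)} (and indeed for \emph{i)} and \emph{iii)}) never uses connectedness of $G$, so you actually prove the proposition without that hypothesis, whereas the paper genuinely needs it to integrate the bracket identity. For condition \emph{iii)} your computation is the same as the paper's in substance — both reduce $\zeta_{\mu_\pi}(\xi_{T^*M})$ to $\xi_M(\pi)\circ\pi_M$ and kill it by differentiating $\pi\circ\phi_g=\pi$ — you just phrase it via the explicit formula $\zeta_{\mu_\pi}=-\pi_M^*(\pi^*(\d t))$ and $\pi_M$-relatedness of $\xi_{T^*M}$ and $\xi_M$, rather than via $\Ha_{\widehat{\xi}_M}(\pi\circ\pi_M)$. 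If you write this up, do spell out the one-line covector computation for \emph{ii)} rather than leaving it as described, and fix a convention for $T^*\phi_g$ once so the signs and the $g$ versus $g^{-1}$ bookkeeping are unambiguous.
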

\begin{proof}
Recall that the infinitesimal generator $\xi_{T^*M}$ of the 
action $T^*\phi$ associated to an element $\xi$ of $\g$ is just the 
Hamiltonian vector field of the linear function 
$\widehat{\xi}_M\in C^\infty(T^*M)$ associated with $\xi_M\in\X(M)$.

Moreover, since $\pi_M:T^*M\to M$ is equivariant with respect to 
the actions $T^*\phi$ and $\phi$, the vector fields 
$\xi_{T^*M}$ and $\xi_M$ are $\pi_M$-related. Now, using  that 
$Z_{\mu_\pi}$ is the Hamiltonian vector field 
of the function $-\pi\circ\pi_M$ and that $\pi$ is $G$-invariant, we get
\[
 \begin{split}
  \Omega_M(\xi_{T^*M},Z_{\mu_\pi})
    = \Ha_{\widehat\xi_M}(\pi\circ\pi_M)
   = \xi_{T^*M}(\pi\circ\pi_M) 
     = \xi_M(\pi)\circ\pi_M = 0.
 \end{split}
\]
Thus,  $\zeta_{\mu_\pi}=i_{Z_{\mu_\pi}}\Omega_M$ is basic. It follows also that
\[
 \begin{split}
  [Z_{\mu_\pi},\xi_{T^*M}] = - [\Ha_{\pi\circ\pi_M},\Ha_{\widehat\xi_M}] 
    = \Ha_{\pbr{\pi\circ\pi_M,\widehat\xi_M}_{T^*M}} = 0
 \end{split}
\]
for all $\xi\in\g$. Since $G$ is connected, the actions $\psi$ and $T^*\phi$ commute.
\end{proof}

Moreover, we note that if $\phi$ is free and proper, so is $(T^*\phi)^{V^*\pi}$.

The rest of the subsection is devoted to
give sufficient conditions for the reduced symplectic principal $\Rr$-bundle obtained from a standard 
principal $\Rr$-bundle to be again standard.  
We will use some well known results of the cotangent bundle reduction theory 
(see, for instance, \cite{MarsEtAl}). 

Suppose that a connected Lie group 
$G$ acts freely and properly on a manifold $M$. 

We assume that we have a $G$-invariant surjective submersion $\pi:M\to \Rr$.
Using Theorem \ref{AVreductionTheorem}, we obtain a reduced symplectic principal $\Rr$-bundle
\[
 (\mu_\pi)_\nu:((T^*M)_\nu,(\Omega_M)_\nu)\to (V^*\pi)_\nu,
\]
where $(T^*M)_\nu=J^{-1}(\nu)/G_\nu$ and $(V^*\pi)_\nu=(J^{V^*\pi})^{-1}(\nu)/G_\nu.$  

On the other hand, since $\pi$ is $G$-invariant, there exists a unique 
surjective submersion $\pi^*_{1,\nu}:M/G_\nu\to \Rr$ such that
\begin{equation}\label{defTildePi}
 \pi^*_{1,\nu}\circ\pi_{M,G_\nu}=\pi,
\end{equation}
where we have denoted by $\pi_{M,G_\nu}:M\to M/G_\nu$ the $\nu$-shape space.
Thus, we may consider the corresponding standard symplectic principal $\Rr$-bundle 
\[
 \mu_{\pi^*_{1,\nu}}:(T^*(M/G_\nu),\Omega_{M/G_\nu})\to V^*\pi^*_{1,\nu},
\]
$\Omega_{M/G_\nu}$ being the canonical symplectic $2$-form 
on the cotangent bundle $T^*(M/G_\nu)$.

We will prove that, under a suitable hypothesis, the reduced symplectic 
principal $\Rr$-bundle $(\mu_\pi)_\nu$ may be embedded into the standard symplectic principal $\Rr$-bundle $\mu_{\pi^*_{1,\nu}}$, 
where the total space $T^*(M/G_\nu)$ will be equipped with the canonical 
symplectic form $\Omega_{M/G_\nu}$ deformed by a \emph{magnetic term}.

The magnetic term is defined as follows. 
Consider the action $\phi_\nu:G_\nu\times M\to M$ deduced from 
$\phi:G\times M\to M$.
Its cotangent lift $T^*\phi_\nu:G_\nu\times T^*M\to T^*M$ has an 
$\Ad^*$-equivariant momentum map $J_\nu:T^*M\to\g_\nu^*$ obtained by 
restricting $J$, that is, for $\alpha_x\in T^*_xM$,
\begin{equation}\label{Jstep3}
 J_\nu(\alpha_x)=J(\alpha_x)_{|\g_\nu}.
\end{equation}
Let $\nu'=\nu_{|\g_\nu}\in\g_\nu^*$ be the restriction of 
$\nu$ to $\g_\nu$ . Since the actions are free and proper, 
$\nu$ and $\nu'$ are regular values for $J$ and $J_\nu$, respectively. 
Note that the inclusion of submanifolds 
$ \bar{\iota}:J^{-1}(\nu)\hookrightarrow J_\nu^{-1}(\nu') $ is a $G_\nu$-invariant embedding. 

We will use the following assumption
\begin{itemize}
 \item [\emph{(MT)}] There exists a $G_\nu$-invariant $1$-form 
$\lambda_\nu$ on $M$ with values in $J_\nu^{-1}(\nu')$.
\end{itemize}
In fact, if $A:TM\to \g$ is the connection $1$-form associated with a 
principal connection on the principal $G$-bundle $\pi_{M,G}:M\to M/G$ then $\lambda_\nu=\nu\circ A$ defines a $1$-form on $M$ which satisfies the condition 
\emph{(MT)}  (for more details, see \cite{MarsEtAl}).

Now, using that the $2$-form $\d\lambda_\nu$ is basic with respect to the projection $\pi_{M/G_\nu}$, we deduce that there exists a unique closed $2$-form
$\beta_{\lambda_\nu}$ on $M/G_\nu$ such that
\begin{equation*}\label{dConnexionForm}
 \pi_{M,G_\nu}^*\beta_{\lambda_\nu}=\d\lambda_\nu.
\end{equation*}

If we define the $2$-form $B_{\lambda_\nu}$ on $T^*(M/G_\nu)$ as \[
 B_{\lambda_\nu}=\pi_{M/G_\nu}^*\beta_{\lambda_\nu},
\]
where $\pi_{M/G_\nu}:T^*(M/G_\nu)\to M/G_\nu$
is the cotangent bundle projection, we may consider the corresponding standard symplectic principal $\Rr$-bundle 
\[
\mu_{\pi^*_{1,\nu}}:(T^*(M/G_\nu),\Omega_{M/G_\nu}-B_{\lambda_\nu})\to V^*\pi^*_{1,\nu}
\] 
with magnetic term $B_{\lambda_\nu}$ (see Example \ref{StAVbundleB}). 
The form $B_{\lambda_\nu}$ is usually called \emph{the magnetic term associated with $\lambda_\nu$.}

The main theorem of this section is the following one:
\begin{thm}\label{StandardReductionThm}
 Let $\phi:G\times M\to M$ be a free and proper action of a 
connected Lie group $G$ on the manifold $M$ and $\pi:M\to \Rr$ 
a $G$-invariant surjective submersion. 
Let $\nu\in\g^*$ and $\pi^*_{1,\nu}:M/G_\nu\to\Rr$ 
the surjective submersion obtained from $\pi$ by passing 
to the quotient. Choose a $G_\nu$-invariant $1$-form 
$\lambda_\nu\in\Omega^1(M)$ with values in $J_\nu^{-1}(\nu')$. Then there is 
a symplectic principal $\Rr$-bundle embedding
\[
 \varphi_{\lambda_\nu}:(T^*M)_\nu\to T^*(M/G_\nu)
\]
between the reduced symplectic principal $\Rr$-bundle 
$(\mu_\pi)_\nu:((T^*M)_\nu,(\Omega_M)_\nu)\to (V^*\pi)_\nu$ and
the standard symplectic principal $\Rr$-bundle 
$\mu_{\pi^*_{1,\nu}}:(T^*(M/G_\nu),\Omega_{M/G_\nu}-B_{\lambda_\nu})\to V^*\pi^*_{1,\nu}$, 
with the symplectic structure modified by $B_{\lambda_\nu}$ $\in\Omega^2(T^*(M/G_\nu))$, the magnetic term 
 associated with $\lambda_\nu$.

Moreover, $\varphi_{\lambda_\nu}$ is a symplectic principal $\Rr$-bundle isomorphism 
if and only if $\g=\g_\nu$ (in particular, if $\nu=0$ or 
$G=G_\nu$), where $\g_\nu$ is the Lie algebra of $G_\nu$. 
\end{thm}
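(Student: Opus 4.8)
The plan is to construct the embedding $\varphi_{\lambda_\nu}$ explicitly at the level of the total spaces and then verify that it descends to and is compatible with all the structures in play: the symplectic forms, the principal $\Rr$-actions, and hence the projections $\mu_\nu$ and $\mu_{\pi^*_{1,\nu}}$. The construction is the standard one from cotangent bundle reduction (as in \cite{MarsEtAl}): the $G_\nu$-invariant $1$-form $\lambda_\nu$ on $M$, viewed as a map $M\to T^*M$, allows one to ``shift'' the zero level; one defines $\mathrm{shift}_{\lambda_\nu}:T^*M\to T^*M$ by $\alpha_x\mapsto\alpha_x-\lambda_\nu(x)$, which carries $J_\nu^{-1}(\nu')$ diffeomorphically onto $J_\nu^{-1}(0)$ and is $G_\nu$-equivariant. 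Composing with $\bar\iota:J^{-1}(\nu)\hookrightarrow J_\nu^{-1}(\nu')$ and with the canonical identification $J_\nu^{-1}(0)/G_\nu\cong T^*(M/G_\nu)$ gives, after passing to $G_\nu$-quotients, a map $\varphi_{\lambda_\nu}:(T^*M)_\nu=J^{-1}(\nu)/G_\nu\to T^*(M/G_\nu)$. First I would check that each of these three arrows is well defined, smooth, injective, and an immersion, so that the composite is an embedding; injectivity and the immersion property come from $\bar\iota$ being an embedding and the other two maps being diffeomorphisms onto their images.

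Next I would identify the symplectic form. The Marsden--Weinstein reduced form $(\Omega_M)_\nu$ on $(T^*M)_\nu$ is characterized by $\pi_\nu^*(\Omega_M)_\nu=i_\nu^*\Omega_M$, where $i_\nu:J^{-1}(\nu)\hookrightarrow T^*M$. The classical computation in cotangent bundle reduction shows that pulling back $\Omega_{M/G_\nu}$ along the analogous chain produces $\Omega_M$ corrected by the exact term $\d\lambda_\nu$, which after reduction becomes precisely the magnetic term $B_{\lambda_\nu}=\pi_{M/G_\nu}^*\beta_{\lambda_\nu}$; that is, $\varphi_{\lambda_\nu}^*(\Omega_{M/G_\nu}-B_{\lambda_\nu})=(\Omega_M)_\nu$. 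I would verify this by pulling back to $J^{-1}(\nu)$ (where everything can be computed with genuine differential forms rather than on quotients), using $\bar\iota^*(\d\lambda_\nu$-type terms$)$ and the defining relation $\pi_{M,G_\nu}^*\beta_{\lambda_\nu}=\d\lambda_\nu$, and invoking the injectivity of $\pi_\nu^*$ on forms to conclude.

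Then I would check equivariance with respect to the $\Rr$-actions. The shift map intertwines $\psi_\pi$ on $T^*M$ (which adds multiples of $\pi^*(\d t)$) with the corresponding action on $T^*(M/G_\nu)$ (which adds multiples of $(\pi^*_{1,\nu})^*(\d t)$), because $\lambda_\nu$ is $G_\nu$-invariant and $\pi=\pi^*_{1,\nu}\circ\pi_{M,G_\nu}$ forces the $t$-components to match; moreover $\psi_\pi$ preserves $J^{-1}(\nu)$ by \eqref{JpsiInvariant} (applied to $J_\nu$). Passing to quotients, $\varphi_{\lambda_\nu}$ intertwines $(\psi_\pi)_\nu$ with $\mu_{\pi^*_{1,\nu}}$'s principal action, so $\varphi_{\lambda_\nu}$ is a principal $\Rr$-bundle morphism; combined with the symplectic identity, it is a symplectic principal $\Rr$-bundle embedding. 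By the discussion preceding Definition of a symplectic principal $\Rr$-bundle morphism, this automatically yields the induced embedding $(V^*\pi)_\nu\to V^*\pi^*_{1,\nu}$ on the base spaces.

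Finally, for the isomorphism criterion: $\varphi_{\lambda_\nu}$ is onto iff $\bar\iota:J^{-1}(\nu)\hookrightarrow J_\nu^{-1}(\nu')$ is onto, since the shift and the identification $J_\nu^{-1}(0)/G_\nu\cong T^*(M/G_\nu)$ are already bijective. Now $J_\nu^{-1}(\nu')$ consists of covectors $\alpha_x$ with $\alpha_x(\xi_M(x))=\nu(\xi)$ for all $\xi\in\g_\nu$, while $J^{-1}(\nu)$ imposes this for all $\xi\in\g$; a dimension count (the fibers of $\mu_\pi$ over a point of $J_\nu^{-1}(\nu')/G_\nu$ versus those in the image) shows the inclusion is an equality exactly when the extra constraints are vacuous, i.e.\ when $\g=\g_\nu$. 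I expect the main obstacle to be the symplectic-form computation in the second step: one must be careful about where the exact shift term $\d\lambda_\nu$ lives, how it interacts with the restriction to $J^{-1}(\nu)$ versus $J_\nu^{-1}(\nu')$, and how it becomes the \emph{basic} form $B_{\lambda_\nu}$ after reduction — this is precisely the point where the hypothesis \emph{(MT)} and the identity $\pi_{M,G_\nu}^*\beta_{\lambda_\nu}=\d\lambda_\nu$ do the essential work, and keeping track of the two different momentum maps $J$ and $J_\nu$ requires care.
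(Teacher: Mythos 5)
Your plan follows essentially the same route as the paper's proof: the explicit map $\alpha_x\mapsto\alpha_x-\lambda_\nu(x)$ on representatives, the factorization through the inclusion $J^{-1}(\nu)\hookrightarrow J_\nu^{-1}(\nu')$ and the $G_\nu$-reduction at $\nu'$ (where the isotropy group is all of $G_\nu$), the $\Rr$-equivariance check via $\pi=\pi^*_{1,\nu}\circ\pi_{M,G_\nu}$, and the reduction of the isomorphism criterion to surjectivity of that inclusion, i.e.\ to $\g=\g_\nu$. The only difference is that the paper outsources the purely symplectic part (the embedding, the appearance of the magnetic term $B_{\lambda_\nu}$, and the iso criterion) to the cotangent bundle reduction theory of \cite{AbrMarsd,MarsEtAl} and verifies only the new principal-$\Rr$-bundle compatibility, whereas you propose to re-derive the symplectic identity as well; this is sound, just more self-contained.
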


\begin{proof}
 Using the cotangent bundle reduction theory (see \cite{AbrMarsd} for more details), 
we have that there is a symplectic embedding 
\[
 \varphi_{\lambda_\nu}:(T^*M)_\nu\to T^*(M/G_\nu)
\]
which is an isomorphism if and only if $\g=\g_\nu$.

Now, we will prove that $\varphi_{\lambda_\nu}$ is a principal $\Rr$-bundle 
morphism between $(\mu_\pi)_\nu$ and $\mu_{\pi^*_{1,\nu}}$.

Firstly, we suppose that $G=G_\nu$. In such a case, $\varphi_{\lambda_\nu}$ is 
the symplectic isomorphism described as follows. Consider the map
 $\bar\varphi_{\lambda_\nu}:J^{-1}(\nu)\to T^*(M/G)$
given by
\[
 \bar\varphi_{\lambda_\nu}(\alpha_x)(T_x\pi_{M,G}(v_x))=(\alpha_x-\lambda_\nu(x))(v_x)
\]
for all $\alpha_x\in J^{-1}(\nu)\cap T^*_xM$ and $v_x\in T_xM$. This map is 
invariant with respect to $\phi:G\times J^{-1}(\nu)\to J^{-1}(\nu)$. The 
corresponding quotient map from $J^{-1}(\nu)/G=(T^*M)_\nu$ to $T^*(M/G)$ 
is just $\varphi_{\lambda_\nu}$.

Now, we prove that $\bar\varphi_{\lambda_\nu}$ is equivariant with respect 
to the $\Rr$-actions $\psi_\pi:\Rr\times J^{-1}(\nu)\to J^{-1}(\nu)$ and 
$\psi_{\pi^*_{1,\nu}}:\Rr\times T^*(M/G)\to T^*(M/G)$, that is
\begin{equation}\label{MainThmThesis1}
 \bar\varphi_{\lambda_\nu}\circ (\psi_\pi)_s=(\psi_{\pi^*_{1,\nu}})_s\circ \bar\varphi_{\lambda_\nu},
  \qquad\mbox{for any }s\in\Rr.
\end{equation}
In fact, for all $\alpha_x\in J^{-1}(\nu)\cap T^*_xM$ and $v_x\in T_xM$
\[
 [(\psi_{\pi^*_{1,\nu}})_s\circ \bar\varphi_{\lambda_\nu}](\alpha_x)(T_x\pi_{M,G}(v_x)) =  
  (\alpha_x-\lambda_\nu(x))(v_x)+s\,\pi_{M,G}^*(\widetilde{\eta}_{[x]})(v_x)
\]
where $\widetilde{\eta}=(\pi^*_{1,\nu})^*(dt)$. From \eqref{defTildePi}, we deduce 
\eqref{MainThmThesis1}. 

If $\pi_\nu:J^{-1}(\nu)\to J^{-1}(\nu)/G_\nu$ denotes the quotient map, 
from  \eqref{MainThmThesis1} and since $\varphi_{\lambda_\nu}\circ\pi_\nu=\bar\varphi_{\lambda_\nu}$ and $\pi_\nu$ is equivariant with respect to the principal $\Rr$-actions, 
we have that
\[
 \varphi_{\lambda_\nu}\circ((\psi_\pi)_\nu)_s\circ\pi_\nu 
   = \varphi_{\lambda_\nu}\circ\pi_\nu\circ(\psi_\pi)_s 
   = (\psi_{\pi^*_{1,\nu}})_s\circ\varphi_{\lambda_\nu}\circ\pi_\nu,
\]
$(\psi_\pi)_\nu:\Rr\times (T^*M)_\nu\to(T^*M)_\nu$ being the $\Rr$-action on 
the reduced space $(T^*M)_\nu$ induced by $\psi_\pi$. Thus, using the 
fact that $\pi_\nu$ is surjective, we conclude that $\varphi_{\lambda_\nu}$ 
is a symplectic principal $\Rr$-bundle morphism in the case $G=G_\nu$.

Finally, suppose that $\nu$ is an arbitrary element of $\g^*.$ 
Consider the action $\phi_\nu:G_\nu\times M\to M$ induced by $\phi$.
Its cotangent lift $T^*\phi_\nu:G_\nu\times T^*M\to T^*M$ is a symplectic action which admits   an 
$\Ad^*$-equivariant momentum map $J_\nu:T^*M\to\g_\nu^*$ given by 
\eqref{Jstep3}. 

If $\nu'=\nu_{|\g_\nu}\in\g_\nu^*$ then $\nu'$ is a fixed point of the coadjoint action of $G_\nu$, i.e. $(G_\nu)_{\nu'}=G_\nu.$ Moreover, the $G_\nu$-invariant 
embedding $\bar{\iota}:J^{-1}(\nu)\hookrightarrow J_\nu^{-1}(\nu')$ 
descends to the quotient  and we get 
\begin{equation}\label{emb}
 \iota:J^{-1}(\nu)/G_\nu\hookrightarrow J_\nu^{-1}(\nu')/G_\nu.
\end{equation}

Note that $\bar{\iota}$ is equivariant with respect to the 
$\Rr$-actions $\psi_\pi:\Rr\times J^{-1}(\nu)\to J^{-1}(\nu)$ and 
$\psi_\pi:\Rr\times J_\nu^{-1}(\nu')\to J_\nu^{-1}(\nu')$. Thus, $\iota$ 
is equivariant with respect to the reduced $\Rr$-actions
\[
 (\psi_\pi)_\nu:\Rr\times J^{-1}(\nu)/G_\nu\to J^{-1}(\nu)/G_\nu
\]
and 
\[
 (\psi_\pi)_{\nu'}:\Rr\times J_\nu^{-1}(\nu')/G_\nu\to J_\nu^{-1}(\nu')/G_\nu.
\]
On the other hand, $J^{-1}(\nu)/G_\nu$ (resp.,
$J_\nu^{-1}(\nu')/G_\nu$) is the total 
space of the reduced symplectic principal $\Rr$-bundle $(\mu_\pi)_\nu$ 
(resp., $(\mu_\pi)_{\nu'}$) obtained from $\mu_\pi$ using the 
canonical action of $G$ (resp., $G_\nu$) on $T^*M$. 
Consequently, $\iota$ is a symplectic principal $\Rr$-bundle embedding.

Now, using that $(G_\nu)_{\nu'}=G_\nu$ and the first part of the proof, 
we have a symplectic principal $\Rr$-bundle isomorphism
\[
 \varphi_{\lambda_{\nu'}}:(T^*M)_{\nu'}\to T^*(M/G_\nu)
\]
between the reduced symplectic principal $\Rr$-bundles 
$(\mu_\pi)_{\nu'}:((T^*M)_{\nu'},(\Omega_M)_{\nu'})\to (V^*\pi)_{\nu'}$ 
and 
$\mu_{\pi^*_{1,\nu}}:(T^*(M/G_\nu),\Omega_{M/G_\nu}-B_{\lambda_\nu})\to V^*\pi^*_{1,\nu}$.

Composing $\iota$ with $\varphi_{\lambda_{\nu'}}$, we obtain the required embedding 
$\varphi_{\lambda_\nu}$. 
\end{proof}

Under the same hypotheses as in Theorem \ref{StandardReductionThm},  using  (\ref{r1}) and (\ref{r2}), it follows that the Poisson structures on $T^*(M/G_\nu)$ and $V^*\pi^*_{1,\nu}$ are 
$$\Lambda_{T^*(M/G_\nu)}+ \beta_{\lambda_{\nu}}^v\mbox{ and } \Lambda_{V^*\pi^*_{1,\nu}} + \bar{\beta}_{\lambda_\nu}^v, $$
respectively, where $\bar{\beta}_{\lambda_\nu}$ is the restriction of  ${\beta}_{\lambda_\nu}$  to $V\pi^*_{1,\nu}\times  V\pi^*_{1,\nu}.$ 

On the other hand, if $\varphi^V_{\lambda_{\nu}}:(V^*\pi)_\nu\to V^*\pi^*_{1,\nu}$ is the corresponding embedding between the base spaces  of the  principal $\Rr$-bundles, $[\alpha_x]\in (T^*M)_\nu$ and $[\bar\alpha_x]\in (V^*\pi)_\nu, $ then from 
\eqref{DirectSumDual}, we have that 
\begin{gather*}
  T^*_{\varphi_{\lambda_\nu}[\alpha_x]}(T^*(M/G_\nu)) 
   = \big(T_{[\alpha_x]}(T^*M)_\nu\big)^o 
   \oplus \sharp_{\Lambda_{T^*(M/G_\nu)}+\beta_{\lambda_\nu}^v}^{-1}(T_{[\alpha_x]}(T^*M)_\nu), \\
  T^*_{\varphi_{\lambda_\nu}^V[\bar\alpha_x]}(V^*\pi^*_{1,\nu}) 
   = \big(T_{[\bar\alpha_x]}(V^*\pi)_\nu\big)^o 
   \oplus \big(\sharp_{\Lambda_{V^*\pi^*_{1,\nu}}+\bar\beta_{\lambda_\nu}^v}(T_{[\bar\alpha_x]}(V^*\pi)_\nu)^o\big)^o,
 \end{gather*}
and the corresponding projectors 
\begin{gather*}
 Q^*_{[\alpha_x]}:T^*_{\varphi_{\lambda_\nu}[\alpha_x]}(T^*(M/G_\nu))\to \big(T_{[\alpha_x]}(T^*M)_\nu\big)^o  \\
 q^*_{[\bar\alpha_x]}:T^*_{\varphi_{\lambda_\nu}^V[\bar\alpha_x]}(V^*\pi^*_{1,\nu}) \to \big(T_{[\bar\alpha_x]}(V^*\pi)_\nu\big)^o.
\end{gather*}

Moreover, using Proposition \ref{PoissonEmbedding}, we conclude that
\begin{thm}
Under the same hypotheses as in Theorem \ref{StandardReductionThm}, the reduced Poisson 
structures $\Lambda_\nu$ and $\bar\Lambda_\nu$ on $(T^*M)_\nu$ and $(V^*\pi)_\nu,$ 
respectively,  are given by
\[
\begin{split}
 \Lambda_\nu([\alpha_x])\big( \varphi_{\lambda_\nu}^*\alpha'_1 , \varphi_{\lambda_\nu}^*\alpha'_2\big)
       &=  ( \Lambda_{T^*(M/G_\nu)}+\beta_{\lambda_\nu}^v) (\varphi_{\lambda_\nu}[\alpha_x]) \big(\alpha'_1,\alpha'_2\big)\\[5pt]
       &\ \ -( \Lambda_{T^*(M/G_\nu)}+\beta_{\lambda_\nu}^v) (\varphi_{\lambda_\nu}[\alpha_x]) \big( Q^*_{[\alpha_x]}\alpha'_1 , Q^*_{[\alpha_x]}\alpha'_2 \big)
\end{split}
\]
and
\[
\begin{split}
  \bar\Lambda_\nu([\bar\alpha_x])\big( (\varphi_{\lambda_\nu}^V)^*\sigma'_1 , (\varphi_{\lambda_\nu}^V)^*\sigma'_2 \big)  
       &= ( \Lambda_{V^*\pi^*_{1,\nu}}+\bar\beta_{\lambda_\nu}^v) (\varphi_{\lambda_\nu}^V[\bar{\alpha}_x]) \big(\sigma'_1 ,\sigma'_2 \big)\\[5pt]
       &\ \ -( \Lambda_{V^*\pi^*_{1,\nu}}+\bar\beta_{\lambda_\nu}^v) (\varphi_{\lambda_\nu}^V[\bar{\alpha}_x]) \big( q^*_{[\bar\alpha_x]}\sigma'_1 , q^*_{[\bar\alpha_x]}\sigma'_2 \big).
\end{split}
\]
for all $\alpha'_1, \alpha'_2\in T^*_{\varphi_{\lambda_\nu}[\alpha_x]}(T^*(M/G_\nu))$, 
$\sigma'_1,\sigma'_2\in T^*_{\varphi_{\lambda_\nu}^V[\bar\alpha_x]}(V^*\pi^*_{1,\nu})$ 
and $[\alpha_x]\in(T^*M)_\nu$, $[\bar\alpha_x]\in (V^*\pi)_\nu$.
\end{thm}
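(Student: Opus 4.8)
The plan is to derive the two formulas as a direct application of Proposition~\ref{PoissonEmbedding} to the symplectic principal $\Rr$-bundle embeddings produced by Theorem~\ref{StandardReductionThm}. Recall that Theorem~\ref{StandardReductionThm} gives a symplectic principal $\Rr$-bundle embedding $\varphi_{\lambda_\nu}:((T^*M)_\nu,(\Omega_M)_\nu)\to(T^*(M/G_\nu),\Omega_{M/G_\nu}-B_{\lambda_\nu})$ and, on the base spaces, the induced embedding $\varphi^V_{\lambda_\nu}:(V^*\pi)_\nu\to V^*\pi^*_{1,\nu}$. By Proposition~\ref{PoissonEmbedding} applied verbatim to this pair, the Poisson structure $\Lambda_\nu$ on $(T^*M)_\nu$ induced by $(\Omega_M)_\nu$ and the Poisson structure on $T^*(M/G_\nu)$ induced by $\Omega_{M/G_\nu}-B_{\lambda_\nu}$ are related by the pull-back formula with the correction term governed by the projector associated with the splitting \eqref{DirectSumDual}, and similarly the induced Poisson structures on the base spaces $(V^*\pi)_\nu$ and $V^*\pi^*_{1,\nu}$ are related by the analogous formula with the projector $\widetilde q_v$. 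So the only two things to check are: that the Poisson structure on $T^*(M/G_\nu)$ associated with $\Omega_{M/G_\nu}-B_{\lambda_\nu}$ is exactly $\Lambda_{T^*(M/G_\nu)}+\beta_{\lambda_\nu}^v$, and that the Poisson structure on $V^*\pi^*_{1,\nu}$ induced on the base of the symplectic principal $\Rr$-bundle $\mu_{\pi^*_{1,\nu}}:(T^*(M/G_\nu),\Omega_{M/G_\nu}-B_{\lambda_\nu})\to V^*\pi^*_{1,\nu}$ is exactly $\Lambda_{V^*\pi^*_{1,\nu}}+\bar\beta_{\lambda_\nu}^v$; but both of these are precisely the content of equations \eqref{r1} and \eqref{r2} applied with $M$ replaced by $M/G_\nu$, $\pi$ replaced by $\pi^*_{1,\nu}$ and $\beta$ replaced by $\beta_{\lambda_\nu}$. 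Finally, one must identify the projectors: $\widetilde Q_{[\alpha_x]}$ in Proposition~\ref{PoissonEmbedding} is by definition the projector $Q^*_{[\alpha_x]}$ onto $\big(T_{[\alpha_x]}(T^*M)_\nu\big)^o$ associated with the splitting of $T^*_{\varphi_{\lambda_\nu}[\alpha_x]}(T^*(M/G_\nu))$ written in the displayed equations above, and similarly $\widetilde q_v$ is $q^*_{[\bar\alpha_x]}$.

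Concretely, the steps I would carry out are: (1) invoke Theorem~\ref{StandardReductionThm} to get $\varphi_{\lambda_\nu}$ and $\varphi^V_{\lambda_\nu}$ together with the fact that they are symplectic principal $\Rr$-bundle embeddings, so that Proposition~\ref{PoissonEmbedding} is applicable; (2) observe that, by definition, the reduced Poisson structure $\Lambda_\nu$ on $(T^*M)_\nu$ (respectively $\bar\Lambda_\nu$ on $(V^*\pi)_\nu$) is the one induced by the reduced symplectic form $(\Omega_M)_\nu$ (respectively by the symplectic principal $\Rr$-bundle $(\mu_\pi)_\nu$), which is consistent by Proposition~\ref{twoPoissStructEqual} with the Poisson-reduced bracket; (3) substitute the identifications \eqref{r1} and \eqref{r2}, applied to the fibration $\pi^*_{1,\nu}:M/G_\nu\to\Rr$ and the closed $2$-form $\beta_{\lambda_\nu}$ on $M/G_\nu$, for the Poisson structures appearing on the right-hand side of the Proposition~\ref{PoissonEmbedding} formulas; (4) unwind the definitions of the projectors $\widetilde Q_a$ and $\widetilde q_v$ from Proposition~\ref{PoissonEmbedding} and match them with the $Q^*_{[\alpha_x]}$ and $q^*_{[\bar\alpha_x]}$ introduced just before the statement, noting in particular that because the magnetic term deforms $\sharp_{\Lambda}$ to $\sharp_{\Lambda+\beta_{\lambda_\nu}^v}$, the symplectic-orthogonal complement in \eqref{DirectSumDual} must be computed with respect to $\Omega_{M/G_\nu}-B_{\lambda_\nu}$, which is exactly what the displayed splittings before the theorem encode.

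The main obstacle is step (4): verifying that the projector $\widetilde Q_a$ of Proposition~\ref{PoissonEmbedding}, which is defined through the symplectic orthogonal $(T_aA)^{\Omega'}$ with respect to the symplectic form $\Omega'$ of the ambient space, coincides with $Q^*_{[\alpha_x]}$ when the ambient symplectic form is the magnetically deformed $\Omega_{M/G_\nu}-B_{\lambda_\nu}$ rather than the canonical $\Omega_{M/G_\nu}$. One must be careful that the relation \eqref{DirectSumDual}, and hence the projector, is intrinsic to the pair (embedded submanifold, ambient symplectic form), so it must be read off from $\Omega_{M/G_\nu}-B_{\lambda_\nu}$; this is precisely why the displayed splittings before the statement are written in terms of $\sharp_{\Lambda_{T^*(M/G_\nu)}+\beta_{\lambda_\nu}^v}$ and its base-space analogue. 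Once this bookkeeping is settled, there is no real computation left: the two formulas are Proposition~\ref{PoissonEmbedding} with the right-hand Poisson tensors rewritten via \eqref{r1} and \eqref{r2}. The compatibility of $\varphi^V_{\lambda_\nu}$ with the principal bundle structure, needed so that $\varphi^V_{\lambda_\nu}$ is the base embedding of a symplectic principal $\Rr$-bundle embedding, is already part of the conclusion of Theorem~\ref{StandardReductionThm}, so nothing new is required there.
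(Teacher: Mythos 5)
Your proposal is correct and follows essentially the same route as the paper: the paper's own argument is exactly to combine Theorem \ref{StandardReductionThm} with the embedding relation \eqref{LambdaRelation}/Proposition \ref{PoissonEmbedding}, after identifying the ambient Poisson tensors as $\Lambda_{T^*(M/G_\nu)}+\beta_{\lambda_\nu}^v$ and $\Lambda_{V^*\pi^*_{1,\nu}}+\bar\beta_{\lambda_\nu}^v$ via \eqref{r1} and \eqref{r2}, with the projectors $Q^*_{[\alpha_x]}$, $q^*_{[\bar\alpha_x]}$ read off from the splittings computed relative to the magnetically deformed symplectic form. Your step (4), insisting that the splitting \eqref{DirectSumDual} be taken with respect to $\Omega_{M/G_\nu}-B_{\lambda_\nu}$, is precisely the bookkeeping the paper encodes in the displayed decompositions preceding the statement.
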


\begin{ex}[{\it The bidimensional time-dependent damped harmonic oscillator}] \label{ex4.8}  
(see \cite{CaLuRa} and references therein).
This time-dependent mechanical system involves harmonic oscillators with time-dependent frequency or with time-dependent masses or subject to linear time-dependent damping forces. The configuration space is the manifold $\Rr^2\times \Rr$ fibered on $\Rr$ with respect to the surjective submersion $pr_2: \Rr^2\times \Rr\to \Rr$ and the corresponding restricted phase space of momenta is $V^*pr_2=T^*\Rr^2\times \Rr$. The Hamiltonian function $H:V^*pr_2=T^*\Rr^2\times \Rr\cong (\Rr^2\times \Rr^2)\times \Rr\to \Rr$ is given by 
$$H(q^1,q^2,p_1,p_2,t)=\frac{e^{\sigma(t)}}{2}(p_1^2 + p_2^2) + F(t)((q^1)^2 + (q^2)^2)$$
with $\sigma,F:\Rr\to \Rr$  real $C^\infty$-functions on $\Rr.$ 

We consider the action $\phi:S^1\times (\Rr^2\times \Rr)\to (\Rr^2\times \Rr)$ of $S^1$ on $\Rr^2\times \Rr$  given by 
\[
\phi_\theta(q^1,q^2,t)=(q^1\cos \theta + q^2 \sin \theta, -q^1\sin \theta + q^2\cos \theta, t ), \mbox{ for }\theta\in S^1
\]
which is not free. However, if one restricts this action to $(\Rr^2\setminus\{(0,0)\})\times \Rr\cong (S^1\times \Rr^+)\times \Rr$, $\phi$ is free and proper.  Then, in order to reduce the system,  we consider the second projection $\pi:(S^1\times \Rr^+)\times \Rr\to \Rr$ and the corresponding  symplectic $\Rr$-principal bundle 
\[
\mu_\pi:T^*(S^1\times \Rr^+)\times \Rr^2\to T^*(S^1\times \Rr^+)\times \Rr, \quad(\theta,r,p_\theta,p_r, t,p)\mapsto (\theta,r,p_\theta, p_r,t).
\]
A direct computation proves that $\pi\circ \phi_\theta=\pi$. Therefore,  $T^*\phi:S^1\times (T^*(S^1\times \Rr^+)\times\Rr^2)\to  T^*(S^1\times \Rr^+)\times\Rr^2$ is a canonical action. On the other hand, we have the momentum maps $J:T^*(S^1\times {\Rr}^+)\times \Rr^2\to\Rr$  and $J^{V^*\pi}:T^*(S^1\times {\Rr}^+)\times \Rr\to \Rr$  deduced from (\ref{Jcotangent}) whose explicit expressions are
\[
J(\theta,r,p_\theta,p_r,t,p)=p_\theta,\qquad J^{V^*\pi}(\theta,r,p_\theta,p_r,t)=p_\theta.
\]
If $\nu\in \Rr$, then the corresponding level sets may be expressed as
\[
J^{-1}(\nu)\cong S^1\times T^*\Rr^+\times \Rr^2,\qquad (J^{V^*\pi})^{-1}(\nu)\cong S^1\times T^*\Rr^+\times \Rr
\]
and, since the isotropy subgroup of $S^1$ at $\nu$ is again $S^1$, the reduced spaces are just 
\[
J^{-1}(\nu)/S^1\cong T^*(\Rr^+\times \Rr),\qquad (J^{V^*\pi})^{-1}(\nu)/S^1\cong  T^*\Rr^+\times \Rr.
\]
Finally, the Poisson structure on $(J^{V^*\pi})^{-1}(\nu)/S^1\cong  T^*\Rr^+\times \Rr$ is the one induced by the standard cosympletic structure. 
\end{ex}

\begin{ex}[{\it The time-dependent heavy top}]\label{ex4.9} (see \cite{LeRaSiMa} and references therein). 
This system consists of a rigid body with a fixed point moving in a time-dependent gravitational field. 

The configuration space for this mechanical system is the product manifold $SO(3)\times \Rr$ fibered on $\Rr$ by the second projection $\pi:SO(3)\times \Rr\to \Rr.$ Moreover, the phase space of momenta  $V^*\pi$ may be identified in a natural way with $(SO(3)\times \Rr^3)\times \Rr$ using the left trivialization of $T^*SO(3)$. Under this identification, the Hamiltonian function $H:(SO(3)\times \Rr)\times \Rr^3\to \Rr$ is given by 
\[
 H((A,t), \Pi)=\frac{1}{2} \langle \mathbb{I}^{-1}\Pi,\Pi\rangle  + \langle A^{-1}e_3,\gamma(t)\rangle,
\]
where $\mathbb{I}:\mathfrak{so}(3)\cong \Rr^3\to \mathfrak{so}^*(3)\cong \Rr^3$ is the inertial tensor of the body and $\gamma:\Rr\to\Rr^3$ is the time-dependent gravitational field. 

Now, we consider the closed subgroup of $SO(3)$ 
\[
 K=\left\{\left(\begin{array}{ccc} \cos \theta&\sin \theta&0\\-\sin \theta&\cos \theta&0\\0&0&1\end{array}\right)/\theta\in \Rr\right\}\cong S^1.
\]
If  $\{e_1,e_2,e_3\}$  is the canonical basis of $\mathfrak{so}(3)\cong \Rr^3$, then the Lie algebra associated with $K$ is just $\langle e_3\rangle$. 

In addition, we take the action of $K$ on $T^*(SO(3)\times \Rr)\cong (SO(3)\times \Rr)\times (\Rr^3\times \Rr)$ given by 
\[
 \phi(A_\theta,(A,t),\Pi,p)=((A_\theta A,t),\Pi,p), 
\]
with $A_\theta\in K$, $A\in SO(3)$, $t\in \Rr$ and $(\Pi,p)\in \Rr^3\times \Rr.$ 
This action is free and proper and $\pi$ is invariant with respect to it. 

Let $J:(SO(3)\times \Rr)\times (\Rr^3\times \Rr)\to \Rr$ be the momentum map deduced from \eqref{Jcotangent} whose explicit expression is 
\[
 J((A,t),\Pi,p)=A\Pi\cdot e_3.
\]
Now, for $\nu\in \Rr$, the isotropic subgroup $K_\nu$ is just $S^1$ and the level set $J^{-1}(\nu)$  is  
\[
 \{((A,t),(\Pi,p))\in (SO(3)\times \Rr)\times (\Rr^3\times \Rr)/A\Pi\cdot e_3=\nu\}.
\]

If we apply the cotangent bundle reduction using the principal connection $\lambda:T(SO(3))\to\Rr$ given by $\lambda(A,v)=(Av)\cdot e_3$, we obtain that the reduced symplectic manifold $J^{-1}(\nu)/K_\nu$ is diffeomorphic to $T^*(S^2\times \Rr)\cong T^*S^2\times \Rr^2$. The explicit diffeomorphism  $J^{-1}(\nu)/H_\nu \to T^*S^2\times \Rr^2$ is 
\[
 [(A,t),\Pi,p)]\to  ( (A^{-1}e_3)\times\Pi,(t,p)).
\]
Note that $T^*_{A^{-1}e_3}S^2 \cong \{u\in \Rr^3 / u\cdot (A^{-1}e_3)=0\}$. Moreover, the symplectic $2$-form on $T^*(S^2\times\Rr)$ is $\Omega_{S^2\times \Rr}-\nu\pi_{S^2\times \Rr}^*(\omega_{S^2})$, where $\Omega_{S^2\times \Rr}$ is the canonical symplectic structure on $T^*(S^2\times \Rr)$, $\pi_{S^2\times \Rr}:T^*(S^2\times \Rr)\to S^2\times \Rr$ is the canonical projection and $\omega_{S^2}$ is the symplectic area on $S^2$, i.e. 
\[
 \omega_{S^2}(x)(u,v)=-x\cdot (u\times v),\quad\forall\,x\in S^2,\,u,v\in T_xS^2\subseteq T_x\Rr^3\cong \Rr^3.
\]

On the other hand, on $V^*\pi=(SO(3)\times \Rr)\times \Rr^3,$ the Poisson momentum map $J^{V^*\pi}: (SO(3)\times \Rr)\times \Rr^3\to \Rr$ is given by
\[
 J^{\pi^*V}((A,t),\Pi)=A\Pi\cdot e_3.
\]
The level set $(J^{V^*\pi})^{-1}(\nu)$ (respectively, the reduced space $J^{V^*\pi}(\nu)/K_\nu$) is diffeomorphic to $(SO(3)\times \Rr)\times \Rr^2$ (respectively, to  $T^*S^2\times \Rr$). In order to describe the Poisson structure on this reduced space, we consider the symplectic $2$-form $\bar{\Omega}=\Omega_{S^2} - \nu \pi_{S^2}^*(\omega_{S^2})$ on $T^*S^2\times \Rr$, where $\Omega_{S^2}$ is the canonical symplectic structure of $T^*S^2$ and $\pi_{S^2}:T^*S^2\to S^2$ the canonical projection. Then, the Poisson structure on $T^*S^2\times \Rr$ is just the Poisson structure induced by $pr_1^*(\bar\Omega)$, where $pr_1:T^*S^2\times \Rr\to T^*S^2$ is the canonical projection on the first factor.
\end{ex}

\section{Non-autonomous Hamiltonian reduction}

\subsection{Non-autonomous Hamiltonian systems}\label{section5.1}
In this section, we will extend the example in Section \ref{section2} for a symplectic principal $\Rr$-bundle 
in the presence of a Hamiltonian section. Let $\mu:A\to V$ be a principal $\Rr$-bundle with 
principal action $\psi:\Rr\times A\to A$ and infinitesimal generator $Z_\mu$.

It is well-known (see \cite{GrabGrabUrbAV}) that there exists a one-to-one 
correspondence between the space of the sections of 
$\mu$ and the set $\{F\in C^\infty(A)|\,Z_\mu(F)=1\}$. 
In fact, if $h:V\to A$ is a section of $\mu$, there is a unique function 
$F_h:A\to\Rr$ such that
\begin{equation}\label{DefinitionFh}
 a=\psi\left(F_h(a),h(\mu(a))\right),\qquad\mbox{for any }a\in A.
\end{equation}
 
The following result will be useful in the sequel.
\begin{lem}\label{FhCompatibility}
Let $\mu:A\to V$ be a principal $\Rr$-bundle with principal 
action $\psi:\Rr\times A\to A$ and $h:V\to A$ a section 
of $\mu$. Then
\begin{align}
 F_h(h(v))        &= 0,         \qquad        
    &\mbox{for any }v\in V, \label{FhCirch}\\
 F_h(\psi(s,a))   &= s + F_h(a),\qquad        
    &\mbox{for any }s\in\Rr,\ a\in A. \label{FhWithPrincipalAction}
\end{align}
\end{lem}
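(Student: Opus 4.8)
The plan is to prove both identities directly from the defining relation \eqref{DefinitionFh}, which characterizes $F_h$ as the unique real number such that $a = \psi(F_h(a), h(\mu(a)))$, together with the fact that $\psi$ is an action of $(\Rr,+)$ and that $h$ is a section of $\mu$ (so $\mu\circ h = \mathrm{id}_V$).

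First I would establish \eqref{FhCirch}. Fix $v\in V$ and set $a = h(v)$. Since $h$ is a section, $\mu(a) = \mu(h(v)) = v$, so $h(\mu(a)) = h(v) = a$. Plugging into \eqref{DefinitionFh} gives $a = \psi(F_h(a), a)$, i.e.~$\psi_{F_h(h(v))}$ fixes the point $h(v)$. Because the principal action $\psi$ is free, the only group element fixing any point is the identity $0\in\Rr$; hence $F_h(h(v)) = 0$. (Alternatively, one may invoke the uniqueness clause in \eqref{DefinitionFh}: the value $s=0$ works since $a = \psi(0, h(\mu(a)))$, so by uniqueness $F_h(a)=0$.)

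Next I would prove \eqref{FhWithPrincipalAction}. Fix $s\in\Rr$ and $a\in A$, and write $b = \psi(s,a) = \psi_s(a)$. Since $\mu$ is constant on $\Rr$-orbits, $\mu(b) = \mu(\psi_s(a)) = \mu(a)$, so $h(\mu(b)) = h(\mu(a))$. Now starting from $a = \psi(F_h(a), h(\mu(a)))$ and applying $\psi_s$, and using that $\psi$ is an action so that $\psi_s\circ\psi_{F_h(a)} = \psi_{s + F_h(a)}$, we get
\[
 b = \psi_s(a) = \psi_s\bigl(\psi_{F_h(a)}(h(\mu(a)))\bigr) = \psi_{s+F_h(a)}\bigl(h(\mu(a))\bigr) = \psi_{s+F_h(a)}\bigl(h(\mu(b))\bigr).
\]
By the uniqueness in the characterization \eqref{DefinitionFh} applied to the point $b$, this forces $F_h(b) = s + F_h(a)$, which is exactly \eqref{FhWithPrincipalAction}.

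There is no real obstacle here; the lemma is a direct unwinding of definitions. The only point requiring a little care is the appeal to uniqueness (equivalently, freeness of the principal action): one must note that for a principal $\Rr$-bundle the stabilizers are trivial, so the element realizing the relation in \eqref{DefinitionFh} is genuinely unique, which is what lets us conclude the displayed identities rather than merely obtaining them modulo the stabilizer. I would state this once and use it in both parts.
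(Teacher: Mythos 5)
Your proof is correct and follows essentially the same route as the paper: both parts are obtained by plugging into the defining relation \eqref{DefinitionFh} and invoking freeness of the principal action (equivalently, the uniqueness of the element realizing that relation). Your explicit remark about why uniqueness holds is a nice touch but does not change the substance of the argument.
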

\begin{proof}
For any $v\in V$, we have (see \eqref{DefinitionFh})
\[
 h(v) = \psi\big( F_h(h(v)) , h(\mu(h(v))) \big) 
      = \psi(F_h(h(v)),h(v)).
\]
Thus,  since $\psi$ is a free action, we deduce that $F_h(h(v))=0$. 

Moreover, from \eqref{DefinitionFh}, for any $s\in\Rr$ and $a\in A$
\begin{equation}\label{FhCompFormula1}
 \psi(s,a) = \psi\big( F_h(\psi(s,a)) , h(\mu(\psi(s,a))) \big) 
           = \psi\big( F_h(\psi(s,a)) , h(\mu(a)) \big). 
\end{equation}
On the other hand, using again \eqref{DefinitionFh}, it follows that 
\begin{equation}\label{FhCompFormula2}
 \psi(s,a) = \psi\big( s, \psi( F_h(a) , h(\mu(a)) ) \big)
           = \psi\big( s + F_h(a) , h(\mu(a))  \big). 
\end{equation}
Comparing  \eqref{FhCompFormula1} and 
\eqref{FhCompFormula2}, we obtain 
\eqref{FhWithPrincipalAction}.
\end{proof}

Using \eqref{FhWithPrincipalAction}, we deduce that 
 $\psi_s^*(\d F_h)=\d F_h,$ for any $s\in \Rr$ and since $\d F_h(Z_\mu)=1$ it follows that $\d F_h:TA\to \Rr$ is the 
connection $1$-form of a principal connection on the principal $\Rr$-bundle $\mu:A\to V$ 
(see \cite{GrabGrabUrbAV}). 

The horizontal subbundle associated with the principal connection is 
\[
 a\in A\mapsto {H}_a^h=\{X\in T_aA|X(F_h)=0\}\subseteq T_aA
\]
and thus 
\begin{equation}\label{Decom} 
T_aA={ H}_a^h\oplus V_a\mu={H}_a^h\oplus \langle Z_\mu(a) \rangle.
\end{equation}
Note that 
\[
 T_{h(\mu(a))}\mu(T_{\mu(a)}h\circ T_a\mu)(X)=(T_a\mu)(X)
\]
and, moreover, from \eqref{FhCirch}
\[
 \{(T_{\mu(a)}h\circ T_a\mu)(X)\}(F_h)=0.
\]
This implies that 
\[
 (T_{h(\mu(a))}\psi_{F_h(a)})((T_{\mu(a)}h\circ T_a\mu)(X))=X-X(F_h)Z_\mu(a)
\]
and, therefore, the horizontal projector $\hor_a^h:T_aA\to H_a^h$ is given by 
\begin{equation} \label{projector-h}
 \hor_a^h(X)=(T_{h(\mu(a))}\psi_{F_h(a)}\circ T_{\mu(a)}h\circ T_a\mu)(X).
\end{equation}

\begin{defn}
 A \emph{non-autonomous Hamiltonian system} 
$(A,\mu,\Omega,h)$ is a symplectic principal $\Rr$-bundle 
$\mu:(A,\Omega)\to V$ endowed with a section $h:V\to A$ 
of $\mu$, i.e.~a smooth map such that $\mu\circ h=id_V$.

The section $h:V\to A$ is called the \emph{Hamiltonian 
section of the system}.
\end{defn}

In this subsection we will prove that, given a non-autonomous
Hamiltonian system $(A,\mu,\Omega,h)$, the base manifold
$V$ of the principal $\Rr$-bundle $\mu$ may be equipped with a cosymplectic structure.

\begin{prop}\label{HaFhProjectable}
Let $(A,\mu,\Omega,h)$ be a non-autonomous Hamiltonian system.
Then the Hamiltonian vector field $\Ha_{F_h}\in\X(A)$ of the 
function $F_h$ associated to a Hamiltonian section 
$h:V\to A$ is $\mu$-projectable to a vector field $\Ree_h$ on $V$. 
\end{prop}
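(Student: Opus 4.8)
The plan is to show that the Hamiltonian vector field $\Ha_{F_h}$ is invariant under the principal action $\psi$; since the fibres of the principal $\Rr$-bundle $\mu$ are exactly the $\Rr$-orbits, $\mu$-projectability will follow at once. Equivalently, I will prove that $[Z_\mu,\Ha_{F_h}]=0$, where $Z_\mu$ is the infinitesimal generator of $\psi$.

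First I record the two ingredients. Since $\mu:(A,\Omega)\to V$ is a symplectic principal $\Rr$-bundle, the action $\psi$ is symplectic, so $\psi_s^*\Omega=\Omega$ for all $s$ and hence $\lie_{Z_\mu}\Omega=0$. On the other hand, differentiating \eqref{FhWithPrincipalAction} with respect to $s$ at $s=0$ gives $Z_\mu(F_h)=1$ (this is also the content of the fact, noted before the statement, that $\d F_h$ is a principal connection $1$-form with $\d F_h(Z_\mu)=1$). Now, using the identity $\lie_{Z_\mu}(i_{\Ha_{F_h}}\Omega)=i_{[Z_\mu,\Ha_{F_h}]}\Omega+i_{\Ha_{F_h}}\lie_{Z_\mu}\Omega$, the defining relation $i_{\Ha_{F_h}}\Omega=\d F_h$, and $\lie_{Z_\mu}\Omega=0$, I obtain
\[
 i_{[Z_\mu,\Ha_{F_h}]}\Omega=\lie_{Z_\mu}(\d F_h)=\d\big(Z_\mu(F_h)\big)=\d(1)=0 .
\]
Nondegeneracy of $\Omega$ then forces $[Z_\mu,\Ha_{F_h}]=0$.

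Finally I pass to the quotient. From $[Z_\mu,\Ha_{F_h}]=0$ the flows of $Z_\mu$ and $\Ha_{F_h}$ commute, so $(\psi_s)_*\Ha_{F_h}=\Ha_{F_h}$ for every $s\in\Rr$. If $a,a'$ lie in the same fibre of $\mu$, write $a'=\psi_s(a)$; then, using $\mu\circ\psi_s=\mu$,
\[
 T_{a'}\mu\big(\Ha_{F_h}(a')\big)=T_{a'}\mu\big(T_a\psi_s(\Ha_{F_h}(a))\big)=T_a(\mu\circ\psi_s)\big(\Ha_{F_h}(a)\big)=T_a\mu\big(\Ha_{F_h}(a)\big).
\]
Hence the assignment $v\mapsto T_a\mu(\Ha_{F_h}(a))$, for any $a\in\mu^{-1}(v)$, is a well-defined map $\Ree_h:V\to TV$ satisfying $T\mu\circ\Ha_{F_h}=\Ree_h\circ\mu$; composing $T\mu\circ\Ha_{F_h}$ with a local smooth section of the surjective submersion $\mu$ shows $\Ree_h$ is smooth, so it is the required vector field on $V$.

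The computation itself is short; the only points that require a little care are the precise equivalence between $\mu$-projectability and $\psi$-invariance (which relies on $\mu$ being a surjective submersion with connected fibres), and the fact that the outcome does not depend on the sign convention chosen in $i_{\Ha_{F_h}}\Omega=\pm\,\d F_h$, since in the end only $\d(\text{const})=0$ is used.
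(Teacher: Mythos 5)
Your proof is correct and follows essentially the same route as the paper: both arguments reduce the statement to $[Z_\mu,\Ha_{F_h}]=0$, obtained from $Z_\mu(F_h)=1$ together with the invariance of $\Omega$ under $\psi$. The only cosmetic difference is that you derive the vanishing bracket by Cartan calculus ($\lie_{Z_\mu}(i_{\Ha_{F_h}}\Omega)=\d(Z_\mu(F_h))=0$) whereas the paper uses a local Hamiltonian $\tau$ for $Z_\mu$ and the identity $[\Ha_\tau,\Ha_{F_h}]=-\Ha_{\pbr{\tau,F_h}_A}$ with $\pbr{\tau,F_h}_A=-1$; your explicit passage to the quotient via the fibre-equals-orbit structure is a fine (indeed slightly more detailed) version of the paper's appeal to the standard projectability criterion.
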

\begin{proof}
Since the infinitesimal generator $Z_\mu$ is 
locally Hamiltonian, for any $a\in A$, there exists a function
$\tau$ defined on an open neighbourhood $U$ of $a$ such that
the restriction of $Z_\mu$ to $U$ is the Hamiltonian vector field of $\tau$. 
Using the definition of $F_h$, we have that on $U$
\[
 \pbr{\tau,F_h}_A = -\Ha_\tau(F_h) = -Z_\mu(F_h) = -1,
\]
$\pbrEm_A$ being the Poisson bracket on $A$ induced by the
symplectic form $\Omega$. As a consequence,
\[
 \lie_{Z_\mu}\Ha_{F_h} = \br{\Ha_\tau,\Ha_{F_h}}
   = -\Ha_{\pbr{\tau,F_h}_A} = 0.
\]
Thus, the Lie derivative of $\Ha_{F_h}$ with respect to any 
vertical vector field is again vertical. This is a sufficient
(and necessary) condition to ensure the $\mu$-projectability
of $\Ha_{F_h}$.
\end{proof}

The $\mu$-projection $\Ree_h$ of $\Ha_{F_h}$ is a vector field
on $V$, which describes the Hamiltonian dynamics of the 
non-autonomous Hamiltonian system $(A,\mu,\Omega,h)$, 
as we will see in what follows.

\begin{prop}\label{FormulaOmegaOmegah}
Let $(A,\mu,\Omega,h)$ be a non-autonomous Hamiltonian system. 
If $\omega_h\in\Omega^2(V)$ and $\eta_h\in\Omega^1(V)$ are defined by
\begin{equation}\label{DefOmegahAndEtah}
 \omega_h=h^*\Omega,\qquad \eta_h=-h^*(i_{Z_\mu}\Omega),
\end{equation}
then 
\begin{equation}\label{OmegaOmegah}
 \Omega=\mu^*\omega_h-\d F_h\wedge\mu^*\eta_h
\end{equation} 
and 
\begin{equation}\label{muStarEta}
 \mu^*\eta_h=-i_{Z_\mu}\Omega. 
\end{equation}
\end{prop}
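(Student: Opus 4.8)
The plan is to verify \eqref{muStarEta} first and then derive \eqref{OmegaOmegah} from it using the decomposition \eqref{Decom} of the tangent spaces. The first identity is essentially definitional: since $\eta_h = -h^*(i_{Z_\mu}\Omega)$ and since the $1$-form $i_{Z_\mu}\Omega = \zeta_\mu$ is the connection-type object we have already analyzed, I would show that $i_{Z_\mu}\Omega$ is a basic $1$-form for $\mu$, i.e.\ it is $\mu$-projectable. Indeed $i_{Z_\mu}(i_{Z_\mu}\Omega) = 0$ because $\Omega$ is skew, and $\lie_{Z_\mu}(i_{Z_\mu}\Omega) = i_{Z_\mu}\lie_{Z_\mu}\Omega + i_{[Z_\mu,Z_\mu]}\Omega = 0$ since $\psi$ is symplectic (so $\lie_{Z_\mu}\Omega = 0$). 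Hence there is a unique $1$-form $\theta$ on $V$ with $\mu^*\theta = -i_{Z_\mu}\Omega$; applying $h^*$ and using $\mu\circ h = \mathrm{id}_V$ gives $\theta = -h^*(i_{Z_\mu}\Omega) = \eta_h$, which is exactly \eqref{muStarEta}.

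For \eqref{OmegaOmegah}, write $\Theta := \mu^*\omega_h - \d F_h\wedge\mu^*\eta_h$ and show $\Theta = \Omega$ by evaluating both sides on pairs of tangent vectors decomposed according to $T_aA = H_a^h \oplus \langle Z_\mu(a)\rangle$. There are three cases. On two vertical vectors both sides vanish trivially. For $X \in H_a^h$ (so $X(F_h)=0$) and $Z_\mu(a)$: the left side is $i_{Z_\mu}\Omega$ evaluated on $X$, while on the right, $\mu^*\omega_h(X,Z_\mu) = 0$ since $Z_\mu$ is $\mu$-vertical, and $(\d F_h\wedge\mu^*\eta_h)(X,Z_\mu) = \d F_h(X)\,\mu^*\eta_h(Z_\mu) - \d F_h(Z_\mu)\,\mu^*\eta_h(X) = -\mu^*\eta_h(X)$ using $\d F_h(Z_\mu)=1$ and $\d F_h(X)=0$; by \eqref{muStarEta} this equals $i_{Z_\mu}\Omega(X)$, matching. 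The decisive case is two horizontal vectors $X,Y\in H_a^h$: here $\d F_h\wedge\mu^*\eta_h$ contributes nothing, so I must check $\mu^*\omega_h(X,Y) = \Omega(X,Y)$. This is where the horizontal projector \eqref{projector-h} does the work: since $X$ is horizontal, $X = \hor_a^h(X) = T_{h(\mu(a))}\psi_{F_h(a)}(T_{\mu(a)}h(T_a\mu\, X))$, and similarly for $Y$; because $\psi_{F_h(a)}$ is a symplectomorphism and $\omega_h = h^*\Omega$, we get $\Omega(X,Y) = \Omega\big(T h(T_a\mu\,X), Th(T_a\mu\,Y)\big) = \omega_h(T_a\mu\,X, T_a\mu\,Y) = (\mu^*\omega_h)(X,Y)$.

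The main obstacle is this last step: one must be careful that $\Omega$ is genuinely invariant under $\psi_s$ (which holds since the action is symplectic, so $\psi_{F_h(a)}^*\Omega = \Omega$ pointwise in the appropriate sense), and that applying $Th$ after $T\mu$ lands one back — up to the vertical correction killed by horizontality — on the original horizontal vectors. Once the three cases are checked on a spanning set of $T_aA\wedge T_aA$, bilinearity and skew-symmetry give $\Theta=\Omega$ everywhere, completing the proof. I would also remark that \eqref{OmegaOmegah} immediately recovers \eqref{muStarEta} by contracting with $Z_\mu$, so the two displayed formulas are consistent.
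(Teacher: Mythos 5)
Your proof is correct and follows essentially the same route as the paper: both establish \eqref{muStarEta} first and then verify \eqref{OmegaOmegah} by splitting $T_aA=H_a^h\oplus\langle Z_\mu(a)\rangle$ and using the projector \eqref{projector-h} together with the $\psi$-invariance of $\Omega$. The only variations are that you obtain \eqref{muStarEta} by observing that $i_{Z_\mu}\Omega$ is horizontal and $\psi$-invariant, hence basic, and then identifying its projection via $h^*\circ\mu^*=\mathrm{id}$ (the paper instead checks the identity directly on the vertical and horizontal summands), and a small sign slip in the mixed case: with the ordering $(X,Z_\mu)$ one has $\Omega(X,Z_\mu)=-(i_{Z_\mu}\Omega)(X)$, which is exactly what the right-hand side gives once the overall minus sign in front of $\d F_h\wedge\mu^*\eta_h$ is taken into account, so the conclusion stands.
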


\begin{proof}
First of all, we will see that \eqref{muStarEta} holds. It is clear that 
\begin{equation}\label{vertical}
(\mu^*\eta_h)(Z_\mu)=-(i_{Z_\mu}\Omega)(Z_\mu)=0.
\end{equation}
On the other hand, from \eqref{projector-h} and since 
$\mu\circ \psi_s=\mu$ and $\mu\circ h=id$, it follows that 
\[
 (\mu^*\eta_h)(a)(\hor_a^h(X))=-[h^*(i_{Z_\mu}\Omega)](\mu(a))((T_a\mu)(X)),
\]
for $a\in A$ and $X\in T_aA.$ 

Thus, since $\psi$ is a symplectic action, we obtain that 
\[
 \mu^*(\eta_h)(a)(\hor_a^h(X))=-(i_{Z_\mu}\Omega)(a)(\hor_a^h(X)).
\] 
This, using \eqref{Decom} and \eqref{vertical}, proves \eqref{muStarEta}. 

Next, we will see that \eqref{OmegaOmegah} holds. 

From \eqref{muStarEta}, we deduce that 
\begin{equation}\label{5.7.1}
i_{Z_\mu}\Omega=i_{Z_\mu}(\mu^*\omega_h-\d F_h\wedge \mu^*\eta_h).
\end{equation}
On the other hand, using \eqref{projector-h} and \eqref{DefOmegahAndEtah} and the fact 
that $\psi$ is a symplectic action, we have that 
\begin{equation}\label{5.7.2}
 \begin{split}
  \Omega(a)(\hor_a^h(X), \hor_a^h(Y)) &= (\mu^*\omega_h)(a)(X,Y) \\
           &= (\mu^*\omega_h)(a)(\hor_a^h(X), \hor_a^h(Y)) \\
           &= (\mu^*\omega_h-\d F_h\wedge \mu^*\eta_h)(\hor_a^h(X), \hor_a^h(Y))
 \end{split}
\end{equation}
for $a\in A$ and $X,Y\in T_aA.$ 

Therefore, from \eqref{Decom}, \eqref{5.7.1} and \eqref{5.7.2}, we deduce \eqref{OmegaOmegah}. 
\end{proof}

Now, we may prove the following result.
\begin{thm}\label{CosymplStructureOnV}
Let $(A,\mu,\Omega,h)$ be a non-autonomous Hamiltonian 
system with infinitesimal generator $Z_\mu$. 
If 
$\omega_h$ and $\eta_h$ are the $1$-form and the $2$-form respectively, on $V$ defined by 
\eqref{DefOmegahAndEtah}, 
then $(V,\omega_h,\eta_h)$ is a cosymplectic manifold. 
The Reeb vector field of the cosymplectic structure on $V$ 
is just $\Ree_h$.
\end{thm}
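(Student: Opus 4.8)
The plan is to verify directly that the pair $(\omega_h, \eta_h)$ on $V$ satisfies the defining conditions of a cosymplectic structure on the $(2n+1)$-dimensional manifold $V$ (here $\dim A = 2n+2$), namely that $\omega_h$ is closed, $\eta_h$ is closed, and $\eta_h \wedge \omega_h^n$ is a volume form; then to identify the Reeb vector field. Closedness of $\omega_h = h^*\Omega$ and of $\eta_h = -h^*(i_{Z_\mu}\Omega)$ follows immediately: $\d\omega_h = h^*(\d\Omega) = 0$, and since $Z_\mu$ is locally Hamiltonian we have $\lie_{Z_\mu}\Omega = 0$, so $\d(i_{Z_\mu}\Omega) = \lie_{Z_\mu}\Omega - i_{Z_\mu}\d\Omega = 0$, whence $\d\eta_h = -h^*\d(i_{Z_\mu}\Omega) = 0$. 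The substantive point is the nondegeneracy condition $\eta_h \wedge \omega_h^n \neq 0$.

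For the nondegeneracy I would exploit the decomposition \eqref{OmegaOmegah}, $\Omega = \mu^*\omega_h - \d F_h \wedge \mu^*\eta_h$, from Proposition \ref{FormulaOmegaOmegah}. Pulling back along $h$ and using $\mu \circ h = id_V$ and $h^*(\d F_h) = \d(F_h \circ h) = 0$ (by \eqref{FhCirch}) recovers $h^*\Omega = \omega_h$ consistently; more usefully, taking the $(n+1)$-st exterior power of \eqref{OmegaOmegah} gives $\Omega^{n+1} = -(n+1)\,\d F_h \wedge \mu^*\eta_h \wedge \mu^*\omega_h^n = -(n+1)\,\d F_h \wedge \mu^*(\eta_h \wedge \omega_h^n)$, since $(\d F_h)^2 = 0$ kills all other terms and $\mu^*\omega_h^{n+1} = 0$ as $\omega_h^{n+1}$ is an $(2n+2)$-form on the $(2n+1)$-manifold $V$. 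Because $\Omega$ is symplectic, $\Omega^{n+1}$ is nowhere zero; since $\d F_h(Z_\mu) = 1 \neq 0$ and $\mu^*(\eta_h \wedge \omega_h^n)$ is $\mu$-basic, this forces $\eta_h \wedge \omega_h^n$ to be nowhere zero on $V$. Thus $(V, \omega_h, \eta_h)$ is a cosymplectic manifold. The main obstacle is keeping the bookkeeping of which forms are basic versus which involve $\d F_h$ straight — i.e.\ being careful that $\mu^*\eta_h \wedge \mu^*\omega_h^n$ really is the pullback of a nonzero top-degree form on $V$ and that no extra terms survive; but this is a routine degree count once \eqref{OmegaOmegah} is in hand.

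It remains to identify the Reeb vector field of $(\omega_h, \eta_h)$, which is the unique vector field $\Ree$ on $V$ with $i_\Ree \omega_h = 0$ and $i_\Ree \eta_h = 1$. The claim is that this is the vector field $\Ree_h$ from Proposition \ref{HaFhProjectable}, the $\mu$-projection of $\Ha_{F_h}$. To see this I would project the identity $i_{\Ha_{F_h}}\Omega = \d F_h$ using \eqref{OmegaOmegah}: writing $X = \hor_a^h(\Ha_{F_h})$ and noting $\Ha_{F_h} = X + \Ha_{F_h}(F_h)\,Z_\mu$ via \eqref{Decom}, with $\Ha_{F_h}(F_h) = 0$ so in fact $\Ha_{F_h}$ is already horizontal, one computes $i_{\Ha_{F_h}}(\mu^*\omega_h - \d F_h \wedge \mu^*\eta_h) = \mu^*(i_{\Ree_h}\omega_h) - (i_{\Ha_{F_h}}\d F_h)\mu^*\eta_h + \d F_h \cdot \mu^*(i_{\Ree_h}\eta_h)$, using that $\Ha_{F_h}$ and $\Ree_h$ are $\mu$-related. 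Since $i_{\Ha_{F_h}}\d F_h = \Ha_{F_h}(F_h) = 0$, this equals $\mu^*(i_{\Ree_h}\omega_h) + (\mu^*(i_{\Ree_h}\eta_h))\,\d F_h$, and comparing with $\d F_h$ (and using $\Omega$-nondegeneracy, or separating the $\d F_h$-component from the $\mu$-basic component via \eqref{Decom}) yields $i_{\Ree_h}\eta_h = 1$ and $i_{\Ree_h}\omega_h = 0$. Hence $\Ree_h$ is exactly the Reeb vector field, completing the proof.
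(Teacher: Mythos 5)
Your proof is correct and follows essentially the same route as the paper's: both rest on Proposition \ref{FormulaOmegaOmegah} (the decomposition $\Omega=\mu^*\omega_h-\d F_h\wedge\mu^*\eta_h$ together with $\mu^*\eta_h=-i_{Z_\mu}\Omega$), both extract nondegeneracy from the identity $\Omega^{n+1}=c\,(\mu^*\omega_h)^n\wedge\d F_h\wedge\mu^*\eta_h$, and both identify $\Ree_h$ by contracting that decomposition with $\Ha_{F_h}$ and separating the $\d F_h$-component from the $\mu$-basic one. The only (harmless) variations are that you prove closedness by pulling back along $h$ directly, and you read $\eta_h\wedge\omega_h^n\neq 0$ straight off the $\Omega^{n+1}$ identity, which is slightly more streamlined than the paper's intermediate rank argument.
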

\begin{proof}
From \eqref{OmegaOmegah} and \eqref{muStarEta} and since $\Omega$ is closed and ${\mathcal L}_{Z_\mu}\Omega=0$, we deduce that $\omega_h$ and $\eta_h$ are closed.

Now, using \eqref{muStarEta} and Proposition \ref{FormulaOmegaOmegah}, we have that 
\begin{equation}\label{5.10'}
 \eta_h(\Ree_h)=(\mu^*\eta_h)({\mathcal H}_{F_h})=(i_{{\mathcal H}_{F_h}}\Omega)(Z_\mu)=1.
\end{equation}
On the other hand, from \eqref{OmegaOmegah} and Proposition \ref{FormulaOmegaOmegah}, it follows that 
\[
 \mu^*(i_{\Ree_h}\omega_h)=i_{{\mathcal H}_{F_h}}(\Omega + \d F_h\wedge \mu^*\eta_h).
\]
Thus, using \eqref{5.10'}, we obtain that $\mu^*(i_{\Ree_h}\omega_h)=0$ which implies that 
\begin{equation}\label{5.10''}
i_{\Ree_h}\omega_h=0.
\end{equation}
Next, suppose that $\dim A=2n+2.$ Then, from \eqref{5.10''}, we deduce that $\mbox{ rank}(\omega_h)\leq 2n$. 
Therefore, using \eqref{FormulaOmegaOmegah}, it follows that 
\[
 0\not=\Omega^{n+1}=c(\mu^*\omega_h)^n\wedge \d F_h\wedge \mu^*\eta_h,\mbox{ with } c\in \Rr, \;\; c\not=0.
\]
Consequently, the rank of $\mu^*\omega_h$ is $2n$ and we have that 
\begin{equation}\label{5.10'''}
\mbox{ rank }\omega_h=2n.
\end{equation}
Conditions \eqref{5.10'}, \eqref{5.10''} and  \eqref{5.10'''} imply that $\eta_h\wedge \omega_h^n\not=0.$
\end{proof}

The cosymplectic structure $(\omega_h,\eta_h)$ on $V$ 
defined on the base manifold $V$ of a non-autonomous 
Hamiltonian system $(A,\mu,\Omega,h)$ induces a Poisson 
structure $\pbrEm_h$ on $V$. 
On the other hand,  as we know (see Proposition \ref{PoissonInducedByMu}), $V$ 
is equipped with a  Poisson 
structure $\pbrEm_V$ in such a way that $\mu$ is a Poisson map.
The next result shows that the Poisson brackets $\pbrEm_h$  and $\pbrEm_V$  are equal. 

\begin{prop}\label{PoissonCosymplOnV}
Let $(A,\mu,\Omega,h)$ be a non-autonomous Hamiltonian system, $\pbrEm_h$ 
the Poisson bracket on $V$ associated with the cosymplectic structure $(\omega_h,\eta_h)$ 
and $\pbrEm_V$ the Poisson bracket on $V$ induced by the symplectic principal $\Rr$-bundle structure. Then, $\pbrEm_h=\pbrEm_V.$ 
\end{prop}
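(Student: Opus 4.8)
The plan is to compute both Poisson brackets on a pair of functions $f,f'\in C^\infty(V)$ at an arbitrary point $v\in V$ and show they agree. Since $\mu$ is a surjective submersion, it suffices to check the identity after composing with $\mu$; equivalently, pick $a\in A$ with $\mu(a)=v$ and evaluate everything at $a$. First I would recall the two descriptions. On one hand, $\pbrEm_V$ is characterized by \eqref{muPoissonMap}, so $\pbr{f,f'}_V\circ\mu=\pbr{f\circ\mu,f'\circ\mu}_A=\Omega(\Ha_{f\circ\mu},\Ha_{f'\circ\mu})$, the Hamiltonian vector fields being taken with respect to $\Omega$ on $A$. On the other hand, the cosymplectic bracket $\pbrEm_h$ on $V$ is, by the definition recalled in Appendix \ref{MWRed}, given by $\pbr{f,f'}_h=\omega_h(X_f^h,X_{f'}^h)$ where $X_f^h$ is the unique vector field on $V$ satisfying $i_{X_f^h}\omega_h=\d f-(\Ree_h(f))\eta_h$ and $\eta_h(X_f^h)=0$ (the "cosymplectic Hamiltonian" vector field). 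So the task is to relate these two constructions through $\mu$, using the key structural identity \eqref{OmegaOmegah}, namely $\Omega=\mu^*\omega_h-\d F_h\wedge\mu^*\eta_h$, together with \eqref{muStarEta}, $\mu^*\eta_h=-i_{Z_\mu}\Omega$.

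The central computational step is to identify the $\mu$-projection of the Hamiltonian vector field $\Ha_{f\circ\mu}$ on $A$. I would write $\Ha_{f\circ\mu}$ using the horizontal decomposition \eqref{Decom}, $\Ha_{f\circ\mu}=\hor^h(\Ha_{f\circ\mu})+\big(\d F_h(\Ha_{f\circ\mu})\big)Z_\mu$, and contract $\Omega$ in the form \eqref{OmegaOmegah} against it. Pairing with $Z_\mu$ and using $\d(f\circ\mu)(Z_\mu)=0$ (since $f\circ\mu$ is $\mu$-projectable) together with \eqref{muStarEta} gives $0=i_{Z_\mu}i_{\Ha_{f\circ\mu}}\Omega=-(\mu^*\eta_h)(\Ha_{f\circ\mu})$, hence $\eta_h\big(T\mu(\Ha_{f\circ\mu})\big)=0$; so the projected vector field lies in $\ker\eta_h$. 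Next, pairing $i_{\Ha_{f\circ\mu}}\Omega=\d(f\circ\mu)$ with an arbitrary horizontal vector and pushing forward by $T\mu$, the $\d F_h\wedge\mu^*\eta_h$ term drops on horizontal arguments and one obtains $i_{T\mu(\Ha_{f\circ\mu})}\omega_h=\d f-c\,\eta_h$ for the appropriate scalar $c=\Ree_h(f)$ — this is exactly the defining pair of conditions for $X_f^h$. Hence $T\mu(\Ha_{f\circ\mu})=X_f^h$. The analogous computation for $f'$ is identical.

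Once both Hamiltonian vector fields are shown to project correctly, I would conclude by direct substitution: $\pbr{f\circ\mu,f'\circ\mu}_A=\Omega(\Ha_{f\circ\mu},\Ha_{f'\circ\mu})$; expanding $\Omega$ via \eqref{OmegaOmegah} and noting that the $\d F_h\wedge\mu^*\eta_h$ term contributes a combination of $\d F_h$ and $\mu^*\eta_h$ evaluated on the two Hamiltonian vector fields, both of which vanish (the $\mu^*\eta_h$ factors vanish by the previous paragraph, and $\d F_h(\Ha_{f\circ\mu})$ pairs against $\mu^*\eta_h(\Ha_{f'\circ\mu})=0$), one is left with $(\mu^*\omega_h)(\Ha_{f\circ\mu},\Ha_{f'\circ\mu})=\big(\omega_h(X_f^h,X_{f'}^h)\big)\circ\mu=\pbr{f,f'}_h\circ\mu$. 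Comparing with \eqref{muPoissonMap} and cancelling $\mu$ by surjectivity yields $\pbr{f,f'}_V=\pbr{f,f'}_h$.

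The main obstacle I anticipate is the bookkeeping in the second paragraph: one must be careful that the scalar appearing when contracting $\Omega$ against $\Ha_{f\circ\mu}$ genuinely matches $\Ree_h(f)$, and that the horizontal/vertical splitting \eqref{Decom} interacts correctly with the wedge term $\d F_h\wedge\mu^*\eta_h$ — in particular using that $\d F_h$ annihilates the horizontal distribution while $\mu^*\eta_h$ annihilates $Z_\mu$. Everything else is a routine chase through \eqref{OmegaOmegah}, \eqref{muStarEta} and the definition of the cosymplectic bracket.
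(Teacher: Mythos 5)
Your proposal is correct and follows essentially the same route as the paper: both arguments identify the $\mu$-projection of $\Ha_{f\circ\mu}$ with the cosymplectic Hamiltonian vector field of $f$ by checking $\eta_h(T\mu(\Ha_{f\circ\mu}))=0$ via \eqref{muStarEta} and deriving $i_{T\mu(\Ha_{f\circ\mu})}\omega_h=\d f-\Ree_h(f)\eta_h$ from \eqref{OmegaOmegah}, with $\Ree_h(f)=-\d F_h(\Ha_{f\circ\mu})$. Only a small wording slip: the wedge term $\d F_h\wedge\mu^*\eta_h$ does not entirely ``drop'' on horizontal arguments --- it is precisely the source of the $-\Ree_h(f)\eta_h$ correction you correctly record --- and your final explicit evaluation of $\Omega(\Ha_{f\circ\mu},\Ha_{f'\circ\mu})$ is a harmless addition the paper leaves implicit.
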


\begin{proof}
Fix a real $C^\infty$-function $f$ on $V$. It is sufficient to prove that the Hamiltonian vector field $X_f$ on $V$ with respect to the Poisson bracket $\pbrEm_V$ is equal to the Hamiltonian vector field of $f$ with respect to the cosymplectic structure  $(\omega_h,\eta_h).$ Note that, since $\mu$ is Poisson map, it follows that the Hamiltonian vector field ${\mathcal H}_{f\circ \mu}\in {\mathfrak X}(A)$ is $\mu$-projectable and its projection is just $X_f$. Thus, from  \eqref{muStarEta}, we have
\begin{equation}\label{FormulaInPoissonCosymplOnV}
 \begin{split}
  \eta_h(X_{f}) &= \mu^*\eta_h(\Ha_{f\circ\mu}) 
    = -i_{Z_\mu}\Omega(\Ha_{f\circ\mu}) 
              =Z_\mu(f\circ\mu)=0.
 \end{split}
\end{equation}
On the other hand, using that ${\mathcal H}_{F_h}$ is $\mu$-projectable on $\Ree_h$, we deduce that 
\[
 \Ree_h(f)=\Ha_{F_h}(f\circ\mu)=-\d F_h(\Ha_{f\circ\mu}).
\]
Now, from \eqref{OmegaOmegah} and \eqref{FormulaInPoissonCosymplOnV}, 
it follows that 
\[
 \begin{split}
  (i_{X_{f}}\omega_h)({\mu(a)})(T_a\mu(\bar{Y})) 
     &= (\mu^*\omega_h)(a)(\Ha_{f\circ\mu}(a),\bar{Y}) \\
     &= \Omega(a)(\Ha_{f\circ\mu}(a),\bar{Y}) 
         + (\d F_h\wedge \mu^*\eta_h)(a)(\Ha_{f\circ\mu}(a),\bar{Y}) \\
     &= (\d(f\circ\mu))(a)(\bar{Y}) 
         + (\d F_h)(a)(\Ha_{f\circ\mu}(a))(\mu^*\eta_h)(a)(\bar{Y}) \\
     &= (\d f-\Ree_h(f)\eta_h )({\mu(a)})(T_a\mu(\bar{Y})),
 \end{split}
\]
for all $\bar{Y}\in T_aA$, with $a\in A$. Therefore, 
$$i_{X_f}\omega_h=df-\Ree(f)\eta_h.$$
This ends the proof of the result. 
\end{proof}

In what follows, we will prove that the integral curves of the vector field $\Ree_h$ satisfy 
local equations which are just the Hamilton equations. 
For this purpose, we will use canonical coordinates on the symplectic principal $\Rr$-bundle $\mu:(A,\Omega)\to M$ 
(see Theorem \ref{locExpr}). 

Let $(t,p,q^i,p_i)$ be canonical coordinates on $A$. 
Suppose that the local expression of the Hamiltonian 
section $h:V\to A$ is 
\[
 h(t,q^i,p_i)=(t,-H(t,q^j,p_j),q^i,p_i),
\]
where $H$ is a local function on $V$. Then, 
$F_h:A\to\Rr$ may be described locally by
\[
 F_h(t,p,q^i,p_i)=p+H(t,q^i,p_i).
\]
Since $(t,p,q^i,p_i)$ are Darboux coordinates for the symplectic
form $\Omega$ on $A$, we have that 
\[
 \begin{split}
  \Ha_{F_h} &= \pd{}{t} - \pd{H}{t}\pd{}{p} 
             + \pd{H}{p_i}\pd{}{q^i} - \pd{H}{q^i}\pd{}{p_i}, \\
   \Ree_h   &= \pd{}{t} + \pd{H}{p_i}\pd{}{q^i} 
             - \pd{H}{q^i}\pd{}{p_i}.
 \end{split}
\]
Finally, the cosymplectic structure $(\omega_h,\eta_h)$ on $V$ is 
locally described by
\[
 \omega_h = -\pd{H}{q^i}\d t\wedge\d q^i -\pd{H}{p_i}\d t\wedge\d p_i + \d q^i\wedge\d p_i,\qquad \eta_h = \d t.
\]

Thus, a curve on $V$ with local expression
\[
 t\mapsto (t,q^i(t),p_i(t))
\]
is an integral curve of $\Ree_h$ if and only if it satisfies the Hamilton equations 
$$ \frac{dq^i}{dt}=\frac{\partial H}{\partial p_i},\;\;\;  \frac{dp_i}{dt}=-\frac{\partial H}{\partial q^i}.$$
Therefore, in the particular case when $\mu$ is the standard  principal $\Rr$-bundle associated with a fibration $\pi:M\to \Rr,$ we recover the results in Section \ref{section2}.

\subsection{Non-autonomous reduction Theorem}

In this subsection we will obtain a reduction theorem 
for non-autonomous Hamiltonian systems.

Let $\mu:(A,\Omega)\to V$ be a symplectic principal $\Rr$-bundle 
with infinitesimal generator $Z_\mu$ and 
$\phi:G\times A\to A$ a canonical action of a Lie group 
$G$ on the symplectic principal $\Rr$-bundle $\mu:(A,\Omega)\to V$. 
Denote by $\phi^V:G\times V\to V$ 
the corresponding action on $V$.

Now suppose that $h:V\to A$ is a Hamiltonian section 
of $\mu$.
\begin{defn}
The Hamiltonian section $h$ is said to be 
\emph{$G$-equivariant} if $h$ is equivariant with respect to
the actions $\phi$ and $\phi^V$, that is,
\begin{equation}\label{hGequiv}
 h\circ\phi^V_g=\phi_g\circ h, \qquad\mbox{for }g\in G.
\end{equation}
\end{defn}

Note that, if $h:V\to A$ is a Hamiltonian section of a principal
$\Rr$-bundle, then, from 
\eqref{muEquivariant} and \eqref{DefinitionFh}, we have that
\begin{equation}\label{Formula1Ginvariance}
 \phi_g(a)=\psi\big( F_h(\phi_g(a)) , h(\phi^V_g(\mu(a))) \big),
\end{equation}
for any $a\in A$ and $g\in G$. On the other hand, applying $\phi_g$ 
to the two sides of \eqref{DefinitionFh} and using 
\eqref{PhiPsiCompatible}, we obtain 
\begin{equation}\label{Formula2Ginvariance}
 \phi_g(a)=\psi\big( F_h(a) , \phi_g(h(\mu(a))) \big).
\end{equation}
Comparing \eqref{Formula1Ginvariance} and 
\eqref{Formula2Ginvariance}, one may deduce that a 
Hamiltonian section $h:V\to A$ is $G$-equivariant 
if and only if the corresponding function $F_h$ is $G$-invariant, 
i.e.~$F_h\circ\phi_g=F_h$ for any $g\in G$.

\begin{prop}\label{CosymplecticAction}
If $h:V\to A$ is a $G$-equivariant Hamiltonian section, the
induced action $\phi^V:G\times V\to V$ is a cosymplectic 
action with respect to the cosymplectic structure $(\omega_h,\eta_h)$
on $V$ defined by $h$.

Moreover, if $J:A\to\g^*$ is a momentum map with 
respect to the action $\phi$, then the induced momentum map 
$J^V:V\to\g^*$ is such that $\Ree_h(J^V_\xi)=0$, for any $\xi\in\g$.
\end{prop}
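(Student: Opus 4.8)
The plan is to verify the two assertions of Proposition~\ref{CosymplecticAction} directly from the definitions, using the $G$-equivariance of $h$ in the equivalent form ``$F_h$ is $G$-invariant'', which was established in the paragraph preceding the statement. For the first assertion, I need to show $(\phi^V_g)^*\omega_h=\omega_h$ and $(\phi^V_g)^*\eta_h=\eta_h$ for every $g\in G$. Recall from \eqref{DefOmegahAndEtah} that $\omega_h=h^*\Omega$ and $\eta_h=-h^*(i_{Z_\mu}\Omega)$. First I would compute $(\phi^V_g)^*\omega_h=(\phi^V_g)^*h^*\Omega=(h\circ\phi^V_g)^*\Omega$, and then use the equivariance relation \eqref{hGequiv}, $h\circ\phi^V_g=\phi_g\circ h$, to rewrite this as $(\phi_g\circ h)^*\Omega=h^*(\phi_g^*\Omega)=h^*\Omega=\omega_h$, where the last step uses that $\phi$ is a symplectic action (condition i) in the definition of a canonical action).

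For $\eta_h$ the argument is the same in spirit but needs one extra ingredient: the infinitesimal generators $Z_\mu$ and the action $\phi$ interact correctly. Since $\phi$ and $\psi$ commute \eqref{PhiPsiCompatible}, the vector field $Z_\mu$ is $\phi_g$-invariant, i.e.\ $(\phi_g)_*Z_\mu=Z_\mu$ for all $g\in G$. Then $(\phi^V_g)^*\eta_h=-(h\circ\phi^V_g)^*(i_{Z_\mu}\Omega)=-(\phi_g\circ h)^*(i_{Z_\mu}\Omega)=-h^*\big(\phi_g^*(i_{Z_\mu}\Omega)\big)$, and since $\phi_g$ preserves both $\Omega$ and $Z_\mu$ we get $\phi_g^*(i_{Z_\mu}\Omega)=i_{Z_\mu}\Omega$, so $(\phi^V_g)^*\eta_h=\eta_h$. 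Hence $\phi^V$ is a cosymplectic action. (Here I am using that $\phi_g^*(i_XT)=i_{\phi_g^*X}(\phi_g^*T)$ for the pullback of an interior product, a standard naturality identity.)

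For the second assertion I would start from the characterization of $\Ree_h$: by Proposition~\ref{HaFhProjectable}, $\Ha_{F_h}$ is $\mu$-projectable and its projection is $\Ree_h$, so for any function $f$ on $V$ one has $\Ree_h(f)\circ\mu=\Ha_{F_h}(f\circ\mu)$. Applying this with $f=J^V_\xi$ and using $J^V\circ\mu=J$ from \eqref{JVandJ}, I get $\Ree_h(J^V_\xi)\circ\mu=\Ha_{F_h}(J_\xi)=\{J_\xi,F_h\}_A=-\{F_h,J_\xi\}_A=-\xi_A(F_h)$, where the last equality uses that $J$ is a momentum map so $\Ha_{J_\xi}=\xi_A$. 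Since $F_h$ is $G$-invariant, the infinitesimal generator $\xi_A$ annihilates it: $\xi_A(F_h)=0$. Therefore $\Ree_h(J^V_\xi)\circ\mu=0$, and since $\mu$ is surjective we conclude $\Ree_h(J^V_\xi)=0$ for every $\xi\in\g$.

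There is no serious obstacle here; the proof is a routine chain of pullback manipulations. The one point that requires a small observation rather than a mechanical computation is that the $G$-invariance of $F_h$ is equivalent to the $G$-equivariance of $h$ — but this has already been recorded in the excerpt just before the proposition, so it may be cited directly. The only mild care needed is the naturality identity for $\phi_g^*(i_{Z_\mu}\Omega)$ in the $\eta_h$ computation, which hinges on $\phi_g$ preserving $Z_\mu$; this in turn follows from commutativity of the two actions, condition ii) of a canonical action. I would state these reductions explicitly and let the computations above close the proof.
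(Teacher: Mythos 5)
Your proof is correct and follows essentially the same route as the paper: the first assertion via the pullback chain $(\phi^V_g)^*h^*=h^*\phi_g^*$ using $h\circ\phi^V_g=\phi_g\circ h$ together with the $\phi_g$-invariance of $\Omega$ and of $i_{Z_\mu}\Omega$ (the latter from $(\phi_g)_*Z_\mu=Z_\mu$), and the second via the computation $\Ree_h(J^V_\xi)\circ\mu=\Ha_{F_h}(J_\xi)=-\xi_A(F_h)=0$ from the $G$-invariance of $F_h$. You merely spell out the pullback naturality and the surjectivity of $\mu$ that the paper leaves implicit, which is a harmless gain in precision rather than a different argument.
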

\begin{proof}

From the equivariance of $h$ and the $G$-invariance of $\Omega$ and $i_{Z_\mu}\Omega$, we have that $\phi^V_g$ preserves the forms $\omega_h$ and $\eta_h$, for any $g\in G$.
Thus, $\phi^V$ is a cosymplectic action. 
Moreover, for any $\xi\in\g$ we have (see \eqref{JVandJ})
\[
 \Ree_h(J^V_\xi) = \Ha_{F_h}(J^V_\xi\circ\mu) = \Ha_{F_h}(J_\xi) = -\Ha_{J_\xi}(F_h) = -\xi_A(F_h)=0,
\]
$\xi_A$ being the infinitesimal generator of $\phi$ defined 
by $\xi$. The last equality follows from the $G$-invariance 
of $F_h$.
\end{proof}

Now, we may reduce the non-autonomous Hamiltonian system. 
Let $(A,\mu,\Omega,h)$ be a non-autonomous Hamiltonian system 
equipped with a canonical action $\phi:G\times A\to A$ 
of a Lie group $G$ on the manifold $A$ with an $\Ad^*$-equivariant 
momentum map $J:A\to\g^*$. Suppose that 
the induced action $\phi^V:G\times V\to V$ on $V$ is free 
and proper. Let $\nu$ be an element of $\g^*$. Then we induce 
a free and proper action 
$\phi^V:G\times (J^V)^{-1}(\nu)\to (J^V)^{-1}(\nu)$ of $G$ on 
$(J^V)^{-1}(\nu)$ and we may apply Albert's Theorem (see Theorem 
\ref{CosymplecticReduction} in Appendix \ref{MWRed}). Thus, we 
may reduce the cosymplectic manifold $(V,\omega_h,\eta_h)$ 
for obtaining a reduced cosymplectic manifold 
$(V_\nu,(\omega_h)_\nu,(\eta_h)_\nu)$, where $V_\nu$
is the quotient manifold $(J^V)^{-1}(\nu)/G_\nu$ and 
$(\omega_h)_\nu$, $(\eta_h)_\nu$ are the $2$-form and $1$-form 
on $V_\nu$ characterized by
\begin{equation}\label{ConditionsReducedCosymplStruct}
 (\pi^V_\nu)^*(\omega_h)_\nu=(i^V_\nu)^*\omega_h,\qquad 
 (\pi^V_\nu)^*(\eta_h)_\nu=(i^V_\nu)^*\eta_h 
\end{equation}
$\pi^V_\nu:(J^V)^{-1}(\nu)\to V_\nu$ and 
$i^V_\nu:(J^V)^{-1}(\nu)\hookrightarrow V$ being the 
canonical projection and the canonical inclusion, 
respectively.

On the other hand, from Theorem \ref{AVreductionTheorem}, 
we obtain a symplectic principal $\Rr$-bundle $\mu_\nu:(A_\nu,\Omega_\nu)\to V_\nu$
with infinitesimal generator $Z_{\mu_\nu}$. We recall that the restriction 
of $Z_\mu$ to $J^{-1}(\nu)$ is tangent to $J^{-1}(\nu)$ and that 
${Z_\mu}_{|J^{-1}(\nu)}$ is $\pi_\nu$-projectable on $Z_{\mu_\nu}$. 
Moreover,  using \eqref{JVandJ} and \eqref{hGequiv}, we have that $h$ restricts to a $G_\nu$-invariant map $h:(J^V)^{-1}(\nu)\to J^{-1}(\nu)$. Therefore, $h$ induces a smooth map $h_\nu:V_\nu\to A_\nu$ characterized by 
\begin{equation}\label{reduced_h}
 h_\nu\circ \pi^V_\nu = \pi_\nu\circ h. 
\end{equation}
The function $h_\nu$ is a section of $\mu_\nu$, so $h_\nu$ is a
Hamiltonian section of the symplectic principal $\Rr$-bundle $\mu_\nu:A_\nu\to V_\nu$ 
and $(A_\nu,\mu_\nu,\Omega_\nu,h_\nu)$ is a non-autonomous 
Hamiltonian system. 

A direct computation, using \eqref{muAndPi}, \eqref{ReducedAction}, 
\eqref{DefinitionFh} and \eqref{reduced_h}, shows that the function 
$F_{h_\nu}$ on $A_\nu=J^{-1}(\nu)/G_\nu$ is characterized by the following
condition
\begin{equation}\label{ReducedFh}
 F_{h_\nu}\circ\pi_\nu={F_h}_{|J^{-1}(\nu)}.
\end{equation}
Thus, $F_{h_\nu}$ may be obtained from $F_h$ by passing to quotient.

Moreover, from Theorem \ref{CosymplStructureOnV}, 
$(V_\nu,(\omega_\nu)_{h_\nu}, (\eta_\nu)_{h_\nu})$ is a 
cosymplectic manifold whose structure is given by
\begin{equation}\label{redCosForms}
 (\omega_\nu)_{h_\nu} = h_\nu^*\Omega_\nu,
  \qquad (\eta_\nu)_{h_\nu} = -h_\nu^*(i_{Z_{\mu_\nu}}\Omega_\nu).
\end{equation}

\begin{thm}
Let $(A,\mu,\Omega,h)$ be a non-autonomous Hamiltonian 
system and $\phi:G\times A\to A$ be a canonical action 
of $G$ on $A$ such that the induced action on $V$ is free and proper. 
Suppose that $J:A\to\g^*$ is an $\Ad^*$-equivariant 
momentum map. If $h$ is $G$-equivariant, then, for any $\nu\in\g^*$, the 
cosymplectic structure $((\omega_\nu)_{h_\nu}, (\eta_\nu)_{h_\nu})$ 
on $V_\nu$ induced by the reduced non-autonomous Hamiltonian system 
$(A_\nu,\mu_\nu,\Omega_\nu,h_\nu)$ is the one deduced from Albert's 
reduction of the cosymplectic structure $(\omega_h,\eta_h)$ on $V$. 
In other words, 
\begin{equation}\label{RedCosymplStruct}
 (\omega_\nu)_{h_\nu} = (\omega_h)_\nu,\qquad
   (\eta_\nu)_{h_\nu} = (\eta_h)_\nu.
\end{equation}
In particular, the dynamics $\Ree_{h_\nu}$ of the reduced 
non-autonomous Hamiltonian system is just the 
$\pi^V_\nu$-projection of the restriction to $(J^V)^{-1}(\nu)$ of 
the dynamics $\Ree_h$ of $(A,\mu,\Omega,h)$. More precisely, 
\begin{equation}\label{ReducedDynamic}
 \Ree_{h_\nu}(\pi^V_\nu(v))=T_v\pi^V_\nu(\Ree_h(v)),
  \qquad\mbox{for any }v\in(J^V)^{-1}(\nu).
\end{equation}
\end{thm}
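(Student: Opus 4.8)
The plan is to verify the two identities in \eqref{RedCosymplStruct} by pulling everything back to the level set $(J^V)^{-1}(\nu)$, where the reduced objects are characterized, and then to read off \eqref{ReducedDynamic} as a formal consequence. Write $i_\nu:J^{-1}(\nu)\hookrightarrow A$, $\pi_\nu:J^{-1}(\nu)\to A_\nu$, $i^V_\nu:(J^V)^{-1}(\nu)\hookrightarrow V$ and $\pi^V_\nu:(J^V)^{-1}(\nu)\to V_\nu$, and recall that the restriction of $\mu$ to $J^{-1}(\nu)$ is a surjective submersion onto $(J^V)^{-1}(\nu)$, that $\mu_\nu\circ\pi_\nu=\pi^V_\nu\circ\mu$ on $J^{-1}(\nu)$ by \eqref{muAndPi}, and that $h$ restricts to $h:(J^V)^{-1}(\nu)\to J^{-1}(\nu)$ with $h_\nu\circ\pi^V_\nu=\pi_\nu\circ h$ by \eqref{reduced_h}.

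The first step is the $2$-form identity $(\omega_\nu)_{h_\nu}=(\omega_h)_\nu$. Since $\pi^V_\nu$ is a surjective submersion it suffices to check equality of the pullbacks along $\pi^V_\nu$. On the one hand $(\pi^V_\nu)^*(\omega_h)_\nu=(i^V_\nu)^*\omega_h=(i^V_\nu)^*h^*\Omega=(h\circ i^V_\nu)^*\Omega$ using \eqref{ConditionsReducedCosymplStruct} and \eqref{DefOmegahAndEtah}. On the other hand, from \eqref{redCosForms} and \eqref{reduced_h}, $(\pi^V_\nu)^*(\omega_\nu)_{h_\nu}=(\pi^V_\nu)^*h_\nu^*\Omega_\nu=(h_\nu\circ\pi^V_\nu)^*\Omega_\nu=(\pi_\nu\circ h)^*\Omega_\nu=h^*\pi_\nu^*\Omega_\nu=h^*i_\nu^*\Omega$, where in the last step I use the Marsden--Weinstein characterization $\pi_\nu^*\Omega_\nu=i_\nu^*\Omega$ of the reduced symplectic form (equation \eqref{ReducedSymplecticForm} in the appendix). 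Finally, since $h$ maps $(J^V)^{-1}(\nu)$ into $J^{-1}(\nu)$, we have $i_\nu\circ h=h\circ i^V_\nu$ as maps $(J^V)^{-1}(\nu)\to A$, so $h^*i_\nu^*\Omega=(i_\nu\circ h)^*\Omega=(h\circ i^V_\nu)^*\Omega$, which matches the previous computation. Hence the two pullbacks agree, so $(\omega_\nu)_{h_\nu}=(\omega_h)_\nu$.

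The second step is the $1$-form identity $(\eta_\nu)_{h_\nu}=(\eta_h)_\nu$. Again it is enough to compare pullbacks under $\pi^V_\nu$. By Theorem \ref{AVreductionTheorem}, ${Z_\mu}_{|J^{-1}(\nu)}$ is tangent to $J^{-1}(\nu)$ and $\pi_\nu$-projectable onto $Z_{\mu_\nu}$, i.e.~$T\pi_\nu\circ{Z_\mu}_{|J^{-1}(\nu)}=Z_{\mu_\nu}\circ\pi_\nu$ and $Ti_\nu\circ{Z_\mu}_{|J^{-1}(\nu)}=Z_\mu\circ i_\nu$. Therefore $\pi_\nu^*(i_{Z_{\mu_\nu}}\Omega_\nu)=i_{Z_\mu}(\pi_\nu^*\Omega_\nu)=i_{Z_\mu}(i_\nu^*\Omega)=i_\nu^*(i_{Z_\mu}\Omega)$ on $J^{-1}(\nu)$, where the middle equality is $\pi_\nu^*\Omega_\nu=i_\nu^*\Omega$ again and the outer equalities use the respective relatedness of $Z_\mu$. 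Consequently $(\pi^V_\nu)^*(\eta_\nu)_{h_\nu}=-(\pi^V_\nu)^*h_\nu^*(i_{Z_{\mu_\nu}}\Omega_\nu)=-(\pi_\nu\circ h)^*(i_{Z_{\mu_\nu}}\Omega_\nu)=-h^*i_\nu^*(i_{Z_\mu}\Omega)=-(i_\nu\circ h)^*(i_{Z_\mu}\Omega)=-(h\circ i^V_\nu)^*(i_{Z_\mu}\Omega)=(i^V_\nu)^*\big(-h^*(i_{Z_\mu}\Omega)\big)=(i^V_\nu)^*\eta_h=(\pi^V_\nu)^*(\eta_h)_\nu$, using \eqref{redCosForms}, \eqref{reduced_h}, $i_\nu\circ h=h\circ i^V_\nu$, \eqref{DefOmegahAndEtah} and \eqref{ConditionsReducedCosymplStruct}. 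Surjectivity of $\pi^V_\nu$ gives $(\eta_\nu)_{h_\nu}=(\eta_h)_\nu$.

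It remains to deduce \eqref{ReducedDynamic}. By Theorem \ref{CosymplStructureOnV}, $\Ree_{h_\nu}$ is the Reeb vector field of $((\omega_\nu)_{h_\nu},(\eta_\nu)_{h_\nu})$, which by \eqref{RedCosymplStruct} equals $((\omega_h)_\nu,(\eta_h)_\nu)$, the Albert reduction of $(\omega_h,\eta_h)$. By the cosymplectic reduction theorem (Theorem \ref{CosymplecticReduction} in the appendix), the Reeb vector field of the reduced structure is the $\pi^V_\nu$-projection of ${\Ree_h}_{|(J^V)^{-1}(\nu)}$; this requires only that $\Ree_h$ be tangent to $(J^V)^{-1}(\nu)$, which follows from $\Ree_h(J^V_\xi)=0$ for all $\xi\in\g$ in Proposition \ref{CosymplecticAction}. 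This is exactly \eqref{ReducedDynamic}. The only place demanding care is the bookkeeping with the four maps $i_\nu,\pi_\nu,i^V_\nu,\pi^V_\nu$ and the repeated use of $i_\nu\circ h=h\circ i^V_\nu$ together with $\pi_\nu^*\Omega_\nu=i_\nu^*\Omega$; once those are organized, both identities are immediate, and I expect no genuine obstacle beyond this.
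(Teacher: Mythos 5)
Your proposal is correct and follows essentially the same route as the paper: both identities are verified by comparing pullbacks along $\pi^V_\nu$, using $h_\nu\circ\pi^V_\nu=\pi_\nu\circ h$, the Marsden--Weinstein relation $\pi_\nu^*\Omega_\nu=i_\nu^*\Omega$, and the $\pi_\nu$-relatedness of $Z_\mu$ and $Z_{\mu_\nu}$, with the dynamics statement then read off from Albert's cosymplectic reduction theorem. The paper writes the same computation pointwise rather than as a chain of pullback identities, but the content is identical.
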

\begin{proof}
In order to show \eqref{RedCosymplStruct}, we will prove that the $2$-form 
$(\omega_\nu)_{h_\nu}$ and the $1$-form $(\eta_\nu)_{h_\nu}$ satisfy  
condition \eqref{ConditionsReducedCosymplStruct}. In fact, if 
$v\in(J^V)^{-1}(\nu)$ and $X,Y\in T_v((J^V)^{-1}(\nu))$, then, 
using \eqref{reduced_h} and \eqref{redCosForms}, we have that 
\[
 \begin{split}
  \left( (\pi^V_\nu)^*(\omega_\nu)_{h_\nu} \right)(v)&(X,Y) 
   = (\omega_\nu)_{h_\nu}(\pi^V_\nu(v))
      (T_v\pi^V_\nu(X),T_v\pi^V_\nu(Y)) \\
  &= \Omega_\nu(\pi_\nu(h(v))) 
     \left( T_{h(v)}\pi_\nu(T_vh(X)), 
            T_{h(v)}\pi_\nu(T_vh(Y)) \right) \\
  &= (i_\nu^*\Omega)(h(v)) \left( T_vh(X), T_vh(Y) \right) 
   = (i_\nu^*\omega_h)(v)(X,Y).
 \end{split}
\]
Thus, $(\pi^V_\nu)^*(\omega_\nu)_{h_\nu}=i_\nu^*\omega_h$. 
Moreover, using again \eqref{reduced_h} and \eqref{redCosForms}, we deduce that
\[
 \begin{split}
  \big( (\pi^V_\nu)^*(\eta_\nu)_{h_\nu} \big)(v)(X) 
  &= - \left( h_\nu^*(i_{Z_{\mu_\nu}}\omega_\nu) \right)(\pi^V_\nu(v)) 
       \left( T_v\pi^V_\nu(X) \right) \\
  &= - \left( i_{Z_{\mu_\nu}}\Omega_\nu \right)(\pi_\nu(h(v)))
       \left( T_{h(v)}\pi_\nu(T_vh(X)) \right) \\
  &= - \left( i_\nu^*(i_{Z_\mu}\omega) \right)(h(v)) (T_vh(X)) 
   = (i_\nu^*\eta_h)(v) (X).
 \end{split}
\]
Therefore, $(\pi^V_\nu)^*(\eta_\nu)_{h_\nu}=i_\nu^*\eta_h$.
\end{proof}

Note that another way to obtain \eqref{ReducedDynamic} is 
to use \eqref{ReducedFh}. In fact, since $\Ha_{F_h}$ (respectively, 
$\Ha_{F_{h_\mu}}$) is $\mu$-projectable (respectively, $\mu_\nu$-projectable) 
on $\Ree_h$ (respectively, $\Ree_{h_\nu}$), using 
\eqref{HamiltonianRelatedPoisson}, we have that
\[
 \begin{split}
  \Ree_{h_\nu}(\pi^V_\nu(v)) 
  &= T_{\pi_\nu(a)}\mu_\nu(\Ha_{F_{h_\mu}}(\pi_\nu(a))) 
   = T_{\pi_\nu(a)}\mu_\nu \left( T_a\pi_\nu(\Ha_{F_h}(a)) \right) \\
  &= T_{\mu(a)}\pi^V_\nu \left( T_a\mu(\Ha_{F_h}(a))\right)
   = T_{\mu(a)}\pi^V_\nu ( \Ree_h(v) ),
 \end{split}
\]
for any $v=\mu(a)\in(J^V)^{-1}(\nu)$.

\begin{ex}[{\it The bidimensional time-dependent damped harmonic oscillator}]  
Using the same notation as in Example \ref{ex4.8}, we have that the extended Hamiltonian function $F_h:T^*(S^1\times \Rr^+)\times \Rr^2\to \Rr$ is given by 
\[
 F_h(\theta,r,p_\theta,p_r,t,p)=\frac{e^{\sigma(t)}}{2}(p_r^2+\frac{1}{r^2}p_\theta^2) + F(t)r^2 + p.
\]
Since $F_h$ is $T^*\phi$-invariant, we may reduce the non-autonomous Hamiltonian system $(T^*(S^1\times\Rr^+)\times\Rr^2,\mu_\pi,\Omega,h)$ at $\nu\in\Rr$. The reduced homogeneous Hamiltonian system $F_{h_\nu}:T^*(\Rr^+\times\Rr)\to\Rr$ is given by
\[
 F_{h_\nu}((r,t),(p_r,p))=\frac{e^{\sigma(t)}}{2}\left(p_r^2 + \frac{\nu^2}{r^2}\right) + F(t)r^2 + p.
\]
Finally, the reduced dynamics is described by the cosymplectic structure $((\omega_{h})_\nu,(\eta_{h})_\nu)$ and the vector field ${\mathcal R}_{h_\nu}$  on $T^*\Rr^+\times \Rr$
\begin{gather*}
\omega_{h_\nu}=dr\wedge dp_r + \left(2F(t)r-e^{\sigma(t)}\frac{\nu^2}{r^3}\right) dr\wedge dt + e^{\sigma(t)}p_r dp_r\wedge dt,\quad \eta_{h_\nu}=dt, \\
{\mathcal R}_{h_\nu}=\displaystyle\frac{\partial }{\partial t} + {e^{\sigma(t)}}p_r\frac{\partial }{\partial r}+\left(e^{\sigma(t)}\frac{\nu^2}{r^3} - 2F(t)r\right)\frac{\partial}{\partial p_r}.
\end{gather*}

\end{ex}

\begin{ex}[{\it The time-dependent heavy top}]
Using the same notation as in Example \ref{ex4.9}, we have that the extended Hamiltonian homogeneous function $F_h:(SO(3)\times \Rr)\times (\Rr^3\times \Rr)\to\Rr$ associated with this system is  
\[
 F_h((A,t),\Pi,p)=\frac{1}{2}\langle \mathbb{I}^{-1}\Pi,\Pi\rangle+ \langle A^{-1}e_3,\gamma(t) \rangle + p.
\]

When one applies the reduction process at the level $\nu=0$, we obtain the reduced Hamiltonian homogenous function $F_{h_0}:T^*S^2\times \Rr^2\to \Rr$ which is the restriction to $T^*S^2\times \Rr^2$ of $\widetilde{F}_{h_0}:\Rr^3\times \Rr^3\times \Rr^2\to \Rr$ 
\[
 \widetilde{F}_{h_0}((q,p_q,t,p))=\frac{1}{2}\langle \mathbb{I}^{-1}(p_q\times q),(p_q\times q)\rangle + \langle q,\gamma(t) \rangle + p.
\]

In fact, the equations defining to $T^*S^2$ as a submanifold of $T^*\Rr^3\cong \Rr^3\times \Rr^3$ are $\|q\|^2-1=0$ and $q\cdot p_q=0$. So, any extension  of $F_{h_0}$ has the form 
\[
 \bar{F}_{h_0}(q,p_q,t,p)=\widetilde{F}_{h_0}(q,p_q,t,p) + \lambda (q\cdot p_q) + \mu(\|q\|^2-1),
\]
where $\lambda$ and $\mu$ are the Lagrange multipliers which we must determine. Then, the
Hamilton equations for this Hamiltonian function with initial condition on $T^*S^2\times \Rr$ are 
\begin{equation*}
 \left\{\begin{array}{rcl}
  \dot{q} &=& q\times \mathbb{I}^{-1}(p_q\times q)+\lambda q \\[3pt]
  \dot{p_q} &=& p_q\times\mathbb{I}^{-1}(p_q\times q) - \gamma(t) - \lambda p_q -2\mu q \\[3pt] 
  \dot{t}&=&1
\end{array}\right.
\end{equation*}
with $(q,p_q)\in T^*S^2.$ Since $q\cdot \dot{q}=0$ and $p_q\cdot \dot{q} + \dot{p_q}\cdot q=0$, then 
\begin{equation*}\label{redeq}
 \left\{\begin{array}{rcl}
  \dot{q} &=& q\times \mathbb{I}^{-1}(p_q\times q) \\[3pt]
  \dot{p_q} &=& p_q \times \mathbb{I}^{-1}(p_q\times q) - \gamma(t) + \langle q, \gamma(t) \rangle q \\[3pt] 
  \dot{t}&=&1
\end{array}\right.
\end{equation*}
The solutions of these equations are just the integral curves of the Reeb vector field associated with the cosymplectic manifold $T^*S^2\times\Rr$ equipped with the cosymplectic structure $(\Omega_{S^2}+\d H_0\wedge \d t,\d t)$ on $T^*S^2\times\Rr$, where $H_0:T^*S^2\times\Rr\to\Rr$ is given by
\[
H_0(q,p_q,t)=\frac{1}{2}\langle{\Bbb I}^{-1}(p_q\times q),p_q\times q\rangle + \langle q,\gamma(t)\rangle.
\]

Note that if $\mathbb{I}=Id$ and $\gamma(t)=0$ then the corresponding reduced Hamilton equations are 
\[
 \dot{q}=p_q,\qquad\dot{p_q}=-\|\dot{q}\|^2q,\qquad\dot{t}=1
\]
or equivalently $\mbox{\it \"q}=-\|\dot{q}\|^2q$ and $\dot{t}=1.$ Therefore, the geodesics of the standard metric of $S^2\times \Rr$ are just the solutions of the previous equations. 
\end{ex}

\section{Another Example: the frame-independent formulation of the analytical mechanics in a Newtonian space-time}\label{Section6}

The \emph{Newtonian space-time} is a system $(E,\tau,g)$ where $E$ is an affine space modelled over the $n$-dimensional vector space $V$, $\tau$ is a non-zero element of $V^*$ and $g:V_0\to V_0^*$ is a scalar product on $V_0=\ker\tau$. Let $V_1$ be the affine subspace of $V$ defined by the equation $\tau(v)=1$. An element of $V_1$ may be interpreted as the family of inertial observers that move in the space-time with the constant velocity $u$ (see \cite{GrUr}). We will denote by $i:V_0\to V$ inclusion and by $g'=i\circ g^{-1}\circ i^*:V^*\to V$ the contravariant tensor on $V$ defined by $g$.

If $u$ is a fixed inertial frame, the homogeneous Hamiltonian function on $T^*E\simeq E\times V^*$ is given by
\[ 
 H_u(x,\alpha)=\alpha(u)+\frac{1}{2m}\alpha(g'(\alpha))+\varphi(x), \qquad\mbox{for any }x\in E,\ \alpha\in V^*,
\]
where $\varphi:E\to\Rr$ is the potential. The aim of this section is to describe a frame-independent formulation of the dynamics and describe how a reduction procedure may be applied in the symplectic principal $\Rr$-bundle setting. 

Consider the following equivalence relations on $V_1\times E\times V^*$ and $V_1\times E\times V_0^*$, respectively
\[
 \begin{split}
  (u,x,\alpha)\sim(u',x',\alpha') & \Leftrightarrow x=x'\mbox{ and }\alpha'=\alpha+m\sigma(u,u') \\
  (u,x,\bar\alpha)\sim_0(u',x',\bar\alpha') & \Leftrightarrow x=x'\mbox{ and }\bar{\alpha}'=\bar{\alpha}+mg(u-u'), 
 \end{split}
\]
for any $(u,x,\alpha),(u',x',\alpha')\in V_1\times E\times V^*$ and $(u,x,\bar\alpha),(u',x',\bar\alpha')\in V_1\times E\times V_0^*$, where $\sigma:V_1\times V_1\to V^*$ is the map defined by
\[
 \sigma(u,u')(v)=g(u-u')\left(v-\tau(v)\frac{u+u'}{2}\right),\qquad v\in V.
\]
Then, one may easily prove that the quotient space $P=(V_1\times E\times V^*)/\sim$ is an affine bundle over $E$ modelled over the vector bundle $E\times V^*\to E$.
Moreover, if $u\in V_1$ is fixed, there exists a unique symplectic form $\Omega$ on $P$ such that the map $\Theta_u:T^*E\cong E\times V^*\to P$ given by $\Theta_u(x,\gamma)=[(u,x,\gamma)]$ is a symplectomorphism, where $E\times V^*\simeq T^*E$ is equipped with the canonical symplectic $2$-form. If $u'\in V_1$ then, since  $\Theta_u^{-1}\circ\Theta_{u'}:T^*E\to T^*E$ is just the translation by the constant $1$-form $\sigma(u',u)$, $\Omega$ doesn't depend on the choose of $u$. On the other hand, we may consider the action $\psi:\Rr\times P\to P$ and the projection $\mu:P\to P_0$ given by
\[
 \psi(s,[u,x,\alpha])=[u,x,\alpha+s\tau],\qquad \mu[u,x,\alpha]=[u,x,\alpha_{|V_0}], 
\]  
where $P_0$ is the quotient space $(V_1\times E\times V_0^*)/\sim_0$. Then, $\mu:(P,\Omega)\to P_0$ is a symplectic principal $\Rr$-bundle  and the principal action of $\Rr$ on $P$ is just $\psi$. 

Consider the following Hamiltonian section $h:P_0\to P$
\[
 h[u,x,\bar\alpha]=\left[u,x,\bar\alpha\circ i_u-\left(\frac{1}{2m}\bar\alpha(g^{-1}(\bar\alpha))+\varphi(x)\right)\tau\right],
\]
where $i_u:V\to V_0$ is the projection $v\mapsto v-\tau(v)u$. Note that the corresponding function $F_h:P\to\Rr$ is just the homogeneous Hamiltonian function, that is $F_h[u,x,\alpha]=H_u(x,\alpha)$. Thus, $(P,\mu,\Omega,h)$ is a non-autonomous Hamiltonian system (\emph{Frame-independent dynamical system in a Newtonian space-time}). 

Now, we will introduce a symmetry in the system: let $G$ a subgroup of the group of the affine transformation of $A$. Suppose that for any $f_L\in G$, where $L:V\to V$ is the corresponding linear map, we have that
\[
 L^*\tau=\tau,\ \ L_{|V_0}\textrm{ preserves $g$ and }\varphi\circ f_L=\varphi.
\]
Moreover, we will suppose that ${\mathfrak g}\subset Aff(E,V_0).$ Then, we may consider the action $\phi:G\times P\to P$ defined by
\[
 (f_L,[u,x,\alpha])\mapsto [Lu,f_L(x), (L^{-1})^*\alpha].
\]
A straightforward computation shows that $\phi$ is a canonical action on the symplectic principal $\Rr$-bundle $\mu$. Finally, we will suppose that the induced action $\phi^{P_0}:G\times P_0\to P_0$ is free and proper. If not, one may restrict to a subset of $E$ (supposed open) and repeat the proofs. For any reference frame $u\in V_1$, one may consider the momentum map $J_u:P\to\g^*$ defined by
\[
 J_u([w,x,\alpha])=\left(\alpha-m\sigma(u,w)\right)(\xi_E(x)),\qquad\mbox{for any }[w,x,\alpha]\in P\mbox{ and } \xi\in\g,
\]
where $\xi_E\in\X(E)$ is the infinitesimal generator of the natural action of $G$ on $E$ and we are identifying $T_xE\simeq V$. 
Then, one obtain the following result
\begin{thm}
Under the previous hypotheses, if $\nu\in\g^*$ and $u\in V_1$ are fixed, a reduced symplectic principal $\Rr$-bundle $\mu_\nu:(P_\nu,\Omega_\nu)\to (P_0)_\nu$ and a reduced Hamiltonian section $h_\nu:(P_0)_\nu\to P_\nu$ are given, where 
\[
 P_\nu=J_u^{-1}(\nu)/G_\nu,\qquad (P_0)_\nu=(J^{P_0}_u)^{-1}(\nu)/G_\nu.
\]
\end{thm}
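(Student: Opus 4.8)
The plan is to check that the data $\mu:(P,\Omega)\to P_0$, $\phi:G\times P\to P$, $h:P_0\to P$ and $J_u:P\to\g^*$ satisfy all the hypotheses of Theorem \ref{AVreductionTheorem} together with those of the non-autonomous reduction procedure of Subsection 5.2, and then to invoke these results directly. Concretely, four things must be verified: (i) $\phi$ is a canonical action on $\mu$; (ii) $J_u$ is an $\Ad^*$-equivariant momentum map for $\phi$; (iii) $h$ is $G$-equivariant; and (iv) the induced action $\phi^{P_0}:G\times P_0\to P_0$ is free and proper --- the last of these being part of our standing assumptions (and, if it fails, one restricts $E$ to a $G$-invariant open subset and repeats the argument).

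Point (i) is the straightforward computation already indicated in the text. For (ii) I would transport the situation to the cotangent bundle by means of the symplectomorphism $\Theta_u:T^*E\cong E\times V^*\to P$, $\Theta_u(x,\gamma)=[u,x,\gamma]$. Using that $[w,x,\alpha]=[u,x,\alpha-m\sigma(u,w)]$ --- a consequence of the defining equivalence relation together with the cocycle identity $\sigma(u,w)+\sigma(w,w')=\sigma(u,w')$ and the antisymmetry $\sigma(w,u)=-\sigma(u,w)$ of $\sigma$ --- one gets $\Theta_u^{-1}[w,x,\alpha]=(x,\alpha-m\sigma(u,w))$, so that $J_u=\widehat{J}\circ\Theta_u^{-1}$, where $\widehat{J}:T^*E\to\g^*$ is the cotangent-lift momentum map \eqref{Jcotangent} of the $G$-action on $E$, which is $\Ad^*$-equivariant. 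A direct computation shows that $\Theta_u$ conjugates $\phi$ to the action $(x,\gamma)\mapsto\big(f_L(x),(L^{-1})^*\gamma+m\sigma(Lu,u)\big)$, i.e. the cotangent lift of $f_L$ composed with translation by the constant (hence exact) $1$-form $m\sigma(Lu,u)$; using $L^*\tau=\tau$ and the assumption $\g\subset\mathrm{Aff}(E,V_0)$, I would check that this affine correction is compatible with the pairing against the infinitesimal generators $\xi_E$, so that $J_u$ is still a momentum map and inherits $\Ad^*$-equivariance from $\widehat{J}$.

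For (iii), I would use that (by the discussion in Subsection 5.2) $h$ is $G$-equivariant if and only if $F_h$ is $G$-invariant, and that $F_h[u,x,\alpha]=H_u(x,\alpha)=\alpha(u)+\frac1{2m}\alpha(g'(\alpha))+\varphi(x)$. Then I would compute, term by term,
\[
 F_h(\phi_{f_L}[u,x,\alpha])=H_{Lu}\big(f_L(x),(L^{-1})^*\alpha\big)=\big((L^{-1})^*\alpha\big)(Lu)+\tfrac1{2m}\big((L^{-1})^*\alpha\big)\big(g'((L^{-1})^*\alpha)\big)+\varphi(f_L(x)),
\]
and observe that the first summand equals $\alpha(u)$, the last equals $\varphi(x)$ by hypothesis, and the middle one equals $\frac1{2m}\alpha(g'(\alpha))$ because $L^*\tau=\tau$ forces $L$ to preserve $V_0=\ker\tau$ while $L|_{V_0}$ preserves $g$, so the raised metric $g'=i\circ g^{-1}\circ i^*$ is $(L^{-1})^*$-invariant. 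Hence $F_h\circ\phi_{f_L}=F_h$ for every $f_L\in G$, i.e. $h$ is $G$-equivariant.

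Once (i)--(iv) are in place the conclusion is immediate: Theorem \ref{AVreductionTheorem}, applied to $\mu:(P,\Omega)\to P_0$ with the canonical action $\phi$ and the $\Ad^*$-equivariant momentum map $J_u$, yields for each $\nu\in\g^*$ the reduced symplectic principal $\Rr$-bundle $\mu_\nu:(P_\nu,\Omega_\nu)\to(P_0)_\nu$ with $P_\nu=J_u^{-1}(\nu)/G_\nu$ and $(P_0)_\nu=(J^{P_0}_u)^{-1}(\nu)/G_\nu$ (here $J^{P_0}_u$ is the induced momentum map \eqref{JVandJ}), and, since $h$ is $G$-equivariant, the construction \eqref{reduced_h} of Subsection 5.2 produces the reduced Hamiltonian section $h_\nu:(P_0)_\nu\to P_\nu$, so that $(P_\nu,\mu_\nu,\Omega_\nu,h_\nu)$ is the reduced non-autonomous Hamiltonian system. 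I expect the main obstacle to be step (ii): keeping track of the frame-dependent exact $1$-form $m\sigma(\cdot\,,u)$ arising when $\phi$ is written in the frame $u$, and confirming that the associated cocycle does not spoil the $\Ad^*$-equivariance of $J_u$; everything else is routine given the hypotheses on $G$ and the general reduction machinery already established.
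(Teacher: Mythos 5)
Your overall strategy is exactly the one the paper intends: Section~\ref{Section6} gives no explicit proof of this theorem beyond the construction of the data, so the argument really is to check that $\phi$ is a canonical action, that $J_u$ is an $\Ad^*$-equivariant momentum map, that $h$ is $G$-equivariant, and then to quote Theorem~\ref{AVreductionTheorem} together with the construction \eqref{reduced_h} of the reduced Hamiltonian section. Your points (i), (iii) and (iv) are handled correctly: the hypothesis $\g\subset\mathrm{Aff}(E,V_0)$ gives $\tau(\xi_E)=0$, which is precisely what makes $\zeta_\mu(\xi_P)=0$ in the chart $\Theta_u$ (where $Z_\mu$ is the vertical lift of $\tau$), and your term-by-term verification of $F_h\circ\phi_{f_L}=F_h$ (using $L^*\tau=\tau$, the $g$-invariance of $L|_{V_0}$ and $\varphi\circ f_L=\varphi$) is the right way to obtain the $G$-equivariance of $h$.

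The gap is in step (ii), which you correctly single out as the crux but then assert rather than prove. Your own description of the conjugated action, $(x,\gamma)\mapsto\bigl(f_L(x),(L^{-1})^*\gamma+m\sigma(Lu,u)\bigr)$, already exhibits the problem: the infinitesimal generator of $\xi$ (with linear part $\Lambda$) in the chart $\Theta_u$ is $\xi_{T^*E}+c_\xi^v$, where $c_\xi=\frac{\mathrm d}{\mathrm dt}\big|_{0}\bigl(m\sigma(e^{t\Lambda}u,u)\bigr)=m\,i_u^*\bigl(g(\Lambda u)\bigr)\in V^*$ and $c_\xi^v$ is its constant vertical lift. Hence $i_{\xi_P}\Omega-\mathrm d (J_u)_\xi$ pulls back to $-m\,\pi_E^*\bigl(i_u^*(g(\Lambda u))\bigr)$, which vanishes only when $\Lambda u=0$. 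Exactness of the constant $1$-form does not rescue the claim: it shows the action is still Hamiltonian, but with Hamiltonian $\widehat J_\xi-m\,g(\Lambda u)\bigl(i_u(x-x_0)\bigr)$ rather than $\widehat J_\xi$. Concretely, for a Galilean boost on $E=\Rr^2$ with coordinates $(t,q)$, $\tau=\mathrm d t$, $u=(1,0)$ and $\xi_E=t\,\partial/\partial q$, the generator on $T^*E$ is $t\,\partial/\partial q-p_q\,\partial/\partial p_t+m\,\partial/\partial p_q$, whose Hamiltonian is $tp_q-mq$, whereas $(J_u)_\xi=tp_q$; one checks similarly that $J_u$ fails to be $\Ad^*$-equivariant for boosts. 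So either one must restrict to subgroups whose linear parts fix $u$ (excluding boosts), or one must replace $J_u$ by the corrected, origin-dependent expression and then re-examine equivariance, which now holds only modulo a cocycle. This defect is arguably inherited from the paper's formula for $J_u$, but your proposal as written does not close it, and it is precisely the point on which the whole application of Theorem~\ref{AVreductionTheorem} hinges.
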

Here, $J_u^{P_0}:P_0\to {\mathfrak g}^*$ is the momentum map for the Poisson action of $G$ on $P_0=(V_1\times E\times V_0^*)/\sim_0.$

\section{Conclusions and future work}

A reduction process for a symplectic principal $\Rr$-bundles has been described in this paper. 
In particular, we discuss the case of the standard symplectic  principal $\Rr$-bundle associated with a fibration over the real line.  Finally, we consider the reduction of a non-autonomous Hamiltonian section on a  symplectic principal $\Rr$-bundle. 
 In order to do this, we have obtained previously  a cosymplectic structure 
on the base space of the principal $\Rr$-bundle.

Along the paper, we assume the regularity of the canonical action on the symplectic principal $\mathbb{R}$-bundle. Then, one may ask if a similar construction holds if we relax this assumption  in order to include other examples (see, for instance, Example \ref{ex4.8}). We expect that well-known methods on singular reduction (see \cite{ACG, BaLe, OrRa98, OrRa, PeRoSo}) could be applied in the time-dependent setting. We remark that this reduction process could be not functorial, in the sense that a more general object (as, for instance, a suitable map between stratified spaces) could be needed.

Moreover, it would be interesting to develop the procedure of \emph{the reconstruction} in the 
symplectic principal $\Rr$-bundle framework. The aim is to obtain the dynamics of a symmetric 
non-autonomous Hamiltonian system on a symplectic principal $\Rr$-bundle from the reduced dynamics. 
Classical techniques on symplectic reconstruction (see \cite{MaMoRa}) could be used.

On the other hand, suppose that a canonical action of a connected Lie group $G$ on a symplectic principal $\Rr$-bundle is given and 
that $G$ has a closed normal subgroup $H$. A goal could be to realize the reduced principal $\Rr$-bundle 
in a two step procedure: first reducing by $H$ and then by an appropriate Lie group which is related with the quotient group $G/H$. It would be interesting to discuss this procedure
 which is called \emph{reduction by stages} (for reduction by stages in the symplectic framework, see \cite{MarsEtAl}).  

\appendix
\section{Poisson reduction theorems}\label{MWRed}
In this Appendix, we recall some well-known results about 
Poisson reduction in presence of a  momentum map (for more details, see \cite{AbrMarsd,Alb,Ku, LM, MarsEtAl,MarsRat,MarsWein,OrRa}).

Let $\phi:G\times M\to M$ be an action of a Lie group $G$ on a 
Poisson manifold $(M,\pbrEm)$. The action $\phi$ is said to be
a \emph{Poisson action} if $\phi_g:M\to M$ is a Poisson map 
for any $g\in G$. In such a case, a smooth map 
$J:M\to\g^*$ from $M$ to the dual space $\g^*$ of the Lie algebra 
$\g$ of $G$ is said to be a \emph{momentum map} if the infinitesimal generator
$\xi_M$ of the action associated with any $\xi\in\g$ is the 
Hamiltonian vector field of the function $J_\xi:M\to\Rr$ defined 
by the natural pointwise pairing.
Moreover, $J$ is said to be \emph{$\Ad^*$-equivariant} if it is 
equivariant with respect to the action $\phi$ and to the 
coadjoint action $\Ad^*:G\times\g^*\to\g^*$, i.e.
\[
 J(\phi_g(x))=\Ad^*_{g^{-1}}(J(x)),\qquad\mbox{for any }x\in M.
\]

Note that, if $\phi:G\times M\to M$ is a free and proper 
action and $\nu$ is an element of $\g^*$, then $\nu$ is a 
regular value of $J$ (see for example \cite{MarsEtAl}, pag.~8-9). 
Therefore, $J^{-1}(\nu)$ is a closed submanifold of $M$. 
Moreover, if $G_\nu$ denotes the isotropy group of $\nu$ with 
respect to the coadjoint action, i.e.
\[
 G_\nu=\{g\in G:\,Ad^*_{g}\nu=\nu\},
\]
then $\phi$ induces a free and proper action
\[
 \phi:G_\nu\times J^{-1}(\nu)\to J^{-1}(\nu)
\]
of $G_\nu$ on the submanifold $J^{-1}(\nu)$. 

In addition, we have the following result
\begin{thm}[\bf{Poisson reduction Theorem}, \cite{MarsRat}]\label{PoissonReduction}
 Let $\phi:G\times M\to M$ be a free and proper Poisson 
action of a Lie group $G$ on a Poisson manifold $(M,\pbrEm)$. 
If $J:M\to\g^*$ is an $\Ad^*$-equivariant momentum map 
associated with $\phi$ and $\nu\in\g^*$, then the reduced space 
$M_\nu=J^{-1}(\nu)/G_\nu$ is a Poisson manifold with 
Poisson bracket $\pbrEm_\nu$ characterized by
\begin{equation}\label{ReducedPoissonBracket}
 \pbr{\rho_\nu,\tau_\nu}_\nu(\pi_\nu(x)) = 
 \pbr{\rho,\tau}(x),
 \qquad\mbox{for any }\rho_\nu,\tau_\nu\in C^\infty(M_\nu),
\end{equation}
where $\pi_\nu:J^{-1}(\nu)\to M_\nu$ is the canonical projection and 
$\rho,\tau\in C^\infty(M)$ are arbitrary $G$-invariant extensions of 
$\rho_\nu\circ\pi_\nu$ and $\tau_\nu\circ\pi_\nu,$ respectively.
\end{thm}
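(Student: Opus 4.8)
The plan is to transport the Poisson structure of $M$ to $M_\nu$ through $G$-invariant extensions, after identifying $C^\infty(M_\nu)$ with the $G_\nu$-invariant functions on the level set. First I would collect the preliminaries recalled just above: since $\phi$ is free and proper and $J$ is $\Ad^*$-equivariant, $\nu$ is a regular value of $J$, so $J^{-1}(\nu)$ is a closed embedded submanifold of $M$, the induced $G_\nu$-action on it is free and proper, and hence $M_\nu=J^{-1}(\nu)/G_\nu$ is a smooth manifold with $\pi_\nu\colon J^{-1}(\nu)\to M_\nu$ a surjective submersion; pulling back by $\pi_\nu$ then identifies $C^\infty(M_\nu)$ with the algebra $C^\infty(J^{-1}(\nu))^{G_\nu}$. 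The only genuinely geometric input is the tangency lemma: if $\tau\in C^\infty(M)$ is $G$-invariant near $J^{-1}(\nu)$, its Hamiltonian vector field $\Ha_\tau$ is tangent to $J^{-1}(\nu)$, because for every $\xi\in\g$
\[
 \d J_\xi(\Ha_\tau)=\Ha_\tau(J_\xi)=-\Ha_{J_\xi}(\tau)=-\xi_M(\tau)=0,
\]
using $\Ha_{J_\xi}=\xi_M$ and the $G$-invariance of $\tau$, so that $\Ha_\tau$ annihilates $\d J$ along $J^{-1}(\nu)$.

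Next I would deal with $G$-invariant extensions. Existence: given $\bar\rho\in C^\infty(J^{-1}(\nu))^{G_\nu}$, freeness and properness of $\phi$ produce a $G$-invariant $\rho\in C^\infty(M)$ with $\rho|_{J^{-1}(\nu)}=\bar\rho$ (spread $\bar\rho$ over the saturation $G\cdot J^{-1}(\nu)\cong G\times_{G_\nu}J^{-1}(\nu)$ using $G_\nu$-invariance, then extend invariantly using that $M/G$ is a manifold). Rigidity: if $\rho,\rho'$ are two $G$-invariant extensions of $\bar\rho$ and $\tau$ is any $G$-invariant extension of some $\bar\tau\in C^\infty(J^{-1}(\nu))^{G_\nu}$, then $\rho-\rho'$ vanishes on $J^{-1}(\nu)$ while $\Ha_\tau$ is tangent to $J^{-1}(\nu)$, so $\pbr{\rho-\rho',\tau}=\Ha_\tau(\rho-\rho')$ vanishes on $J^{-1}(\nu)$; the tangency lemma applied to the other factor (via $\Ha_\rho$) handles changing the extension of $\bar\tau$. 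Hence $\pbr{\rho,\tau}|_{J^{-1}(\nu)}$ depends only on $\bar\rho,\bar\tau$, and I would set $\pbr{\bar\rho,\bar\tau}_\nu:=\pbr{\rho,\tau}|_{J^{-1}(\nu)}$.

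It then remains to check that $\pbrEm_\nu$ is a Poisson bracket realizing \eqref{ReducedPoissonBracket}. The function $\pbr{\rho,\tau}$ is $G$-invariant on $M$ (each $\phi_g$ is a Poisson map and $\rho,\tau$ are $G$-invariant), so its restriction to the $G_\nu$-stable set $J^{-1}(\nu)$ descends, along the surjective submersion $\pi_\nu$, to a smooth function $\pbr{\rho_\nu,\tau_\nu}_\nu$ on $M_\nu$; this is precisely the function characterized by \eqref{ReducedPoissonBracket}, which also forces uniqueness since $\pi_\nu$ is surjective. Bilinearity and antisymmetry are inherited directly from $\pbrEm$. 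For the Leibniz rule and the Jacobi identity the point is that $\pbr{\rho,\tau}$ is itself a $G$-invariant extension of (a representative of) $\pbr{\rho_\nu,\tau_\nu}_\nu\circ\pi_\nu$, so e.g.\ $\pbr{\pbr{\rho,\tau},\sigma}|_{J^{-1}(\nu)}$ represents $\pbr{\pbr{\rho_\nu,\tau_\nu}_\nu,\sigma_\nu}_\nu$; restricting the Jacobi identity for $\pbrEm$ to $J^{-1}(\nu)$ and pushing down by $\pi_\nu$ gives the Jacobi identity for $\pbrEm_\nu$, and similarly for Leibniz.

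The main obstacle I anticipate is the extension-existence step: this is the only place where properness (together with freeness) is genuinely used, and it needs a slice/tube argument or the smooth structure on $M/G$ (cf.~\cite{OrRa,MarsEtAl}). Everything else is routine bookkeeping built on the one-line tangency lemma.
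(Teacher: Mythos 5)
The paper does not actually prove this statement: it appears in the Appendix as a recalled classical result, attributed to Marsden--Ratiu \cite{MarsRat}, so there is no in-paper proof to compare against. Your argument is the standard proof of that theorem and is correct in substance. The tangency lemma $\Ha_\tau(J_\xi)=-\xi_M(\tau)=0$ (combined with $T_xJ^{-1}(\nu)=\ker T_xJ$, legitimate since freeness of the action makes $\nu$ a regular value even in the Poisson setting), the independence of $\pbr{\rho,\tau}|_{J^{-1}(\nu)}$ from the choice of invariant extensions, the $G$-invariance of $\pbr{\rho,\tau}$ (each $\phi_g$ being Poisson), and the descent of bilinearity, antisymmetry, Leibniz and Jacobi via the observation that $\pbr{\rho,\tau}$ is itself an admissible extension, are all handled correctly, as is the identification $C^\infty(M_\nu)\cong C^\infty(J^{-1}(\nu))^{G_\nu}$ for a free proper action.

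The one point you rightly isolate --- existence of $G$-invariant extensions --- is also the one place where your sketch is too optimistic. The saturation $G\cdot J^{-1}(\nu)=J^{-1}(\mathcal{O}_\nu)$ need not be closed in $M$ (the coadjoint orbit $\mathcal{O}_\nu$ need not be closed in $\g^*$ for non-compact $G$; even for free proper actions the saturation of a closed set can fail to be closed), so the step ``extend from a closed submanifold of $M/G$ to all of $M/G$'' does not always go through globally. The standard repair is to use \emph{local} $G$-invariant extensions: by the tube theorem for proper actions, a $G_\nu$-invariant function on $J^{-1}(\nu)$ descends to a locally closed embedded submanifold of $M/G$ and hence extends invariantly on a $G$-invariant neighborhood of each orbit in $J^{-1}(\nu)$. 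Your independence-of-extension argument then shows that the locally defined brackets agree on overlaps, so \eqref{ReducedPoissonBracket} still characterizes a single well-defined global bracket. With the statement read in that (standard) way, your proof is complete.
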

Note that, if $\rho$ is a $G$-invariant function on 
$M$ and $\rho_\nu$ is the function on $M_\nu$ such that 
$\rho_\nu\circ\pi_\nu=\rho_{|J^{-1}(\nu)}$, then the restriction to $J^{-1}(\nu)$ 
of $\Ha_\rho$ is tangent to $J^{-1}(\nu)$ and
\begin{equation}\label{HamiltonianRelatedPoisson}
 T_x\pi_\nu(\Ha_\rho(x))=\Ha_{\rho_\nu}(\pi_\nu(x)),
  \qquad\mbox{for all }x\in J^{-1}(\nu).
\end{equation}

The symplectic version of the Poisson reduction Theorem 
is the well-known Marsden-Weinstein reduction theorem. 
In this case, we will assume that the Poisson action is symplectic.

Let $\phi:G\times M\to M$ be an action of a Lie group $G$ on a symplectic 
manifold $(M,\Omega)$. The action $\phi$ is said to be 
\emph{symplectic} if $\phi_g:M\to M$ is a symplectic map for 
any $g\in G$.
\begin{thm}[\bf{Marsden-Weinstein reduction Theorem}, \cite{MarsWein}]\label{SymplecticReduction}
 Let $\phi:G\times M\to M$ be a free and proper symplectic 
action of a Lie group $G$ on a symplectic manifold $(M,\Omega)$. 
If $J:M\to\g^*$ is an $\Ad^*$-equivariant momentum map 
associated with $\phi$ and $\nu\in\g^*$, then 
$M_\nu=J^{-1}(\nu)/G_\nu$ is a symplectic manifold with 
symplectic $2$-form $\Omega_\nu$ characterized by
\begin{equation*}\label{ReducedSymplecticForm}
 \pi_\nu^*\Omega_\nu=i_\nu^*\Omega,
\end{equation*}
where $\pi_\nu:J^{-1}(\nu)\to M_\nu$ is the canonical projection 
and $i_\nu:J^{-1}(\nu)\hookrightarrow M$ is the canonical 
inclusion.
\end{thm}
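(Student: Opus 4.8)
The plan is to read the final statement as a direct application of the two reduction theorems already proved: Theorem~\ref{AVreductionTheorem} will furnish the reduced symplectic principal $\Rr$-bundle $\mu_\nu\colon(P_\nu,\Omega_\nu)\to(P_0)_\nu$, and the non-autonomous reduction theorem of Section~5 will furnish the reduced Hamiltonian section $h_\nu$. All the substance lies in verifying the hypotheses of those two theorems for the Newtonian data $(P,\mu,\Omega,h)$ equipped with the action $\phi$ and the candidate momentum map $J_u$; once the hypotheses hold, the two quotients $P_\nu=J_u^{-1}(\nu)/G_\nu$ and $(P_0)_\nu=(J_u^{P_0})^{-1}(\nu)/G_\nu$, together with the maps $\mu_\nu$ and $h_\nu$, are produced automatically.

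First I would collect what the set-up already provides: $\phi$ is a canonical action on $\mu\colon(P,\Omega)\to P_0$ (symplectic, commuting with the principal action $\psi$, and making $i_{Z_\mu}\Omega$ basic), and the induced action $\phi^{P_0}$ on $P_0$ is free and proper, after restricting $E$ to a $G$-invariant open subset if necessary. Two things then remain to be established: that $J_u$ is a well-defined, $\Ad^*$-equivariant momentum map for $\phi$ relative to $\Omega$, and that the Hamiltonian section $h$ is $G$-equivariant.

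The equivariance of $h$ I would settle through the criterion of Section~5, that $h$ is $G$-equivariant precisely when $F_h$ is $G$-invariant. Since $F_h[u,x,\alpha]=H_u(x,\alpha)$, I would evaluate $H_u$ along $\phi_{f_L}[u,x,\alpha]=[Lu,f_L(x),(L^{-1})^*\alpha]$ and invoke the three standing assumptions on $f_L$: the identity $(L^{-1})^*\alpha(Lu)=\alpha(u)$, the invariance $\varphi\circ f_L=\varphi$, and the intertwining $g'((L^{-1})^*\alpha)=L\,g'(\alpha)$, the last being a restatement of the fact that $L|_{V_0}$ preserves $g$. Together these collapse $H_{Lu}(f_L(x),(L^{-1})^*\alpha)$ back to $H_u(x,\alpha)$, proving invariance and hence equivariance.

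The momentum map is the heart of the matter, and the natural device is the frame symplectomorphism $\Theta_u\colon T^*E\to P$, $\Theta_u(x,\gamma)=[u,x,\gamma]$, which satisfies $\Theta_u^*\Omega=\Omega_E$ for the canonical form $\Omega_E$ on $T^*E$. Well-definedness of $J_u$ on the quotient $P$ follows from the cocycle identity $\sigma(u,u'')=\sigma(u,u')+\sigma(u',u'')$. Transported through $\Theta_u$, the function $J_u$ becomes the standard cotangent-lift momentum map $\gamma\mapsto\gamma(\xi_E(x))$, while $\phi$ becomes the cotangent lift of $f_L$ followed by a vertical translation by the constant $1$-form $m\sigma(Lu,u)$. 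The main obstacle is exactly this Galilean cocycle: its infinitesimal generator adds $m\,g(\lambda u)(\,\cdot-\tau(\cdot)u\,)$ to the fibre part of $\xi_P$, and a short calculation shows $J_u$ is a momentum map if and only if this term vanishes, i.e.\ $\lambda u=0$ for every generator. This is precisely where the hypothesis $\mathfrak g\subset Aff(E,V_0)$ enters: it forces the linear parts to annihilate the frame direction, killing the cocycle term and making $\phi$ a genuine cotangent lift through $\Theta_u$, so that $J_u$ is its canonical momentum map and is automatically $\Ad^*$-equivariant. With these hypotheses in place, the proposition on canonical actions supplies the induced Poisson action $\phi^{P_0}$ with its $\Ad^*$-equivariant momentum map $J_u^{P_0}$ satisfying $J_u^{P_0}\circ\mu=J_u$, and Theorem~\ref{AVreductionTheorem} together with the non-autonomous reduction theorem deliver $\mu_\nu\colon(P_\nu,\Omega_\nu)\to(P_0)_\nu$ and the reduced section $h_\nu$, as claimed.
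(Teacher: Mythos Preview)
Your proposal is addressing the wrong statement. The theorem you were asked to prove is the classical Marsden--Weinstein reduction theorem, stated in the paper's Appendix as a recalled result from \cite{MarsWein}: given a free proper symplectic $G$-action on $(M,\Omega)$ with an $\Ad^*$-equivariant momentum map $J$, the quotient $M_\nu=J^{-1}(\nu)/G_\nu$ carries a symplectic form $\Omega_\nu$ satisfying $\pi_\nu^*\Omega_\nu=i_\nu^*\Omega$. The paper does not prove this theorem; it simply cites it as background and uses it as an ingredient (for instance, inside the proof of Theorem~\ref{AVreductionTheorem}).

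What you have written instead is a proof sketch for the \emph{unnumbered theorem in Section~\ref{Section6}} about the frame-independent Newtonian system: you speak of $P$, $P_0$, $\Theta_u$, $J_u$, the Hamiltonian section $h$, the hypothesis $\mathfrak g\subset Aff(E,V_0)$, and you invoke Theorem~\ref{AVreductionTheorem} and the non-autonomous reduction theorem of Section~5. None of these objects appear in the Marsden--Weinstein statement, and your argument says nothing about why $\Omega_\nu$ exists or is nondegenerate on $J^{-1}(\nu)/G_\nu$ for a general symplectic manifold $(M,\Omega)$. Worse, your strategy would be circular: Theorem~\ref{AVreductionTheorem} itself relies on Marsden--Weinstein reduction, so you cannot use it to establish the latter. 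In short, there is no proof here of the stated theorem; you have matched your argument to a different result in the paper.
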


In fact, the Poisson structure associated with $\Omega_\nu$ 
is just the reduced Poisson structure obtained by Theorem 
\ref{PoissonReduction} (see \cite{MarsRat}).

Other interesting examples of Poisson manifolds are the  cosymplectic manifolds. For this type of structures,  Albert (\cite{Alb}) obtained 
a cosymplectic reduction Theorem. We recall that a \emph{cosymplectic structure} on a manifold $M^{2n+1}$ of odd dimension $2n+1$ is a couple $(\omega,\eta)$, where 
$\omega$ is a closed $2$-form on $M$ and $\eta$ is a 
closed $1$-form on $M$ such that $\eta\wedge\omega^n$ 
is a volume form. If $(\omega,\eta)$ is a cosymplectic structure on a manifold $M$ then there exists a unique vector field ${\mathcal R}$ on $M$, the Reeb vector field, satisfying the conditions $i_{\mathcal R}\omega=0$ and $i_{\mathcal R}\eta=1$. On the other hand, 
the Hamiltonian vector field $\Ha_\tau$ associated with a 
function $\tau:M\to\Rr$ is characterized by 
\begin{equation}\label{HamiltonianFieldCosymplNew}
 i_{\Ha_\tau}\omega=\d\tau-\Ree(\tau)\eta,\qquad\eta(\Ha_\tau)=0.
\end{equation}
An action $\phi:G\times M\to M$ of a Lie group $G$ on a cosymplectic 
manifold $(M,\omega,\eta)$ is said to be \emph{cosymplectic} 
if $\phi_g:M\to M$ preserves the cosymplectic structure, for any $g\in G$.

\begin{thm}[\bf{Cosymplectic reduction Theorem}, \cite{Alb}]\label{CosymplecticReduction}
Let $\phi:G\times M\to M$ be a free, proper and cosymplectic 
action of a Lie group $G$ on a cosymplectic manifold 
$(M,\omega,\eta)$. 
Suppose that $J:M\to\g^*$ is an $\Ad^*$-equivariant 
momentum map associated with $\phi$ such that 

$\Ree(J_\xi)=0$ for any $\xi\in\g$, 
where $\Ree$ is the Reeb vector field of $M$.

Then, for any $\nu\in\g^*$, $M_\nu=J^{-1}(\nu)/G_\nu$ is a 
cosymplectic manifold with cosymplectic structure 
$(\omega_\nu,\eta_\nu)$ characterized by
\begin{equation}\label{ReducedCosymplecticStruct}
 \pi_\nu^*\omega_\nu=i_\nu^*\omega, \qquad
   \pi_\nu^*\eta_\nu=i_\nu^*\eta,
\end{equation}
where $\pi_\nu:J^{-1}(\nu)\to M_\nu$ is the canonical projection 
and $i_\nu:J^{-1}(\nu)\hookrightarrow M$ is the canonical 
inclusion.

Moreover, the restriction $\Ree_{|J^{-1}(\nu)}$ of $\Ree$ is tangent to $J^{-1}(\nu)$ and 
$\pi_\nu$-projectable on the Reeb vector field $\Ree_\nu$ of $M_\nu$.
\end{thm}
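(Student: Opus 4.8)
The plan is to obtain the theorem as a direct application of two reduction results already proved: Theorem \ref{AVreductionTheorem}, which manufactures the reduced symplectic principal $\Rr$-bundle, and the reduction theorem for non-autonomous Hamiltonian systems (the one yielding \eqref{reduced_h} and \eqref{RedCosymplStruct}), which manufactures the reduced Hamiltonian section. The setup of this section already records that $\mu\colon(P,\Omega)\to P_0$ is a symplectic principal $\Rr$-bundle, that $\phi\colon G\times P\to P$ is a canonical action, and that $\phi^{P_0}\colon G\times P_0\to P_0$ is free and proper. Hence the proof reduces to two verifications: that $J_u$ is a well-defined, $\Ad^*$-equivariant momentum map for $\phi$, and that the Hamiltonian section $h$ is $G$-equivariant. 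Granting these, Theorem \ref{AVreductionTheorem} furnishes $\mu_\nu\colon(P_\nu,\Omega_\nu)\to(P_0)_\nu$ with $P_\nu=J_u^{-1}(\nu)/G_\nu$ and $(P_0)_\nu=(J_u^{P_0})^{-1}(\nu)/G_\nu$, and the non-autonomous reduction theorem then produces $h_\nu\colon(P_0)_\nu\to P_\nu$ through \eqref{reduced_h}.

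First I would check that $J_u$ descends to $P=(V_1\times E\times V^*)/\!\sim$. Replacing a representative $(w,x,\alpha)$ by an equivalent one $(w',x,\alpha+m\sigma(w,w'))$ changes $(J_u)_\xi=(\alpha-m\sigma(u,w))(\xi_E(x))$ by $m\big(\sigma(w,w')-\sigma(u,w')+\sigma(u,w)\big)(\xi_E(x))$. Here the hypothesis $\g\subset Aff(E,V_0)$ is decisive: it forces $\xi_E(x)\in V_0$, and on $V_0$ one has $\sigma(u,u')=g(u-u')|_{V_0}$, so the bracketed $1$-form, evaluated on $\xi_E(x)$, is the additivity defect of the bilinear $g$ applied to $(u-w)+(w-w')-(u-w')=0$, hence vanishes. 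Thus $J_u$ is well defined. To see that it is a momentum map and $\Ad^*$-equivariant, I would transport the picture to $T^*E$ via the symplectomorphism $\Theta_u$: since $\sigma(u,u)=0$, one gets $(J_u)_\xi\circ\Theta_u(x,\gamma)=\gamma(\xi_E(x))$, the canonical cotangent-lift momentum map \eqref{Jcotangent}, while $\Theta_u^{-1}\circ\phi_{f_L}\circ\Theta_u$ is the cotangent lift $(x,\gamma)\mapsto(f_L(x),(L^{-1})^*\gamma)$ corrected by the constant $1$-form $m\sigma(Lu,u)$. The identities $\xi_P=\Ha_{(J_u)_\xi}$ and $J_u\circ\phi_{f_L}=\Ad^*_{f_L^{-1}}\circ J_u$ then follow from the standard properties of cotangent lifts, the frame-dependent correction being controlled by $L^*\tau=\tau$ and the $g$-invariance of $L|_{V_0}$.

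Next, by the remark preceding Proposition \ref{CosymplecticAction}, $G$-equivariance of $h$ is equivalent to $G$-invariance of $F_h$. Since $F_h[w,x,\beta]=H_w(x,\beta)$, evaluating on $\phi_{f_L}[u,x,\alpha]=[Lu,f_L(x),(L^{-1})^*\alpha]$ gives
\[
 F_h(\phi_{f_L}[u,x,\alpha])=\big((L^{-1})^*\alpha\big)(Lu)+\frac{1}{2m}\big((L^{-1})^*\alpha\big)\big(g'((L^{-1})^*\alpha)\big)+\varphi(f_L(x)).
\]
The first term is $\alpha(L^{-1}Lu)=\alpha(u)$; the last is $\varphi(x)$ by $\varphi\circ f_L=\varphi$; and the middle term equals $\frac{1}{2m}\alpha(g'(\alpha))$ because $L^*\tau=\tau$ gives $L(V_0)=V_0$, so that $L|_{V_0}$ is a $g$-isometry and hence an isometry of the dual inner product $g^{-1}$ defining $g'=i\circ g^{-1}\circ i^*$. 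Therefore $F_h\circ\phi_{f_L}=F_h$, and $h$ is $G$-equivariant.

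With both verifications in hand, Theorem \ref{AVreductionTheorem} and the non-autonomous reduction theorem apply verbatim and yield the reduced symplectic principal $\Rr$-bundle $\mu_\nu\colon(P_\nu,\Omega_\nu)\to(P_0)_\nu$ together with the reduced Hamiltonian section $h_\nu$, as claimed. I expect the main obstacle to be the first verification: because $P$ is a quotient by the frame relation $\sim$ and $J_u$ is defined through a fixed frame $u$, both its well-definedness and the momentum-map property are genuinely frame-dependent, and it is precisely the structural hypothesis $\g\subset Aff(E,V_0)$, together with the cocycle behaviour of $\sigma$ on $V_0$, that makes them succeed.
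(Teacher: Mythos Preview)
Your proof addresses the wrong statement entirely. The theorem you are asked to prove is Albert's \emph{Cosymplectic reduction Theorem} (Theorem~\ref{CosymplecticReduction} in the Appendix): given a free, proper, cosymplectic $G$-action on a cosymplectic manifold $(M,\omega,\eta)$ with an $\Ad^*$-equivariant momentum map satisfying $\Ree(J_\xi)=0$, one must show that $M_\nu=J^{-1}(\nu)/G_\nu$ carries a cosymplectic structure $(\omega_\nu,\eta_\nu)$ characterized by $\pi_\nu^*\omega_\nu=i_\nu^*\omega$ and $\pi_\nu^*\eta_\nu=i_\nu^*\eta$, and that $\Ree$ descends to the Reeb field $\Ree_\nu$. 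Your write-up instead proves the theorem at the end of Section~\ref{Section6} about the frame-independent Newtonian system: you verify that $J_u$ is a well-defined $\Ad^*$-equivariant momentum map on $P=(V_1\times E\times V^*)/\!\sim$ and that $F_h$ is $G$-invariant, then invoke Theorem~\ref{AVreductionTheorem} and the non-autonomous reduction theorem. None of this touches the content of Theorem~\ref{CosymplecticReduction}; there is no cosymplectic manifold $(M,\omega,\eta)$ in your argument, no analysis of why $i_\nu^*\omega$ and $i_\nu^*\eta$ are $G_\nu$-basic, and no verification that the induced forms on the quotient satisfy the cosymplectic nondegeneracy condition $\eta_\nu\wedge\omega_\nu^n\neq 0$.

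Note also that the paper does not supply its own proof of Theorem~\ref{CosymplecticReduction}: it is quoted from \cite{Alb} as a background result in the Appendix. A genuine proof would proceed in the spirit of Marsden--Weinstein reduction: use $\Ree(J_\xi)=0$ to see that $\Ree$ is tangent to $J^{-1}(\nu)$, use $G$-invariance of $\omega$ and $\eta$ together with the momentum condition $i_{\xi_M}\omega=\d J_\xi$ (and $\eta(\xi_M)=0$, which follows from $\Ree(J_\xi)=0$) to show that $i_\nu^*\omega$ and $i_\nu^*\eta$ are $G_\nu$-basic, and then check nondegeneracy on the quotient via the identification $T_x J^{-1}(\nu)/T_x(G_\nu\!\cdot\! x)\cong T_{\pi_\nu(x)}M_\nu$.
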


Using \eqref{HamiltonianFieldCosymplNew} it's easy to show 
that, if $\rho$ is a $G$-invariant function on 
$M$ and $\rho_\nu$ is the function on $M_\nu$ such that 
$\rho_\nu\circ\pi_\nu=\rho_{|J^{-1}(\nu)}$, then the restriction to $J^{-1}(\nu)$ 
of $\Ha_\rho$ is tangent to $J^{-1}(\nu)$ and
\begin{equation}\label{HamiltonianRelatedCosympl}
 T_x\pi_\nu(\Ha_\rho(x))=\Ha_{\rho_\nu}(\pi_\nu(x)),
  \qquad\mbox{for all }x\in J^{-1}(\nu).
\end{equation}
This relation is formally the same of 
\eqref{HamiltonianRelatedPoisson}. This fact suggests 
that the Poisson bracket induced by the reduced 
cosymplectic structure is just the reduced Poisson bracket.
In fact, as in the symplectic case, we have the following result
\begin{prop}
 Under the same hypotheses as in Theorem \ref{CosymplecticReduction}, 
the Poisson bracket associated with $(\omega_\nu,\eta_\nu)$ 
is just the reduced Poisson bracket $\pbrEm_\nu$ deduced from Theorem 
\ref{PoissonReduction}.
\end{prop}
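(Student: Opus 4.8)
The plan is to evaluate both Poisson brackets on $M_\nu$ on an arbitrary pair of functions and show they coincide, reducing the whole statement to the relation \eqref{HamiltonianRelatedCosympl} and the characterization \eqref{ReducedCosymplecticStruct} of the reduced cosymplectic structure. Write $\pbrEm_\nu$ for the reduced Poisson bracket produced by Theorem \ref{PoissonReduction} from the Poisson structure on $M$ attached to the cosymplectic structure $(\omega,\eta)$ (this applies since a cosymplectic action is in particular a Poisson action), and $\pbrEm^c_\nu$ for the Poisson bracket on $M_\nu$ attached to the reduced cosymplectic structure $(\omega_\nu,\eta_\nu)$. Recall that on a cosymplectic manifold the Poisson bracket of two functions is $\pbr{f,g}=\omega(\Ha_f,\Ha_g)$, where $\Ha_f$ is given by \eqref{HamiltonianFieldCosymplNew}. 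The argument is the cosymplectic analogue of the proof of Proposition \ref{twoPoissStructEqual}.

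First I would fix $\rho_\nu,\tau_\nu\in C^\infty(M_\nu)$, choose $G$-invariant extensions $\rho,\tau\in C^\infty(M)$ of $\rho_\nu\circ\pi_\nu$ and $\tau_\nu\circ\pi_\nu$, and fix $x\in J^{-1}(\nu)$. By \eqref{HamiltonianRelatedCosympl}, the cosymplectic Hamiltonian vector fields $\Ha_\rho$ and $\Ha_\tau$ on $M$ are tangent to $J^{-1}(\nu)$ along that submanifold and $\pi_\nu$-project onto $\Ha_{\rho_\nu}$ and $\Ha_{\tau_\nu}$ (the Hamiltonian vector fields with respect to $(\omega_\nu,\eta_\nu)$). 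Then I would compute
\begin{equation*}
 \pbr{\rho_\nu,\tau_\nu}^c_\nu(\pi_\nu(x)) = \omega_\nu(\pi_\nu(x))\big(T_x\pi_\nu(\Ha_\rho(x)),T_x\pi_\nu(\Ha_\tau(x))\big) = (\pi_\nu^*\omega_\nu)(x)\big(\Ha_\rho(x),\Ha_\tau(x)\big),
\end{equation*}
and, using $\pi_\nu^*\omega_\nu=i_\nu^*\omega$ from \eqref{ReducedCosymplecticStruct} together with the tangency of $\Ha_\rho(x),\Ha_\tau(x)$ to $J^{-1}(\nu)$, rewrite the right-hand side as $\omega(x)\big(\Ha_\rho(x),\Ha_\tau(x)\big)=\pbr{\rho,\tau}(x)$, the Poisson bracket on $M$ induced by $(\omega,\eta)$. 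Finally Theorem \ref{PoissonReduction} identifies $\pbr{\rho,\tau}(x)$ with $\pbr{\rho_\nu,\tau_\nu}_\nu(\pi_\nu(x))$; since $x$ and $\rho_\nu,\tau_\nu$ are arbitrary, this yields $\pbrEm^c_\nu=\pbrEm_\nu$.

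This is essentially a diagram chase, so I do not expect a genuine obstacle; the one delicate point is the tangency of $\Ha_\rho,\Ha_\tau$ to $J^{-1}(\nu)$ and their $\pi_\nu$-projectability, which is exactly the content of \eqref{HamiltonianRelatedCosympl} and ultimately rests on the hypothesis $\Ree(J_\xi)=0$ for every $\xi\in\g$. I would also note, for the sake of rigour, that the $G_\nu$-invariance of $\rho_{|J^{-1}(\nu)}=\rho_\nu\circ\pi_\nu$ makes $T_x\pi_\nu(\Ha_\rho(x))$ depend only on $\rho_\nu$ and not on the chosen extension, so the displayed chain of equalities descends consistently to $M_\nu$ and does not require choosing the extensions coherently in $x$.
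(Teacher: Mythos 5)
Your proof is correct and follows essentially the same route as the paper: both evaluate the two brackets on arbitrary $\rho_\nu,\tau_\nu$ at a point of $J^{-1}(\nu)$ via $G$-invariant extensions and rely on the projection relation \eqref{HamiltonianRelatedCosympl} together with the characterizations \eqref{ReducedPoissonBracket} and \eqref{ReducedCosymplecticStruct}. The only cosmetic difference is that the paper expresses the cosymplectic bracket as $\Ha_\tau(\rho)$ and compares directional derivatives directly, whereas you write it as $\omega(\Ha_\rho,\Ha_\tau)$ and invoke the pullback identity $\pi_\nu^*\omega_\nu=i_\nu^*\omega$; these are equivalent by $\omega(\Ha_\rho,\Ha_\tau)=\Ha_\tau(\rho)$, which follows from \eqref{HamiltonianFieldCosymplNew}.
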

\begin{proof}
Denote by $\pbrEm_\nu$ (resp.~$\pbrEm'_\nu$) the Poisson bracket on $M_\nu$ 
obtained from Theorem \ref{PoissonReduction} by reducing the Poisson bracket 
$\pbrEm$ on $M$ (resp.~induced by the reduced cosymplectic structure 
$(\omega_\nu,\eta_\nu)$). Let $\rho_\nu,\tau_\nu\in C^\infty(M_\nu)$ 
and $\rho,\tau$ be arbitrary $G$-invariant extensions of 
$\rho_\nu\circ\pi_\nu, \tau_\nu\circ\pi_\nu$ respectively. Then, for any 
$x\in J^{-1}(\nu)$, using \eqref{ReducedPoissonBracket} and \eqref{HamiltonianRelatedCosympl}
 we have that
\[
  \pbr{\rho_\nu,\tau_\nu}_\nu(\pi_\nu(x))  
   = \pbr{\rho,\tau}(x) = \Ha_\tau(x)(\rho)= \Ha_{\tau_\nu}(\pi_\nu(x))(\rho_\nu)= \pbr{\rho_\nu,\tau_\nu}'_\nu(\pi_\nu(x)).
\]
Since $\pi_\nu$ is surjective, $\pbrEm_\nu=\pbrEm'_\nu$.
\end{proof}


\begin{thebibliography}{99}

\bibitem{AbrMarsd}
R.~Abraham and J.E.~Marsden. 
\emph{Foundations of Mechanics.} Benjamin/Cummings Publishing Co. Inc. Advanced Book Program, Reading, Mass., 1978. Second edition, revised and enlarged, with the assistance of Tudor Ratiu and Richard Cushman

\bibitem{Alb}
C.~Albert, Le th\'eor\`eme de r\'eduction de {M}arsden-{W}einstein en g\'eom\'etrie cosymplectique et de contact. \emph{J. Geom. Phys.}, 6(4):627--649, 1989


\bibitem{ACG}
J.M.~Arms, R.~Cushman, and M.J.~Gotay. 
A universal reduction procedure for Hamiltonian group actions, in: T.~S.~Ratiu (ed.), \emph{The Geometry of Hamiltonian Systems}, Springer-Verlag, New York, 1991, pp. 33--51.

\bibitem{Ar}
V.I.~Arnold. 
\emph{Mathematics Methods of Classical Mechanics.} Graduate Texts in Mathematics. vol. 60, Spinger-Verlag, New York, 1989.

\bibitem{BaLe}
L.~Bates and E.~Lerman.
Proper group actions and symplectic stratified spaces, \emph{Pacific J. Math.}, 181 (2): 201--229, 1997.

\bibitem{BoKoFo}
A.V.~ Bolsinov, V.V.~ Kozlov and A.T.~ Fomenko.
The Maupertuis principle and geodesic flows on the sphere arising from integrable cases in the dynamics of a rigid body 
 \emph{Russ. Math. Surv.},  50 (3): 473--501, 1995.

\bibitem{CaLuRa}
J.F.~Cari\~nena, J.~de Lucas, and M.F.~Ra\~nada. 
Lie systems and integrability conditions for $t$-dependent frequency harmonic oscillators. \emph{Int. J. Geom. Math. Modern Physics}, 7(2):289--310 2010.


\bibitem{DiTKo}
L.~Di Terlizzi and J.J.~Konderak. 
Reduction theorems for manifolds with degenerate 2-form. \emph{J. Lie Theory}, 17(3): 563--581, 2007.

\bibitem{Ge}
H.~Geiges. Constructions of contact manifolds.
\emph{Math. Proc. Cambridge Philos. Soc.},
121(3):455--464, 1997.

\bibitem{GrabGrabUrbAV}
K.~Grabowska, J.~Grabowski and P.~Urba{\'n}ski.
A{V}-differential geometry: {P}oisson and {J}acobi structures. \emph{J. Geom. Phys.}, 52(4):398--446, 2004.

\bibitem{GrGrUrbEL}
K.~Grabowska, J.~Grabowski and P.~Urba{\'n}ski. 
A{V}-differential geometry: {E}uler-{L}agrange equations. 
\emph{J. Geom. Phys.}, 57(10):1984--1998, 2007.

\bibitem{GrUrVariational}
K.~Grabowska and P.~Urba{\'n}ski.
A{V}-differential geometry and calculus of variations. 
In \emph{X{V} {I}nternational {W}orkshop on {G}eometry and {P}hysics}, volume 11 of \emph{Publ. R. Soc. Mat. Esp.}, pages 144--152. R. Soc. Mat. Esp., Madrid, 2007.

\bibitem{GrUr}
K.~Grabowska and P.~Urba{\'n}ski.
A{V}-differential geometry and {N}ewtonian mechanics. 
\emph{Rep. Math. Phys.}, 58(1):21--40, 2006.


\bibitem{GuSt}
V.~Guillemin, E.~Lerman and S.~Sternberg.
\emph{Symplectic fibrations and multiplicity diagrams}. Cambridge University Press, Cambridge, 1996.

\bibitem{HiKaLiRo}
N.J.~Hitchin, A.~Karlhede, U.~Lindstr{\"o}m and M.~Ro{\v{c}}ek.
Hyper-{K}\"ahler metrics and supersymmetry.
\emph{Comm. Math. Phys.}, 108(4):535--589, 1987.

\bibitem{IgMaPaSo}
D.~Iglesias, J.C.~Marrero, E.~Padr{\'o}n and D.~Sosa.
Lagrangian submanifolds and dynamics on {L}ie affgebroids. 
\emph{Rep. Math. Phys.}, 57(3):385-436, 2006.

\bibitem{Ku}
M.~Kummer. 
On the construction of the reduced phase space of a {H}amiltonian system with symmetry. 
\emph{Indiana Univ. Math. J.}, 30(2):281--291, 1981.

\bibitem{LM}
P.~Libermann and Ch-M.~Marle. 
\emph{Symplectic Geometry and Analytical Mechanics,} Riedel, Dordrecht, 1987. 


\bibitem{LeMaMa}
M.~de Le{\'o}n, J.~Mar{\'{\i}}n-Solano and J.C.~Marrero. The constraint algorithm in the jet formalism. \emph{Differential Geom. Appl.}, 6(3):275--300, 1996.

\bibitem{LeRaSiMa}
D.~Lewis, T.~Ratiu, J.C.~Sim\'o and J.E.~Marsden. The heavy top: a geometric treatment. \emph{ Nonlinearity}, 5: 1--48, 1992.

\bibitem{MarsEtAl}
J.E.~Marsden, G.~Misio{\l}ek, J.P.~Ortega, M.~Perlmutter and T.~Ratiu. 
\emph{Hamiltonian reduction by stages}, volume 1913 of \emph{Lecture Notes in Mathematics}, Springer, Berlin 2007.

\bibitem{MaMoRa}
J.~Marsden, R.~Montgomery and T.~Ratiu. 
Reduction, symmetry, and phases in mechanics. 
\emph{Mem. Amer. Math. Soc.}, 88(436):iv+110, 1990.

\bibitem{MarsRat}
J.E.~Marsden and T.~Ratiu.
Reduction of {P}oisson manifolds.
\emph{Lett. Math. Phys.}, 11(2):161--169, 1986.

\bibitem{MarsWein}
J.E.~Marsden and A.~Weinstein.
Reduction of symplectic manifolds with symmetry. 
\emph{Rep. Mathematical Phys.}, 5(1):121--130, 1974.

\bibitem{OrRa98}
J.P~Ortega and T.~Ratiu.
Singular Reduction of Poisson Manifolds, 
\emph{Lett. Math. Phys.}, 46:359--372, 1998.

\bibitem{OrRa}
J.P~Ortega and T.~Ratiu.
\emph{Momentum maps and {H}amiltonian reduction}, 
volume 222 of \emph{Progress in Mathematics}. Birkh\"auser Boston Inc., Boston, MA, 2004.

\bibitem{PeRoSo}
M.~Perlmutter, M.~ Rodr{\'\i}guez-Olmos and M.E.~Sousa-Dias.
On the geometry of reduction cotangent bundles at zero momentum, 
\emph{J. Geom. Phys.}, 57: 571--596, 2007.

\bibitem{Vais}
I.~Vaisman. 
\emph{Lectures on the geometry of {P}oisson manifolds},
volume 118 of \emph{Progress in Mathematics}. Birkh\"auser Verlag, Basel, 1994.
\end{thebibliography}
\end{document}